\def\rit{\mathbb{R}}
\def\zit{\mathbb{Z}}   
\def\nit{\mathbb{N}}
\def\ppit{\mathbb{P}} 
\def\qit{\mathbb{Q}} 
\def\cit{\mathbb{C}} 
\def\fit{\mathbb{F}}
\newcommand{\pf}{{\em Proof.~}}
\newcommand{\qed}{\hfill~~\mbox{$\Box$}}
\newenvironment{proof}{\smallskip \noindent \pf}{\qed \bigskip}
\newtheorem{theorem}{Theorem}[section]
\newtheorem{proposition}[theorem]{Proposition}
\newtheorem{definition}[theorem]{Definition}
\newtheorem{lemma}[theorem]{Lemma}
\newtheorem{corollary}[theorem]{Corollary}
\newtheorem{remark}[theorem]{Remark}
\newtheorem{example}[theorem]{Example}
\DeclareMathOperator{\card}{Card}
\DeclareMathOperator{\Inter}{Int}
\DeclareMathOperator{\vol}{vol}
\DeclareMathOperator{\boite}{Box}
\DeclareMathOperator{\Spec}{Spec}
\DeclareMathOperator{\alg}{alg}
\DeclareMathOperator{\st}{st}
\DeclareMathOperator{\geo}{geo}
\DeclareMathOperator{\Ehr}{Ehr}
\DeclareMathOperator{\orb}{orb}
\DeclareMathOperator{\Gr}{Gr}
\begin{document}

\title{\bf Global spectra, polytopes and stacky invariants}
\author{\sc Antoine Douai \thanks{Partially supported by the grant ANR-13-IS01-0001-01 of the Agence nationale de la recherche.
Mathematics Subject Classification 32S40, 14J33, 34M35, 14C40.
Key words and phrases: Mirror symmetry, toric varieties, polytopes, orbifold cohomology, spectrum of regular tame functions.}\\
Universit\'e C\^ote d'Azur, CNRS, LJAD, France\\
Parc Valrose, F-06108 Nice Cedex 2, France \\
Email address: Antoine.DOUAI@unice.fr}

\maketitle

\begin{abstract} 
Given a convex polytope, we define its geometric spectrum, a stacky version of Batyrev's stringy E-functions,
and we prove a stacky version of a formula of Libgober and Wood about the E-polynomial of a smooth projective variety.
As an application, we get 
a closed formula for the variance of the geometric spectrum and
a Noether's formula for two dimensional Fano polytopes (polytopes whose vertices 
are primitive lattice points; a Fano polytope is not necessarily smooth). 
We also show that this geometric spectrum is equal to the algebraic spectrum
(the spectrum at infinity of a tame Laurent polynomial whose Newton polytope is the polytope alluded to).
This gives an explanation and some positive answers to 
Hertling's conjecture about the variance of the spectrum of tame regular functions.
\end{abstract}

%\tableofcontents

\section{Introduction}

Let $X$ be a smooth projective variety of dimension $n$ with Hodge numbers $h^{p,q} (X)$. 
Using Hirzebruch-Riemann-Roch theorem, Libgober and Wood show 
in \cite{LW} the formula
\begin{equation}\label{eq:LWIntro}
\frac{d^2}{du^2} E (X; u,1)_{|u=1}=\frac{n(3n-5)}{12}c_n (X)+\frac{1}{6} c_1 (X) c_{n-1}(X)
\end{equation}
where $E( X; u,v)=\sum_{p,q} (-1)^{p+q}h^{p,q} (X) u^p v^q$ is the Hodge-Deligne polynomial of $X$.
By duality, it follows that
\begin{equation}\label{eq:VarianceDegreCohomologie}
\sum_{p,q} (-1)^{p+q} h^{p,q}(X) (p-\frac{n}{2})^2 =\frac{n}{12}c_n (X)+\frac{1}{6} c_1 (X) c_{n-1}(X).
\end{equation}
If $X$ is a $n$-dimensional projective variety with at most log-terminal singularities (we will focus in this paper on the toric case), 
Batyrev proves in \cite{Batyrev3} the ``stringy`` version of formula (\ref{eq:LWIntro})
\begin{equation}\label{eq:VarStringyIntro}
\frac{d^2}{du^2} E_{\st} (X; u,1)_{|u=1}=\frac{n(3n-5)}{12}e_{\st} (X)+\frac{1}{6} c_{\st}^{1,n}(X)
\end{equation}
where $E_{\st}$ is the stringy $E$-function of $X$, $e_{\st}$ is the stringy Euler number and 
$c_{\st}^{1,n}(X)$ is a stringy version of $c_1 (X) c_{n-1}(X)$.

On the singularity theory side (the B-side), the expected mirror partners of toric varieties are the Givental-Hori-Vafa models (see \cite{Giv}, \cite{HV}), in general a 
class of Laurent polynomials. One associates to such functions their {\em spectrum at infinity}, namely a sequence
$\alpha_{1},\cdots ,\alpha_{\mu}$ of rational numbers, suitable logarithms of the eigenvalues of the monodromy at infinity 
of the function involved (see \cite{Sab1}; the main features are recalled
in Section \ref{sec:SpecAlgebriquePolytope}). 
A specification of mirror symmetry is that the spectrum at infinity of a given Givental-Hori-Vafa model is related to the degrees of the 
(orbifold) cohomology groups of its mirror variety (orbifold). So one can expect a formula similar to
(\ref{eq:VarianceDegreCohomologie}) involving the spectrum at infinity of any regular function: the aim of this text 
is to look for such a counterpart. 

The key observation is that the spectrum at infinity of a Laurent polynomial can be described (under a tameness condition due to Kouchnirenko \cite{K}, see Section \ref{sec:SpecAlgebriquePolytope}) with the help of the Newton filtration of its Newton polytope. 
Since a polytope determines a stacky fan in the sense of \cite{BCS}, we are led to show a ``stacky'' version of formula (\ref{eq:LWIntro}). Given a Laurent polynomial $f$ with Newton polytope $P$, with global Milnor number $\mu$ (the number of critical points with multiplicities) and with spectrum at infinity $\alpha_1 ,\cdots ,\alpha_{\mu}$, the program is thus as follows:
\begin{itemize}
\item to construct a stacky version of the $E$-polynomial, the {\em geometric spectrum} of $P$: we define
$\Spec_P^{\geo} (z):=(z-1)^{n} \sum_{v\in N} z^{-\nu (v)}$ where $\nu$ is the Newton function of the polytope $P$, see
Section \ref{sec:SpecGeometriquePolytope}.
This geometric spectrum is closely related 
to the Ehrhart series and to the $\delta$-vector of the polytope $P$, more precisely to their twisted versions studied by 
 Stapledon \cite{Stapledon} and Musta\c{t}\u{a}-Payne \cite{MustataPayne}; it is also an orbifold Poincar\'e series (see Corollary \ref{coro:SpecGeoOrbifold}), thanks to the description of the orbifold cohomology given by Borisov, Chen and Smith \cite[Proposition 4.7]{BCS}, 
\item to show that this geometric spectrum is equal to the (generating function of the) spectrum at infinity of $f$,
and this is done by showing that both functions are Hilbert-Poincar\'e series of isomorphic graded rings (see Corollary \ref{coro:SpecGeoegalSpecAalg}):  
this gives the identification between the spectrum at infinity and the orbifold degrees, 
\item to show a ``stacky'' version of  (\ref{eq:LWIntro}), namely
\begin{equation}\nonumber
\frac{d^2}{dz^2} \Spec_P^{\geo} (z)_{|z=1}=\frac{n(3n-5)}{12}\mu_P +\frac{1}{6} \widehat{\mu}_P
\end{equation}
where $\mu_P$ is the normalized volume of $P$ (see equation (\ref{eq:MuVol})) and $\widehat{\mu}_P$ is a linear combination of intersection numbers, see Theorem \ref{theo:VarianceSpectreGeometrique}. 
\end{itemize}
 At the end, given a tame Laurent polynomial $f$, we get in Theorem \ref{theo:VarianceMiroirToriqueChampetre} the formula
\begin{equation}\label{eq:VarianceIntro}
\sum_{i=1}^{\mu}  (\alpha_i -\frac{n}{2})^2 =\frac{n}{12}\mu_P +\frac{1}{6}\widehat{\mu}_P 
\end{equation}
where $P$ is the Newton polytope of $f$ and $\alpha_1 ,\cdots ,\alpha_{\mu}$ is the spectrum at infinity of $f$.

In order to enlighten formula (\ref{eq:VarianceIntro}), assume that
$N=\zit^{2}$ and that $P$ is a full dimensional reflexive lattice polytope in $N_{\rit}$. Then we have the following well-known Noether's formula
\begin{equation}\label{eq:Noether}
12 =\mu_P +\mu_{P^{\circ}} 
\end{equation}
where $P^{\circ}$ is the polar polytope of $P$ and $\mu_{P}$ (resp. $\mu_{P^{\circ}}$) is the normalized volume of $P$ 
(resp. $P^{\circ}$). 
We show in Section \ref{sec:NoetherFanoPolytope} that $\widehat{\mu}_{P}=\mu_{P^{\circ}}$ if $P$ is a Fano polytope 
(that is if its vertices are primitive lattice points).
From formula (\ref{eq:VarianceIntro}), we then get 
\begin{equation}\label{eq:VarianceIntroDim2}
\sum_{i=1}^{\mu}  (\alpha_i -1)^2 =\frac{1}{6}\mu_P +\frac{1}{6}\mu_{P^{\circ}} 
\end{equation}
which is a generalization of formula (\ref{eq:Noether}): indeed, a reflexive polytope $P$ is Fano and its geometric spectrum satisfies $\sum_{i=1}^{\mu}  (\alpha_i -1)^2 =2$ (after a preprint version of this paper was written \cite{D0},
I have been informed that an analogous result was proposed independently by Batyrev and Schaller in \cite{BS}).

Last, notice that, because the mean value of $\alpha_1 ,\cdots , \alpha_{\mu}$ is $\frac{n}{2}$, we can use formula 
(\ref{eq:VarianceIntro}) in order to compute the variance 
$\frac{1}{\mu_P} \sum_{i=1}^{\mu}  (\alpha_i -\frac{n}{2})^2 $ of the spectrum at infinity of a Laurent polynomial $f$. For instance, assume that $\widehat{\mu}_P\geq 0$: it follows from equation (\ref{eq:VarianceIntro}) that
\begin{equation}\label{eq:ConjIntro}
\frac{1}{\mu_P}\sum_{i=1}^{\mu}(\alpha_i -\frac{n}{2})^2 \geq \frac{\alpha_{\max}-\alpha_{\min}}{12}
\end{equation}
where $\alpha_{\max}$ (resp. $\alpha_{\min}$) denotes the maximal (resp. minimal) spectral value of $f$ (indeed,  if $f$ is a Laurent polynomial we have $\alpha_{\max}-\alpha_{\min} =n$). This inequality is expected to be true for any tame regular function: this is the global version of Hertling's 
conjecture about the variance of the spectrum, see Section \ref{sec:Conj}. 
For instance, formula (\ref{eq:VarianceIntroDim2}) shows that this will be the case in the two dimensional case if the Newton polytope of $f$ is 
Fano.

This paper is organized as follows: in Section \ref{sec:PolytopeVarTor}  
we recall the basic facts on polytopes and toric varieties 
that we will use.
In Section \ref{sec:CahierChargesSpectre}, we define the spectrum of a 
polytope. The geometric spectrum is defined in Section \ref{sec:SpecGeometriquePolytope} 
and the algebraic spectrum is defined in Section \ref{sec:SpecAlgebriquePolytope}: both are compared
in Section \ref{sec:Comparaison}. 
The previous results are used in Section \ref{sec:ConjectureVarianceSpectre} in order to get formula 
(\ref{eq:VarianceIntro}). We show Noether's formula (\ref{eq:VarianceIntroDim2}) for Fano polytopes in Section \ref{sec:NoetherFanoPolytope}. Last, we use our results
in order to motivate (and to prove in some cases) the 
conjecture about the variance of the spectrum at infinity of a regular function in Section \ref{sec:Conj}. 

This text owes much to Batyrev's work \cite{Batyrev2}, \cite{Batyrev3}. The starting point was \cite[Remark 3.13]{Batyrev2} and its close resemblance with
Hertling's conjecture about the variance of the spectrum of an isolated singularity \cite{Her}: this link is previously alluded to in \cite{Her1}.

\section{Polytopes and toric varieties (framework)}

\label{sec:PolytopeVarTor}

%We give in this section an overview of the results that we will use.

\subsection{Polytopes and reflexive polytopes}

Let $N$ be the lattice $\zit^n$, let $M$ be its dual lattice and let $\langle \ ,\ \rangle$ be the pairing between $N_{\rit}=N\otimes_{\zit}\rit$ and $M_{\rit}=M\otimes_{\zit}\rit$.
A full dimensional lattice polytope $P\subset N_{\rit}$ is
the convex hull of a finite set of $N$ such that $\dim P=n$. 
If $P$ is a full dimensional lattice polytope containing the origin in its interior,
there exists, for each facet (face of dimension $n-1$) $F$ of $P$, $u_F \in M_{\qit}$ such that 
\begin{equation}\label{eq:PresentationFacette}
P\subset \{n\in N_{\rit},\ \langle u_F ,n\rangle \leq 1 \}\ 
\mbox{and}\ F=P\cap \{n\in N_{\rit},\ \langle u_F ,n\rangle = 1 \}.
\end{equation}
This gives the hyperplane presentation  
\begin{equation}\label{eq:PresentationPolytope}
P=\cap_F \{n\in N_{\rit},\ \langle u_F ,n\rangle \leq 1 \}.
\end{equation}
We define, for $v\in N_{\rit}$, $\nu_F (v):=\langle u_F ,v\rangle $ and 
$\nu (v):=\max_{F}\nu_{F}(v)$ where the maximum is taken over the facets of $P$.

\begin{definition}\label{def:FonctionSupport}
The function $\nu : N_{\rit}\rightarrow \rit$ is the Newton function of $P$.
\end{definition}

 Let $P$ be a full dimensional lattice polytope in $N_{\rit}$  containing the origin. The polytope
\begin{equation}\nonumber
P^{\circ}=\{m\in M_{\rit},\ \langle m,n \rangle\leq 1\ \mbox{for all}\ n\in P\}
\end{equation}
is the {\em polar polytope} of $P$.
The vertices of $P^{\circ}$ are in correspondence with the facets of $P$ {\em via} 
\begin{equation}\label{eq:CorrVerticePolar}
u_{F}\ \mbox{vertex of}\ P^{\circ} \leftrightarrow \  F =P\cap \{x\in N_{\rit}, \langle u_{F}, x\rangle =1\}. 
\end{equation}
A lattice  polytope $P$ is {\em reflexive} if it  
contains the origin and if
$P^{\circ}$ is a lattice polytope.

All the polytopes considered in this paper are full dimensional lattice polytopes containing the origin in their interior $\Inter P$.
For such a polytope $P$ we define its {\em normalized volume}
\begin{equation}\label{eq:MuVol}
\mu_P :=n! \vol (P)
\end{equation}
where the volume $\vol (P)$ is normalized such that the volume of the unit cube is equal to $1$.

\subsection{Ehrhart polynomial and Ehrhart series}
\label{sec:Ehrhart}

Let $Q$ be a full dimensional lattice polytope in $N_{\rit}$.
The function
$\ell \mapsto \Ehr_Q (\ell ):= \card ( (\ell Q )\cap N)$, $\ell\in\nit$,
is a  polynomial of degree $n:=\dim N_{\rit}$. This is the   
{\em Ehrhart polynomial} of $Q$.   
We have
\begin{equation}\label{eq:serie Ehrhart}
\sum_{m\geq 0}\Ehr_Q (m) z^m =\frac{\delta_0 +\delta_1 z +\cdots +\delta_n z^n}{(1-z)^{n+1}}
\end{equation}
where the $\delta_j$'s are non-negative integers \cite[Theorem 3.12]{BeckRobbins}: 
the generating function
\begin{equation}\nonumber 
F_Q (z):= \sum_{m\geq 0}\Ehr_Q (m) z^m
\end{equation} 
is the {\em Ehrhart series} of $Q$ and the vector
\begin{equation}\label{eq:deltavecteur}
\delta :=( \delta_0 ,\cdots ,\delta_n )\in\nit^{n+1}
\end{equation}
is the {\em $\delta$-vector} of $Q$.
%Moreover,
%\begin{equation}\nonumber
%\delta_0 =1,\  \delta_1 =\card (Q\cap N)-(n+1),\ \delta_n =\card (\Inter(Q)\cap N) 
%\end{equation}
%and
%\begin{equation}\nonumber
%\delta_0 +\cdots +\delta_n =n! \vol (Q)
%\end{equation}
%see \cite[Chapter 3]{BeckRobbins}. 
The $\delta$-vector gives a characterization of reflexive polytopes:
the polytope $Q$ is reflexive if and only if $\delta_i =\delta_{n-i}$ for $i=0,\cdots ,n$, see for instance \cite[Theorem 4.6]{BeckRobbins}.

\subsection{Toric varieties}
\label{sec:VarPolFano}

Let $\Delta$ be a fan in $\nit_{\rit}$ and let $\Delta (i)$ be the set of its cones of dimension $i$.   
The {\em rays} of $\Delta$ are its one-dimensional cones. 
Let $X := X_{\Delta}$ be the toric variety of the fan 
$\Delta$: $X$ is {\em simplicial} if each cone of $\Delta$ is generated by independent vectors of $N_{\rit}$, {\em complete} if the support of its fan (the union of its cones) is $N_{\rit}$.

A full dimensional lattice polytope $Q$ in $M_{\rit}$ yields a toric variety $X_Q$, associated with the normal fan $\Sigma_Q$ of $Q$. Alternatively, if $P\subset N_{\rit}$ is a full dimensional lattice polytope containing 
the origin in its interior we get a complete fan $\Delta_P$ in $N_{\rit}$ by taking the cones over the proper faces of $P$ and we 
will denote by 
$X_{\Delta_P}$ the associated toric variety. 
 Both constructions are dual, see for instance \cite[Exercise 2.3.4]{CLS}.
%: if $P^{\circ}$ is the polar polytope of the polytope $P$ in $N_{\rit}$ then $\Delta_P$ is the normal fan of $\ell P^{\circ}$ where $\ell$ is an %i%nteger such tha $\ell P^{\circ}$ 
%is a lattice polytope and
%$X_{\Delta_P}=X_{\ell P^{\circ}}$.

Recall that a projective normal toric variety $X$  is {\em Fano} (resp. {\em weak Fano})
if the anticanonical divisor  $-K_X$ is $\qit$-Cartier and ample
(resp. nef and big). See \cite[Theorem 6.1.7]{CLS} for a characterization of  weak Fano toric varieties.
We will say that a full dimensional lattice polytope $P$ containing the origin in its interior is {\em Fano} if its vertices are primitive lattice points of $N$, {\em smooth Fano} if each of its facets has exactly $n$ vertices forming a basis of the lattice $N$.
It should be emphasized that a Fano polytope is not necessarily smooth.

{\em Otherwise stated, all toric varieties that we will consider are complete and simplicial.} 

\subsection{Stacky fans and orbifold cohomology}
\label{sec:EventailChampetre}

Let $\Delta$ be a complete simplicial fan and
let $\rho_1 ,\cdots ,\rho_r$ be its rays, generated respectively by the primitive vectors $v_1 ,\cdots ,v_r$ of $N$. Choose
$b_1 ,\cdots , b_r \in N$ whose images in $N_{\qit}$ generate the rays $\rho_1 ,\cdots ,\rho_r$: the data 
$\mathbf{\Delta} =(N, \Delta ,\{b_i\})$ is a  {\em stacky fan}, see \cite{BCS}.
In particular,
let $P$ be a lattice polytope containing the origin such that $\Delta :=\Delta_P$ is simplicial: there are  
$a_i$ such that $b_i :=a_i v_i \in \partial P\cap N$ and we will call the stacky fan 
$\mathbf{\Delta} =(N, \Delta ,\{b_i\})$ the {\em stacky fan of $P$}. 
One associates to this stacky fan a (separated) Deligne-Mumford stack ${\cal X}(\mathbf{\Delta})$, see \cite[Proposition 3.2]{BCS}.
We will denote by $H_{\orb}^{*} ( {\cal X}(\mathbf{\Delta}), \qit )$
its orbifold cohomology (with rational coefficients) and by 
$A_{\orb}^* ( {\cal X}(\mathbf{\Delta}))(=H_{\orb}^{2*} ( {\cal X}(\mathbf{\Delta}), \qit ))$ its orbifold Chow ring (with rational coefficients).

In this situation, we define, for a cone $\sigma\in \Delta$, 
\begin{itemize}
\item $N_{\sigma}$ the subgroup generated by $b_i$, $\rho_i \subseteq\sigma$,
\item $N(\sigma )=N/ N_{\sigma}$,
\item the fan $\Delta/ \sigma$ in $N(\sigma )_{\qit}$: this is the set 
$\{\tilde{\tau}=\tau +(N_{\sigma})_{\qit}, \ \sigma\subseteq \tau, \tau\in\Delta \}$,
\item $\boite (\sigma):=\{\sum_{\rho_{i}\subseteq\sigma}\lambda_i b_i,\ \lambda_i \in ]0,1[\}$.
\end{itemize}
By \cite[Proposition 4.7]{BCS} we have
\begin{equation}\label{eq:OrbiCohDecomposition}
 H_{\orb}^{2i} ({\cal X}(\mathbf{\Delta}), \qit )=\oplus_{\sigma\in\Delta }\oplus_{v\in \boite (\sigma)\cap N }H^{2(i-\nu (v))}(X_{\Delta/ \sigma }, \qit )
\end{equation}
where $\nu$ is the Newton function of $P$ (see Definition \ref{def:FonctionSupport}).

\subsection{Batyrev's stringy functions}
\label{sec:FonctionsFiliformes}

Let $X_{\Delta}$ be a normal $\qit$-Gorenstein toric variety and let
$\rho :Y\rightarrow X_{\Delta}$
be a toric resolution defined by a refinement $\Delta'$ of $\Delta$, see for instance \cite[Proposition 11.2.4]{CLS}. 
The irreducible components of the exceptional divisor of 
$\rho$ are in one-to-one correspondence with the primitive generators $v_{1}' ,\cdots ,v_{q}'$ of the rays of $\Delta' (1)$
of $Y$ that do not belong to $\Delta (1)$ and in the formula 
\begin{equation}\label{eq:ResolutionDiviseurCanonique}
 K_{Y}=\rho^{*}K_{X_{\Delta}}+\sum_{i=1}^{q}a_{i}D_{i}
\end{equation}
we have $a_{i}=\varphi (v'_{i})-1$ where $\varphi$ is the support function of the divisor $K_{X_{\Delta}}$, 
see \cite[Lemma 11.4.10]{CLS}.
In our toric situation we have 
$a_{i}>-1$ because $\varphi (v'_{i})>0$.

The $E$-polynomial of  a smooth variety  
$X$ is defined by
\begin{equation}\label{eq:Epolynome}
E (X, u,v):=\sum_{p,q=0}^n (-1)^{p+q} h^{p,q} (X) u^p v^q
\end{equation}
where the $h^{p,q}(X)$'s are the Hodge numbers of $X$. It is possible 
to extend this definition to singular spaces having log-terminal singularities
(and to get {\em stringy} invariants that extend topological invariants of smooth varieties) as follows:
let $\rho : Y\rightarrow X$ be a resolution of $X:=X_{\Delta}$ as above,
$I'=\{1,\cdots ,q\}$ and put, for any subset $J\subset I'$,
\begin{equation}\nonumber
D_J := \cap_{j\in J}D_j\ \mbox{if}\ J\neq \emptyset ,\ D_J := Y \ \mbox{if}\ J= \emptyset\ \mbox{and}\ D_J^{\circ}=D_J - \bigcup_{j\in I'-J}D_j.
\end{equation} 
The following definition is due to Batyrev \cite{Batyrev2} 
(we assume that the product over $\emptyset$ is $1$; recall that $a_i >-1$):

\begin{definition}\label{def:StringyFunction}
Let $X$ be a toric variety. 
The function 
\begin{equation}\label{eq:DefStringyFunction}
E_{\st}(X, u,v):=\sum_{J\subset I'}E( D_J^{\circ}, u,v)\prod_{j\in J}\frac{uv-1}{(uv)^{a_j +1}
-1}
\end{equation}
is the stringy $E$-function of $X$. The number
\begin{equation}\label{def:StringyeulerNumber}
e_{\st}(X):=\lim_{u,v\rightarrow 1}E_{\st}(X, u,v)
\end{equation}
is the stringy Euler number. 
\end{definition}

\noindent The stringy $E$-function can be defined using motivic integrals, see \cite{Batyrev2} and
\cite{Veys}. 
By \cite[Theorem 3.4]{Batyrev2}, $E_{\st}(X,u ,v)$ does not depend on the resolution.
In our setting, $E_{\st}$ depends only on the variable $z:=uv$, and we will write $E_{\st}(X, z)$ instead of $E_{\st}(X, u,v)$. 

In Section \ref{sec:InterpretationResolutionSing} we will use a modified version of the stringy $E$-function in order to compute the geometric spectrum of a polytope.

\section{The spectrum of a polytope}

\label{sec:CahierChargesSpectre}

Let $P$ be a full dimensional lattice polytope in $N_{\rit}$.
In this text, a spectrum $\Spec_P$ of $P$ is {\em a priori} an ordered sequence of rational numbers
$\alpha_{1}\leq \cdots\leq  \alpha_{\mu}$
that we will identify with the generating function
$\Spec_P (z) :=\sum_{i=1}^{\mu}z^{\alpha_{i}}$. The specifications are the following ($d(\alpha_i )$ denotes the multiplicity of $\alpha_i$
in the $\Spec_P$):

\begin{itemize}
\item {\em Rationality}: the $\alpha_i$'s are rational numbers, 
\item {\em Positivity}: the $\alpha_i$'s are non-negative numbers,
\item {\em Poincar\'e duality}: $\Spec_P (z)=z^n \Spec_P (z^{-1})$,
\item {\em Volume}: $\lim_{z\rightarrow 1} \Spec_P (z)= n! \vol(P)=:\mu_{P}$, 
\item {\em Normalisation}: $d(\alpha_1 )=1$.
%\item {\em Modality}: $d(\alpha_1) \leq d(\alpha_2 )\leq\cdots \leq d(\alpha_{\ell})$ if $\alpha_{\ell}\leq [\frac{n}%{2}]$.
\end{itemize}
In particular,
$\Spec_P$ is contained in $[0,n]$ and $\sum_{i=1}^{\mu}\alpha_{i}=\frac{n}{2}\mu_P $. 
Basic example: if $P$ is a smooth Fano polytope in $\rit^n$, the Poincar\'e polynomial $\sum_{i=1}^n b_{2i}(X_{\Delta_P}) z^i$ is a spectrum of $P$.

\section{Geometric spectrum of a polytope}
\label{sec:SpecGeometriquePolytope}

We define here the geometric spectrum of a polytope and we give several methods in order to compute it. Recall that the toric varieties considered here are assumed to be complete and simplicial.

\subsection{The geometric spectrum}

Let $P$ be a full dimensional lattice polytope in $N_{\rit}$, containing the origin in its interior. Recall the Newton function $\nu$ of $P$ of Definition \ref{def:FonctionSupport}.

\begin{definition}\label{def:specgeopoltope}
The function
\begin{equation}\nonumber
\Spec_P^{\geo} (z) :=(z-1)^{n} \sum_{v\in N} z^{-\nu (v)}
\end{equation}
is the geometric spectrum of  the polytope $P$.
The number 
$e_P :=\lim_{z\rightarrow 1} \Spec_P^{\geo}  (z)$
is the geometric Euler number of $P$.
\end{definition}

\noindent It will follow from Proposition \ref{prop:SpecGeohFunction} that 
$\Spec_P^{\geo}  (z)=\sum_{i=1}^{e_P} z^{\beta_{i}}$
for an ordered sequence  
$\beta_{1}\leq\cdots\leq \beta_{e_P}$ 
of non-negative rational numbers.
%We shall also say that the sequence 
%$\beta_{1}, \beta_{2},\cdots , \beta_{e_{P}}$ is the {\em geometric spectrum} of  the polytope $P$.
%We shall also see that $e_P$ is the normalized volume of $P$.

\subsection{Various interpretations}
We give three methods in order to compute $\Spec_P^{\geo}$, showing that it yields finally a spectrum of $P$
in the sense of Section \ref{sec:CahierChargesSpectre}. The first one and the third one  
are inspired by the works of Musta\c{t}\u{a}-Payne \cite{MustataPayne} and Stapledon \cite{Stapledon}. The second one is inspired by Batyrev's stringy $E$-functions.

\subsubsection{First interpretation: fundamental domains}

Let $P$ be a full dimensional lattice polytope in $N_{\rit}$, containing the origin in its interior and let $\Delta:=\Delta_P$ be the corresponding complete fan as in Section \ref{sec:VarPolFano}. We assume in this section that $\Delta$ is simplicial.
We identify each vertex of $P$ with an element $b_i \in N$.
If $\sigma \in \Delta (r)$ is generated by $b_1 ,\cdots ,b_r$, define
\begin{equation}\nonumber
\Box (\sigma ):=\{\sum_{i=1}^{r} q_i b_i,\ q_i \in [0,1[,\ i=1,\cdots ,r\}
\end{equation}
and
\begin{equation}\nonumber
\boite (\sigma ):=\{\sum_{i=1}^{r} q_i b_i,\ q_i \in ]0,1[,\ i=1,\cdots ,r\}.
\end{equation}
%($\boite (\sigma )$ has already been defined in Section \ref{sec:EventailChampetre}).

\begin{lemma}\label{lemma:SpecGeoBoxDimn}
We have
\begin{equation}\label{eq:DescriptionSpecGeon}
\Spec_P^{\geo}  (z)=\sum_{r=0}^{n}(z-1)^{n-r}\sum_{\sigma\in \Delta (r)}
\sum_{v\in \Box (\sigma )\cap N}z^{\nu (v)}
\end{equation}
and
$e_P =n!\vol(P)=:\mu_P$. 
\end{lemma}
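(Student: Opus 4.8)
The plan is to decompose the lattice $N$ according to the cone of the fan $\Delta = \Delta_P$ that each point lies in, and on each cone to factor the lattice-point generating function using the semigroup structure of the cone. First I would recall that since $\Delta$ is complete, $N_{\rit}$ is the disjoint union of the relative interiors of the cones $\sigma \in \Delta$; intersecting with $N$ and remembering the apex cone $\{0\}$, we get a partition $N = \bigsqcup_{\sigma \in \Delta} (\operatorname{relint}\sigma \cap N)$. For a cone $\sigma \in \Delta(r)$ generated by the (primitive, by the Fano-type normalization) vertices $b_1,\dots,b_r$ of $P$ lying on its rays, the vectors $b_1,\dots,b_r$ are linearly independent, so every $v$ in the relative interior of $\sigma$ has a unique expression $v = \sum_i \lambda_i b_i$ with $\lambda_i > 0$; writing $\lambda_i = m_i + q_i$ with $m_i \in \nit_{>0}$... — actually more cleanly, summing over the closed cone and using inclusion–exclusion on faces, I would establish the standard identity
\begin{equation}\nonumber
\sum_{v \in \sigma \cap N} z^{-\nu(v)} = \Big(\sum_{v \in \Box(\sigma)\cap N} z^{-\nu(v)}\Big)\prod_{i=1}^{r}\frac{1}{1-z^{-\nu(b_i)}}.
\end{equation}
Here the key point is that $\nu$ is \emph{linear on each cone} of $\Delta$ (it agrees with $\langle u_F,\cdot\rangle$ on the cone over the facet $F$), that $\nu(b_i) = 1$ since $b_i \in \partial P$ (so $\nu(b_i)=1$), and that the unique decomposition $v = v_0 + \sum_i m_i b_i$ with $v_0 \in \Box(\sigma)\cap N$, $m_i \in \nit$, is compatible with additivity of $\nu$ on $\sigma$. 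With $\nu(b_i)=1$ the product simplifies to $\big(1-z^{-1}\big)^{-r} = \big(z/(z-1)\big)^{r}$, wait — I must be careful: $(1-z^{-1})^{-1} = z/(z-1)$, so the product is $z^r/(z-1)^r$.

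Next I would assemble the pieces. Summing the cone-identity over all $\sigma \in \Delta$ (each $v\neq 0$ counted once via its unique minimal cone; the $\{0\}$ cone contributes the constant $1$, consistently with the $r=0$ term and empty box $\Box(\{0\}) = \{0\}$) gives
\begin{equation}\nonumber
\sum_{v\in N} z^{-\nu(v)} = \sum_{r=0}^{n}\sum_{\sigma\in\Delta(r)}\Big(\sum_{v\in\Box(\sigma)\cap N} z^{-\nu(v)}\Big)\frac{z^{r}}{(z-1)^{r}}.
\end{equation}
Multiplying by $(z-1)^n$ yields
\begin{equation}\nonumber
Spec_P^{geo}(z) = \sum_{r=0}^{n}(z-1)^{n-r}\sum_{\sigma\in\Delta(r)} z^{r}\sum_{v\in\Box(\sigma)\cap N} z^{-\nu(v)}.
\end{equation}
To match \eqref{eq:DescriptionSpecGeon} I would invoke the involution on $\Box(\sigma)\cap N$ sending $v = \sum q_i b_i \mapsto \sum (1-q_i) b_i$ when all $q_i \neq 0$ (i.e. on $\boite(\sigma)$), together with the corresponding bookkeeping on the lower-dimensional faces, under which $\nu(v) \mapsto r - \nu(v)$; this turns $z^{r}\sum_{v\in\Box(\sigma)\cap N} z^{-\nu(v)}$ into $\sum_{v\in\Box(\sigma)\cap N} z^{\nu(v)}$, giving the stated formula. (Alternatively one observes directly that $z^{r - \nu(v)}$ runs over the same exponents as $z^{\nu(v')}$ as $v$ runs over $\Box(\sigma)\cap N$.)

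Finally, for $e_P = n!\,\vol(P) = \mu_P$: taking $z\to 1$ in \eqref{eq:DescriptionSpecGeon} kills every term with $n-r>0$, leaving $e_P = \sum_{\sigma\in\Delta(n)} \card(\Box(\sigma)\cap N)$. Since $b_1,\dots,b_n$ generate a full-dimensional sublattice, $\card(\Box(\sigma)\cap N)$ equals the index $[N : \zit b_1 + \cdots + \zit b_n]$, which is the normalized volume of the parallelepiped, i.e. the normalized volume $n!\,\vol(\operatorname{conv}(0,b_1,\dots,b_n))$ of the simplicial cone's "unit simplex" times $n!$... more precisely $\card(\Box(\sigma)\cap N) = n!\,\vol(\mathrm{conv}(0,b_1,\dots,b_n))$. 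Summing over the maximal cones $\sigma$, whose corresponding simplices $\operatorname{conv}(0,b_1,\dots,b_n)$ tile $P$ (because $\Delta_P$ is the fan over the faces of $P$ and the $b_i$ are the vertices of $P$), gives $\sum_\sigma n!\,\vol = n!\,\vol(P) = \mu_P$.

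The main obstacle I anticipate is the careful combinatorial bookkeeping in the cone-by-cone decomposition: making the partition of $N$ by relative interiors interact correctly with the half-open box $\Box(\sigma)$ (whose points sit on \emph{faces} of $\sigma$, not only its interior), and verifying that the duality involution $q_i \mapsto 1-q_i$ is well-defined face-by-face and has the claimed effect $\nu(v)\mapsto \dim\sigma - \nu(v)$ on each stratum — equivalently, getting the two forms of \eqref{eq:DescriptionSpecGeon} (with $z^{-\nu}$ versus $z^{\nu}$) to agree without off-by-one errors in the powers of $(z-1)$ and $z$. Everything else (linearity of $\nu$ on cones, $\nu(b_i)=1$, the geometric-series factorization, the volume/index identity) is routine.
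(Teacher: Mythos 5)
Your overall strategy (partition $N$ by relative interiors of cones of $\Delta_P$, factor lattice-point generating functions over half-open boxes using $\nu(b_i)=1$, get the volume from the maximal cones) is the paper's strategy, and your treatment of $e_P=n!\vol(P)$ is essentially the paper's. However, the central assembled identity you display is false, and so is the step meant to repair it; the ``bookkeeping on lower-dimensional faces'' that you flag at the end as the anticipated obstacle is exactly where the argument breaks, and it is not carried out. Concretely, $\bigl(\sum_{v\in\Box(\sigma)\cap N} z^{-\nu(v)}\bigr)\,z^{r}/(z-1)^{r}$ is the generating function of the \emph{closed} cone $\sigma\cap N$, so summing it over all $\sigma\in\Delta$ counts every lattice point once for each cone whose closure contains it (the origin alone is counted $\card(\Delta)$ times); this is not the sum over your partition by minimal cones. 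Already for $n=1$ and $P=[-1,1]$ your displayed identity reads $\frac{z+1}{z-1}=\frac{3z-1}{z-1}$, which is false. The repair step ``$z^{r}\sum_{v\in\Box(\sigma)\cap N}z^{-\nu(v)}=\sum_{v\in\Box(\sigma)\cap N}z^{\nu(v)}$'' is also false: the involution $q_i\mapsto 1-q_i$ acts only on the nonzero coordinates, so on a box point lying on a face $\tau\preceq\sigma$ it sends $\nu(v)$ to $\dim\tau-\nu(v)$, not to $r-\nu(v)$; for a smooth maximal cone, where $\Box(\sigma)\cap N=\{0\}$, the left-hand side is $z^{n}$ while the right-hand side is $1$. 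The two errors happen to cancel globally, which is why you land on the correct statement, but nothing in the proposal establishes that cancellation.

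The correct cone-by-cone statement, which is what the paper proves (by a trichotomy, but with the same content), concerns the relative interior: writing $v\in\operatorname{relint}\sigma\cap N$ uniquely as $v=w+\sum_i m_i b_i$ with $w=\sum_i q_i b_i\in\Box(\sigma)\cap N$ and $m_i\in\nit$, the interior condition forces $m_i\geq 1$ exactly when $q_i=0$, so the geometric series contributes $z^{\,r-k(w)}/(z-1)^{r}$, where $k(w)$ is the number of vanishing coordinates of $w$; hence
\begin{equation}\nonumber
(z-1)^{r}\sum_{v\in\operatorname{relint}\sigma\cap N} z^{-\nu(v)}
=\sum_{w\in\Box(\sigma)\cap N} z^{\,r-k(w)-\nu(w)}
=\sum_{w\in\Box(\sigma)\cap N} z^{\nu(w)},
\end{equation}
the last equality because the involution preserves each stratum (same vanishing set) and there sends $\nu$ to $r-k(w)-\nu$. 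Summing this over the genuine partition $N=\bigsqcup_{\sigma\in\Delta}(\operatorname{relint}\sigma\cap N)$ and multiplying by the appropriate power $(z-1)^{n-r}$ gives (\ref{eq:DescriptionSpecGeon}). Two minor points: the lemma does not assume $P$ Fano, so the vertices $b_i$ need not be primitive (only $\nu(b_i)=1$ is used), and your volume argument for $e_P$ agrees with the paper's.
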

\begin{proof}
Let $\sigma\in \Delta (r)$. A lattice element $v\in \stackrel{\circ}{\sigma}$ has one of the following decompositions: 
\begin{itemize}
\item $v =w +\sum_{i=1}^r \lambda_i b_i $ with
$w\in \boite (\sigma)\cap N$ and  $\lambda_i \in\nit$ for all $i$,
\item $v =w +\sum_{i=1}^r \lambda_i b_i $ with
$w\in \boite^c (\sigma )\cap N-\{0\}$, $\lambda_i \geq 0$ for all $i\geq 2$ and $\lambda_1 >0$ (up to renumbering),
\item $v= \sum_{i=1}^r \lambda_i b_i$ where $\lambda_i >0$ for all $i$
\end{itemize}
where 
$\boite^c (\sigma )$ is the complement of $\boite (\sigma )$ in $\Box (\sigma )$.
We get
\begin{equation}\label{eq:intermediaireBox}
(z-1)^r \sum_{v\in \stackrel{\circ}{\sigma}\cap N}z^{-\nu (v)}
=\sum_{v\in \boite (\sigma)\cap N }z^{r-\nu (v)} + 
\sum_{v\in \boite^c (\sigma )\cap N-\{0\}} z^{r-1-\nu (v)} +1
\end{equation} 
because 
\begin{itemize}
\item $\sum_{\lambda_1 ,\cdots, \lambda_r \geq 0}z^{-\nu (w)}z^{-\lambda_1}\cdots z^{-\lambda_r}=\frac{z^{r-\nu (w)}}{(z-1)^r}$
if $w\in \boite (\sigma)\cap N$,
\item $\sum_{\lambda_1 >0 ,\lambda_2 ,\cdots, \lambda_r \geq 0}z^{-\nu (w)}z^{-\lambda_1}\cdots z^{-\lambda_r}=\frac{z^{r-1-\nu (w)}}{(z-1)^r}$
if $w\in \boite^c (\sigma )\cap N - \{ 0\}$,
\item $\sum_{\lambda_1 ,\cdots, \lambda_r > 0}z^{-\lambda_1}\cdots z^{-\lambda_r}=\frac{1}{(z-1)^r}$
\end{itemize}
(and we use the fact that $\nu (b_i )=1$). Moreover,   
\begin{itemize}
\item $\alpha\in \nu (\boite (\sigma)):=\{\nu (v), v\in \boite (\sigma)\}$ 
if and only if
$r-\alpha\in \nu (\boite (\sigma))$,
\item $\alpha\in \nu (\boite^c (\sigma )):=\{\nu (v), v\in \boite^c (\sigma ) \}$ 
if and only if
$r-1-\alpha\in \nu (\boite^c (\sigma ))$
\end{itemize}
because $q_i \in ]0,1[$ if and only if $1-q_i \in ]0,1[$.
We then deduce from (\ref{eq:intermediaireBox}) that
\begin{equation}\label{eq:intermediaireBoxBis}
(z-1)^n \sum_{v\in \stackrel{\circ}{\sigma}\cap N}z^{-\nu (v)}
=(z-1)^{n-r}\sum_{v\in \Box (\sigma)\cap N }z^{\nu (v)} 
\end{equation} 
for any $\sigma\in\Delta (r)$. 
Equality (\ref{eq:DescriptionSpecGeon}) follows because the relative interiors of the cones of the complete fan 
 $\Delta$ give a partition of its support. 
For the assertion about the Euler number, notice that 
\begin{equation}\nonumber
\lim_{z\rightarrow 1} \Spec_P^{\geo}  (z)=\sum_{\sigma\in \Delta (n)}
\sum_{v\in \Box (\sigma )\cap N}1= n!\vol(P)
\end{equation}
because the normalized volume of $\sigma \cap \{v\in N_{\rit},\ \nu (v)\leq 1\}$ 
is equal to the number of lattice points in $\Box (\sigma )$.
\end{proof}

\begin{proposition}\label{prop:SpecGeohFunction}
Let $P$ be a full dimensional simplicial lattice polytope in $N_{\rit}$ containing the origin in its interior. Then
\begin{equation}\nonumber
\Spec_P^{\geo}  (z)=\sum_{\sigma\in \Delta }
\sum_{v\in \boite (\sigma )\cap N}h_{\sigma }(z) z^{\nu (v)}
\end{equation}
where $h_{\sigma}(z):=\sum_{\sigma \subseteq \tau} (z-1)^{n-\dim\tau}$.
\end{proposition}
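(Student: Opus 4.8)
The plan is to start from the description of $Spec_P^{geo}$ given in Lemma~\ref{lemma:SpecGeoBoxDimn}, namely
\begin{equation}\nonumber
Spec_P^{geo}(z)=\sum_{r=0}^{n}(z-1)^{n-r}\sum_{\sigma\in\Delta(r)}\sum_{v\in\Box(\sigma)\cap N}z^{\nu(v)},
\end{equation}
and to regroup the sum according to the smallest cone containing a given lattice point. The key observation is that $\Box(\sigma)$ is the disjoint union of the sets $\boite(\tau)$ as $\tau$ runs over the faces of $\sigma$ (including $\tau=\{0\}$, for which $\boite(\{0\})=\{0\}$): indeed a point $v=\sum_{i=1}^{r}q_i b_i$ with $q_i\in[0,1[$ lies in the relative interior of the face spanned by those $b_i$ with $q_i\neq 0$, and conversely every $v\in\boite(\tau)\cap N$ with $\tau\subseteq\sigma$ sits in $\Box(\sigma)\cap N$. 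Hence $\sum_{v\in\Box(\sigma)\cap N}z^{\nu(v)}=\sum_{\tau\subseteq\sigma}\sum_{v\in\boite(\tau)\cap N}z^{\nu(v)}$.

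Substituting this into the displayed formula and interchanging the order of summation so that $\tau$ becomes the outer index, I would write
\begin{equation}\nonumber
Spec_P^{geo}(z)=\sum_{\tau\in\Delta}\ \sum_{v\in\boite(\tau)\cap N}z^{\nu(v)}\ \sum_{\sigma\,:\,\tau\subseteq\sigma}(z-1)^{n-\dim\sigma},
\end{equation}
where the innermost sum ranges over all cones $\sigma\in\Delta$ containing $\tau$ as a face (this includes $\sigma=\tau$). By the very definition $h_{\tau}(z):=\sum_{\tau\subseteq\sigma}(z-1)^{n-\dim\sigma}$, the inner sum is exactly $h_{\tau}(z)$, and one recognizes the claimed identity after renaming $\tau$ back to $\sigma$.

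The only point requiring care is the combinatorial bookkeeping in the first step: one must check that the decomposition $\Box(\sigma)\cap N=\bigsqcup_{\tau\preceq\sigma}\boite(\tau)\cap N$ is genuinely a partition, which comes down to the elementary fact that a coefficient $q_i$ is either zero or lies in $]0,1[$, so each lattice point of $\Box(\sigma)$ determines uniquely the face on whose relative interior box it lies. I do not expect any real obstacle here; the argument is a reindexing of a finite sum, and the convergence/formal-power-series issues were already settled in the proof of Lemma~\ref{lemma:SpecGeoBoxDimn} (where $\nu(b_i)=1$ and the cones partition the support of $\Delta$ were used). The one thing to state explicitly is that in the regrouped sum every pair $(\tau,\sigma)$ with $\tau\subseteq\sigma$ is counted once, which is immediate from Fubini for finite sums.
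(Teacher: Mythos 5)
Your proposal is correct and is exactly the argument the paper intends: its proof of this proposition is the one-line remark that the identity "follows from (\ref{eq:DescriptionSpecGeon})", i.e.\ precisely the regrouping $\Box(\sigma)\cap N=\bigsqcup_{\tau\subseteq\sigma}\boite(\tau)\cap N$ followed by the interchange of summation that you carry out in detail. Your explicit verification of the partition (including the zero cone with $\boite(\{0\})=\{0\}$) and of the Fubini step simply fills in what the paper leaves to the reader.
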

\begin{proof} Follows from equation (\ref{eq:DescriptionSpecGeon}).
\end{proof}

\noindent It turns out that $h_{\sigma}(z)$ is the Hodge-Deligne polynomial of the orbit closure $V(\sigma )$ (as defined for instance in \cite[page 121]{CLS}) of the orbit $O(\sigma )$. Because $V(\sigma )$ is a toric variety, 
the coefficients of $h_{\sigma}(z)$ are non-negative integers (see for instance \cite[Lemma 2.4]{Stapledon} and the references therein) and we get
$\Spec_P^{\geo}  (z)=\sum_{i=1}^{\mu_P} z^{\beta_i}$
for a sequence $\beta_1 ,\cdots ,\beta_{\mu_P}$
of non-negative rational numbers. We will also call this sequence the {\em geometric spectrum of $P$}. 

\begin{corollary}\label{coro:SymetrieSpecGeo}
The geometric spectrum of $P$ satisfies $z^n \Spec^{\geo}_P (z^{-1})=\Spec^{\geo}_P (z)$. 
\end{corollary}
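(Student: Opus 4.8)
The plan is to prove the symmetry $z^n Spec_P^{geo}(z^{-1}) = Spec_P^{geo}(z)$ directly from the defining formula, exploiting the fact that $\nu$ is positively homogeneous of degree one and that the fan $\Delta_P$ is complete and symmetric enough for the relevant bookkeeping. First I would work from the very definition $Spec_P^{geo}(z) = (z-1)^n \sum_{v \in N} z^{-\nu(v)}$, although this sum is only a formal/meromorphic object; the cleanest route is instead to use the finite expression already established in Lemma \ref{lemma:SpecGeoBoxDimn}, namely
\begin{equation}\nonumber
Spec_P^{geo}(z) = \sum_{r=0}^{n} (z-1)^{n-r} \sum_{\sigma \in \Delta(r)} \sum_{v \in \Box(\sigma)\cap N} z^{\nu(v)}.
\end{equation}
Each inner sum is a genuine polynomial in $z$, so the substitution $z \mapsto z^{-1}$ and multiplication by $z^n$ are unproblematic.

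The key step is the pointwise symmetry of the box contributions. Fix a cone $\sigma \in \Delta(r)$ generated by $b_1,\dots,b_r$. For $v = \sum_{i=1}^r q_i b_i \in \Box(\sigma)\cap N$ with $q_i \in [0,1)$, I would consider the complementary element; the natural candidate is $v' = \sum_{i=1}^r (1-q_i)^{\sharp} b_i$ where one must take care of the $q_i = 0$ coordinates (those stay $0$). More precisely, writing $\sigma$ with exactly the generators $b_i$ for which $q_i \neq 0$, the map $v \mapsto \sum b_i - v$ (sum over those generators) is an involution of $\Box(\sigma)\cap N$ — this is essentially the observation already used in the proof of Lemma \ref{lemma:SpecGeoBoxDimn} that $q_i \in {]0,1[}$ iff $1-q_i \in {]0,1[}$, extended to the half-open box by partitioning according to which coordinates vanish, i.e. by stratifying $\Box(\sigma)$ into the open boxes $\boite(\tau)$ for $\tau \subseteq \sigma$. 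Since $\nu$ is linear on $\sigma$ with $\nu(b_i) = 1$, we get $\nu(v) + \nu(v') = (\text{number of nonzero coordinates}) = \dim\tau$ where $\tau$ is the face carrying $v$. This is exactly the shift needed.

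Assembling this, I would rewrite $Spec_P^{geo}(z)$ via Proposition \ref{prop:SpecGeohFunction} as $\sum_{\sigma \in \Delta} \sum_{v \in \boite(\sigma)\cap N} h_\sigma(z) z^{\nu(v)}$, where $h_\sigma(z) = \sum_{\sigma \subseteq \tau}(z-1)^{n-\dim\tau}$. Then $z^n Spec_P^{geo}(z^{-1}) = \sum_\sigma \sum_{v} z^n h_\sigma(z^{-1}) z^{-\nu(v)}$. A short computation gives $z^n h_\sigma(z^{-1}) = \sum_{\sigma\subseteq\tau} z^n z^{-(n-\dim\tau)}(z^{-1}-1)^{n-\dim\tau} = \sum_{\sigma\subseteq\tau} z^{\dim\tau}(z^{-1}-1)^{n-\dim\tau} = \sum_{\sigma\subseteq\tau}(-1)^{n-\dim\tau}z^{\dim\sigma + \dim\tau - n}\cdots$ — in any case one finds $z^n h_\sigma(z^{-1}) = z^{\dim\sigma} h_\sigma(z)$ up to sign tracking, using $(z^{-1}-1)^{k} = (-1)^k z^{-k}(z-1)^k$. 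Combined with the involution on each $\boite(\sigma)\cap N$ sending $v$ to $\sum_{\rho_i\subseteq\sigma} b_i - v$, which satisfies $\nu(v) + \nu(\sum b_i - v) = \dim\sigma$, the total exponent $-\nu(v) + \dim\sigma = \nu(\sum b_i - v)$ matches perfectly and the sum is carried back to $Spec_P^{geo}(z)$.

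The main obstacle I anticipate is the bookkeeping with half-open versus open boxes: the involution $v \mapsto \sum b_i - v$ is clean on the open box $\boite(\sigma)$ but on $\Box(\sigma)$ one must either pass to the stratification by faces (as in Proposition \ref{prop:SpecGeohFunction}) or check that the vanishing coordinates are handled consistently, and one must verify the sign/degree identity $z^n h_\sigma(z^{-1}) = z^{\dim\sigma}h_\sigma(z)$ carefully since $h_\sigma$ has mixed-degree terms. Once the formulation via $\boite(\sigma)$ and $h_\sigma$ is adopted, both of these reduce to elementary algebra, so the argument is short; the delicate point is simply choosing that formulation rather than fighting with the raw half-open boxes.
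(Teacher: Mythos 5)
Your overall route is the same as the paper's: rewrite $Spec_P^{geo}$ via Proposition \ref{prop:SpecGeohFunction} as $\sum_{\sigma\in\Delta}\sum_{v\in\boite(\sigma)\cap N}h_\sigma(z)z^{\nu(v)}$, use the involution $v\mapsto\sum_{\rho_i\subseteq\sigma}b_i-v$ of $\boite(\sigma)\cap N$ together with $\nu(v)+\nu\bigl(\sum_i b_i-v\bigr)=\dim\sigma$ (linearity of $\nu$ on $\sigma$ and $\nu(b_i)=1$), and the degree shift $z^{n}h_\sigma(z^{-1})=z^{\dim\sigma}h_\sigma(z)$. Your worry about half-open versus open boxes is moot once you adopt the $\boite$/$h_\sigma$ formulation, exactly as the paper does, and the involution part of your argument is complete and correct.

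The gap is in your justification of $z^{n-\dim\sigma}h_\sigma(z^{-1})=h_\sigma(z)$. This is \emph{not} elementary sign-tracking from the formula $h_\sigma(z)=\sum_{\sigma\subseteq\tau}(z-1)^{n-\dim\tau}$: your substitution only yields $z^nh_\sigma(z^{-1})=\sum_{\sigma\subseteq\tau}(-1)^{n-\dim\tau}z^{\dim\tau}(z-1)^{n-\dim\tau}$, and identifying this with $z^{\dim\sigma}h_\sigma(z)$ is precisely the palindromy (Dehn--Sommerville) statement, which uses the completeness of the fan $\Delta/\sigma$ and cannot follow from formal manipulation alone. For instance, for the non-complete one-dimensional ``fan'' consisting of the origin and a single ray, the analogous sum is $(z-1)+1=z$, while $z\cdot z^{-1}=1$, so a purely formal argument would prove something false. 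The correct justification, and the one the paper uses, is that $h_\sigma$ is the Hodge--Deligne (Poincar\'e) polynomial of the complete simplicial toric variety $V(\sigma)\cong X_{\Delta/\sigma}$ of dimension $n-\dim\sigma$, so the identity is Poincar\'e duality for that variety (see \cite[Lemma 2.4]{Stapledon}). With that step repaired (by citation or by a Dehn--Sommerville argument over the complete fan $\Delta/\sigma$), your proof goes through and coincides with the paper's.
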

\begin{proof} Follows from Proposition \ref{prop:SpecGeohFunction}
because $z^{n-\dim\sigma}h_{\sigma}(z^{-1})=h_{\sigma}(z)$, see \cite[Lemma 2.4]{Stapledon},
and $\sum_{i}(1-m_{i})b_{i}\in \boite (\sigma )$ if $\sum_{i}m_{i}b_{i}\in \boite (\sigma )$.  
\end{proof}

\begin{corollary}\label{coro:SpecGeoOrbifold}
Let $P$ be a full dimensional simplicial lattice polytope in $N_{\rit}$ containing the origin in its interior and let
$\mathbf{\Delta} =(N, \Delta_P ,\{b_i\})$ be its stacky fan.
Then
\begin{equation}\nonumber
\Spec_{P}^{\geo}(z)=\sum_{\alpha\in\qit} \dim_{\cit}H^{2\alpha}_{\orb}({\cal X}(\mathbf{\Delta}),\cit) z^{\alpha}.
\end{equation}
The geometric spectrum of $P$ is the Hilbert-Poincar\'e series of the graded vector space $H^{2*}_{\orb}({\cal X}(\mathbf{\Delta}),\cit)$. 
\end{corollary}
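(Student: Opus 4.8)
The plan is to compare the geometric spectrum, in the form given by Proposition \ref{prop:SpecGeohFunction}, with the Borisov--Chen--Smith description of orbifold cohomology recalled in equation (\ref{eq:OrbiCohDecomposition}). So the first step is to write
\begin{equation}\nonumber
\sum_{\alpha\in\qit} \dim_{\cit}H^{2\alpha}_{orb}({\cal X}(\mathbf{\Delta}),\cit)\, z^{\alpha}
=\sum_{\alpha}\Bigl(\sum_{\sigma\in\Delta}\sum_{v\in\boite(\sigma)\cap N}\dim_{\cit}H^{2(\alpha-\nu(v))}(X_{\Delta/\sigma},\cit)\Bigr) z^{\alpha},
\end{equation}
and then, for each fixed $\sigma$ and each $v\in\boite(\sigma)\cap N$, to pull out the factor $z^{\nu(v)}$ and recognize the remaining sum $\sum_{j} \dim_{\cit}H^{2j}(X_{\Delta/\sigma},\cit)\, z^{j}$ as a Poincaré polynomial. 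Comparing with Proposition \ref{prop:SpecGeohFunction}, it then suffices to prove the single identity $h_{\sigma}(z)=\sum_{j}\dim_{\cit}H^{2j}(X_{\Delta/\sigma},\cit)\, z^{j}$.

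The second step is therefore to identify $h_{\sigma}(z)=\sum_{\sigma\subseteq\tau}(z-1)^{n-\dim\tau}$ with the Poincaré polynomial of the toric variety $X_{\Delta/\sigma}$ associated to the quotient fan $\Delta/\sigma$ in $N(\sigma)_{\qit}$. This is essentially a classical computation: $X_{\Delta/\sigma}$ is a complete simplicial toric variety, so its rational cohomology is concentrated in even degrees and $\dim_{\cit}H^{2j}(X_{\Delta/\sigma},\cit)$ equals the number $d_{n-\dim\sigma-j}$ in the $h$-vector of the fan $\Delta/\sigma$; equivalently, using the standard formula expressing cohomology of a complete simplicial toric variety in terms of the $f$-vector of the fan (see \cite[\S12.3]{CLS}), one has $\sum_{j}\dim_{\cit}H^{2j}(X_{\Delta/\sigma},\cit)\,z^{j}=\sum_{\bar\tau\in\Delta/\sigma}(z-1)^{\dim(\Delta/\sigma)-\dim\bar\tau}$. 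Since cones $\bar\tau$ of $\Delta/\sigma$ correspond bijectively to cones $\tau\in\Delta$ with $\sigma\subseteq\tau$, with $\dim\bar\tau=\dim\tau-\dim\sigma$ and $\dim(\Delta/\sigma)=n-\dim\sigma$, the exponent $\dim(\Delta/\sigma)-\dim\bar\tau$ equals $n-\dim\tau$, and the two sums agree term by term. Alternatively one can invoke the remark already made in the text that $h_{\sigma}(z)$ is the Hodge--Deligne polynomial of $V(\sigma)$ and note that $X_{\Delta/\sigma}\cong V(\sigma)$ as toric varieties, so that their Poincaré polynomials coincide; combining this with Corollary \ref{coro:SymetrieSpecGeo} (or directly with \cite[Lemma 2.4]{Stapledon}) gives the equality of polynomials.

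Putting the two steps together yields
\begin{equation}\nonumber
\sum_{\alpha\in\qit} \dim_{\cit}H^{2\alpha}_{orb}({\cal X}(\mathbf{\Delta}),\cit)\, z^{\alpha}
=\sum_{\sigma\in\Delta}\sum_{v\in\boite(\sigma)\cap N} z^{\nu(v)}\, h_{\sigma}(z)
=Spec_{P}^{geo}(z),
\end{equation}
which is exactly the claimed identity. One point requiring a word of care is that the sum $\sum_{v\in N}z^{-\nu(v)}$ defining $Spec_P^{geo}$ is only a formal/rational-function expression, whereas the orbifold cohomology side is genuinely a polynomial; but Proposition \ref{prop:SpecGeohFunction} already resolves $Spec_P^{geo}(z)$ into a finite sum with polynomial coefficients $h_{\sigma}(z)$, so the comparison takes place between honest polynomials and no convergence issue arises. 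The only real content is the classical identification of $h_{\sigma}$ with the Poincaré polynomial of $X_{\Delta/\sigma}$, which I expect to be the main (though standard) obstacle; everything else is bookkeeping of the decomposition (\ref{eq:OrbiCohDecomposition}) indexed by pairs $(\sigma,v)$ with $v\in\boite(\sigma)\cap N$.
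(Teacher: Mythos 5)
Your proposal is correct and follows essentially the same route as the paper: expand the Hilbert--Poincar\'e series of $H^{2*}_{orb}$ via the Borisov--Chen--Smith decomposition (\ref{eq:OrbiCohDecomposition}), factor out $z^{\nu(v)}$, and identify $h_{\sigma}(z)$ with the Poincar\'e polynomial of $X_{\Delta/\sigma}$, then conclude by Proposition \ref{prop:SpecGeohFunction}. Your second variant of the key identification (via $X_{\Delta/\sigma}\cong V(\sigma)$ and the Hodge--Deligne polynomial remark) is exactly the paper's argument, and your direct $f$-vector computation is a harmless, equally valid substitute.
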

\begin{proof} 
By formula (\ref{eq:OrbiCohDecomposition}), the orbifold degrees
are $\alpha =j+\nu (v)$ where $v\in \boite (\sigma )$ and $j=0,\cdots , n-\dim\sigma$. We thus get  
\begin{equation}\nonumber
 \sum_{\alpha}\dim_{\qit} H_{\orb}^{2\alpha}({\cal X}(\mathbf{\Delta}), \qit )z^{\alpha}=\sum_{\sigma\in\Delta}\sum_{v\in\boite (\sigma)\cap N }
\sum_{j=0}^{n-\dim \sigma}\dim_{\qit} H^{2j}(X_{\Delta/ \sigma}, \qit )z^{j} z^{\nu (v)}.
\end{equation}
Now, $\sum_{j=0}^{n-\dim \sigma}\dim_{\qit} H^{2j}(X_{\Delta/ \sigma}, \qit )z^{j}=h_{\sigma} (z)$
because the orbit closure
$V(\sigma )$ and the toric variety $X_{\Delta /\sigma}$ are isomorphic (see \cite[Proposition 3.2.7]{CLS}) and we get
\begin{equation}\nonumber
 \sum_{\alpha}\dim_{\qit} H_{\orb}^{2\alpha}({\cal X}(\mathbf{\Delta}), \qit )z^{\alpha}=
 \sum_{\sigma\in\Delta}\sum_{v\in\boite (\sigma)\cap N } h_{\sigma}(z) z^{\nu (v)}.
\end{equation}
The assertion then follows from Proposition \ref{prop:SpecGeohFunction}.
\end{proof}

To sum up, the geometric spectrum of a simplicial polytope is a spectrum in the sense of Section \ref{sec:CahierChargesSpectre}.
Rationality, positivity and the volume property are given by Lemma \ref{lemma:SpecGeoBoxDimn} and Proposition \ref{prop:SpecGeohFunction} and
symmetry (Poincar\'e duality) by Corollary \ref{coro:SymetrieSpecGeo}.

\subsubsection{Second interpretation: stacky $E$-function of a polytope (resolution of singularities)}
\label{sec:InterpretationResolutionSing}

Let $P$ be a full dimensional lattice polytope in $N_{\rit}$, containing the origin in its interior.
Let $\rho :Y\rightarrow X$ be a resolution of $X:=X_{\Delta_P}$ as in 
Section \ref{sec:FonctionsFiliformes} and let $\rho_1 ,\cdots ,\rho_r$ be the rays of $Y$, with
 primitive generators $v_1 ,\cdots ,v_r$ and associated divisors $D_1 ,\cdots, D_r$.
Put, for a subset $J\subset I:=\{1,\cdots ,r\}$,
$D_J := \cap_{j\in J}D_j$ if $ J\neq \emptyset$ and $D_J := Y$ if $J= \emptyset$ and
define 
\begin{equation}\label{eq:DefStringyFunctionChampetre}
E_{\st, P}(z):=\sum_{J\subset I}E( D_J , z)\prod_{j\in J}\frac{z-z^{\nu_j}}{z^{\nu_j}-1}
\end{equation} 
where $\nu_j =\nu (v_j)$ and $\nu$ 
is the Newton function of $P$ of
Definition \ref{def:FonctionSupport}.

\begin{proposition}\label{prop:SpecGeoEgalEstP}
 We have $\Spec_{P}^{\geo}(z)= E_{\st, P}(z)$. In particular, $E_{\st, P}(z)$ does not depend on the resolution $\rho$.
\end{proposition}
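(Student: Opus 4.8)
The plan is to compute both sides of the claimed identity by the same combinatorial bookkeeping over cones, so that the equality becomes a cone-by-cone matching. First I would recall that the resolution $\rho : Y \to X_{\Delta_P}$ corresponds to a smooth refinement $\Delta'$ of $\Delta_P$, and that the Newton function $\nu$ extends to a piecewise-linear function on $N_\rit$ which is linear on each cone of $\Delta_P$ (hence on each cone of $\Delta'$), with $\nu(v_j) = \nu_j$ for the primitive generators $v_j$ of the rays of $\Delta'$. Since $Y$ is smooth, each of its torus orbits is indexed by a cone $\tau \in \Delta'$, and the face-inclusion-exclusion $E(D_J, z) = \sum_{J \subseteq J'} E(D_{J'}^\circ, z)$ lets me rewrite $E_{st,P}(z)$ in terms of the $E(D_J^\circ, z)$, which are (up to the standard torus-orbit computation) monomials $(z-1)^{n - \dim \tau}$ times the number of lattice points contributing — more precisely, for a smooth toric variety $E(O(\tau), z) = (z-1)^{n-\dim\tau}$ for the orbit $O(\tau)$.

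Next I would open up the product $\prod_{j\in J}\frac{z - z^{\nu_j}}{z^{\nu_j}-1}$ as a geometric-type sum: writing $\frac{z - z^{\nu_j}}{z^{\nu_j}-1} = \sum$ over the ``box'' directions, exactly as in the proof of Lemma \ref{lemma:SpecGeoBoxDimn}, the factor $\frac{z-z^{\nu_j}}{z^{\nu_j}-1}$ bookkeeps the lattice points $\lambda_j v_j$ with $\lambda_j \in ]0,1[$ along the ray $\rho_j$ (times $(z-1)$), because $\nu$ is linear along the ray and $\nu(v_j) = \nu_j$. Grouping the cones of $\Delta'$ lying over each cone $\sigma$ of $\Delta_P$, the discrepancy relation $a_j = \varphi(v_j') - 1$ from \eqref{eq:ResolutionDiviseurCanonique} guarantees that $\nu_j > 0$ for every ray not in $\Delta_P(1)$, so the series converge and the combinatorics is the same as in the non-resolved picture; summing the contributions of all cones of $\Delta'$ contained in a fixed cone $\sigma$ of $\Delta_P$ reassembles, by the additivity of lattice-point counting over the decomposition of $\sigma$ into subcones of $\Delta'$, exactly the sum $\sum_{v \in \Box(\sigma)\cap N} z^{\nu(v)}$ weighted by $(z-1)^{n-\dim\sigma}$ — but with the extra orbit-closure factor being precisely $h_\sigma(z)$ on the refined side.

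Concretely, I would show that $E_{st,P}(z) = \sum_{\tau \in \Delta'} \sum_{v \in \boite(\tau)\cap N} (z-1)^{n-\dim\tau} z^{\nu(v)}$ by the orbit decomposition of $Y$ together with the box-expansion of the product, and then observe that this equals $\sum_{r=0}^n (z-1)^{n-r}\sum_{\tau \in \Delta'(r)} \sum_{v\in\Box(\tau)\cap N} z^{\nu(v)}$ after regrouping the half-open boxes $\Box(\tau)$ into disjoint $\boite$-pieces over faces of $\tau$ (the standard identity $\Box(\tau)\cap N = \bigsqcup_{\tau' \preceq \tau}\boite(\tau')\cap N$). This is formally the same expression as \eqref{eq:DescriptionSpecGeon} in Lemma \ref{lemma:SpecGeoBoxDimn} but computed with respect to the refined fan $\Delta'$ instead of $\Delta_P$; since $\Delta'$ is itself a complete simplicial (in fact smooth) fan whose support equals $N_\rit$ and whose associated polytope data still has $\nu$ as Newton function, Lemma \ref{lemma:SpecGeoBoxDimn} applied to the refinement gives exactly $Spec_P^{geo}(z)$. (Equivalently, one checks directly that the right-hand side is refinement-invariant: subdividing a cone $\sigma$ into subcones $\tau_i$ does not change $\sum (z-1)^{n-\dim}\cdot\#(\Box\cap N)$ because lattice-point volume is additive and $\nu$ is piecewise linear.) The independence of $E_{st,P}$ from $\rho$ is then immediate since $Spec_P^{geo}$ manifestly does not involve $\rho$.

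The main obstacle I anticipate is the careful matching of the ``box'' expansion of the rational factors $\frac{z-z^{\nu_j}}{z^{\nu_j}-1}$ with the lattice-point stratification of the cones of $\Delta'$: one must be precise about which half-open vs. open conditions appear (the $\lambda_j \geq 0$ versus $\lambda_j > 0$ dichotomy that already made the proof of Lemma \ref{lemma:SpecGeoBoxDimn} delicate), and one must verify that the inclusion-exclusion over subsets $J$ of rays exactly reproduces the passage between $\Box$ and $\boite$. A secondary point requiring care is checking that $\nu$ restricted to each (refined) cone is linear and agrees with $\nu_j$ on generators — this uses that $\Delta'$ refines $\Delta_P$ and $\nu$ is linear on the cones of $\Delta_P$ — and that all exponents $\nu_j$ are positive so the generating-function manipulations are legitimate, which is the content of the inequality $a_j > -1$ recalled after \eqref{eq:ResolutionDiviseurCanonique}.
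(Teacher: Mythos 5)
Your overall strategy is the paper's (Batyrev-style): pass from $E(D_J,z)$ to $E(D_J^{\circ},z)$ by inclusion--exclusion, use the orbit decomposition of the smooth variety $Y$, expand the rational factors as lattice-point generating series, and reassemble over the cones of $\Delta'$. But the concrete bookkeeping at your key step is wrong. First, the factor $\frac{z-z^{\nu_j}}{z^{\nu_j}-1}$ does not record lattice points $\lambda_j v_j$ with $\lambda_j\in\,]0,1[$ --- there are none, since $v_j$ is primitive; it equals $(z-1)\sum_{k\geq 1}z^{-k\nu_j}-1$, i.e.\ it records \emph{all} positive integer multiples of $v_j$. Second, and consequently, the intermediate identity you propose, $E_{st,P}(z)=\sum_{\tau\in\Delta'}\sum_{v\in\boite(\tau)\cap N}(z-1)^{n-\dim\tau}z^{\nu(v)}$, is false when the boxes are taken with respect to the primitive generators of the smooth fan $\Delta'$: for a smooth cone one has $\boite(\tau)\cap N=\emptyset$ and $\Box(\tau)\cap N=\{0\}$, so both of your displayed sums collapse to $E(Y,z)$, which is not $Spec_P^{geo}(z)$ in general (for $\ppit(1,1,a)$ the geometric spectrum has non-integral exponents). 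For the same reason you cannot ``apply Lemma \ref{lemma:SpecGeoBoxDimn} to the refinement'': its proof uses generators $b_i\in\partial P\cap N$ with $\nu(b_i)=1$, whereas the exceptional rays of $\Delta'$ have $\nu(v_j)=\nu_j\neq 1$ in general and need not meet $\partial P$ in a lattice point; your parenthetical ``refinement-invariance of the box formula'' claim breaks down at the same place.

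The repair is to drop the box formula and compare directly with the definition $Spec_P^{geo}(z)=(z-1)^{n}\sum_{v\in N}z^{-\nu(v)}$, which is what the paper does. After inclusion--exclusion, $E_{st,P}(z)=\sum_{J\subset I}E(D_J^{\circ},z)\prod_{j\in J}\frac{z-1}{z^{\nu_j}-1}$; since $\Delta'$ is smooth and simplicial, $E(D_J^{\circ},z)=(z-1)^{n-|J|}$ if the rays indexed by $J$ span a cone $\tau$ of $\Delta'$ and $0$ otherwise, while for such a cone $\sum_{v\in\stackrel{\circ}{\tau}\cap N}z^{-\nu(v)}=\prod_{j\in J}\frac{1}{z^{\nu_j}-1}$, using exactly the linearity of $\nu$ on $\tau$ and the positivity $\nu_j>0$ that you checked. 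Hence the term indexed by $J$ equals $(z-1)^{n}\sum_{v\in\stackrel{\circ}{\tau}\cap N}z^{-\nu(v)}$, and summing over all cones of the complete fan $\Delta'$, whose relative interiors partition $N_{\rit}$, yields $(z-1)^{n}\sum_{v\in N}z^{-\nu(v)}=Spec_P^{geo}(z)$; the independence of $\rho$ is then immediate, as you say.
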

\begin{proof} 
Using the notations of Section \ref{sec:FonctionsFiliformes},
we have $E(D_J^{\circ}, z)=\sum_{J' \subset J}(-1)^{|J|-|J'|}E(D_{J'},z)$
and
\begin{equation}\nonumber
E_{\st, P}(z)=\sum_{J\subset I}E( D_J^{\circ} , z)\prod_{j\in J}\frac{z-1}{z^{\nu_j}-1}
\end{equation}
as in \cite[Proof of Theorem 3.7]{Batyrev2}.
Let $\sigma$ be a smooth cone of $\Delta'$, the fan of $Y$, generated by $v_{i_1},\cdots , v_{i_r}$ and $v\in \stackrel{\circ}{\sigma}$: we have
$v=a_1 v_{i_1}+\cdots +a_r v_{i_r}$ for $a_1 ,\cdots ,a_r >0$ and
$\nu (v)=a_1 \nu (v_{i_1} )+\cdots +a_r \nu (v_{i_r})$. Thus
$$\sum_{v\in\stackrel{\circ}{\sigma}\cap N}z^{-\nu (v)}= 
\frac{1}{z^{\nu (v_{i_1} )}-1}\cdots \frac{1}{z^{\nu (v_{i_r} )}-1}.$$
With these two observations in mind, the proof of the proposition is similar to the one of \cite[Theorem 4.3]{Batyrev2}.
\end{proof}

\begin{remark}
Applying Poincar\'e duality to the smooth subvarieties $D_J$, we get once again the symmetry relation
$z^n \Spec^{\geo}_P (z^{-1})=\Spec^{\geo}_P (z)$ of Corollary \ref{coro:SymetrieSpecGeo}.
\end{remark}

\subsubsection{Third interpretation: twisted $\delta$-vector}

Let $P$ be a full dimensional simplicial lattice polytope in $N_{\rit}$ containing the origin in its interior. 
Following \cite{Stapledon}, 
we define
$$F^0_P (z)=\sum_{m\geq 0 }\sum_{v\in mP\cap N}z^{\nu (v)-\lceil \nu (v)\rceil +m}.$$
This is a twisted version of the Ehrhart series $F_P (z)$ defined in Section \ref{sec:Ehrhart}. 

\begin{proposition}\label{prop:SpecGeodelta0} 
We have $\Spec_{P}^{\geo}(z)=(1-z)^{n+1} F^0_P (z)$.
 \end{proposition}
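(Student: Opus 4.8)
The plan is to relate the twisted Ehrhart series $F^0_P(z)$ to the sum over cones in Lemma~\ref{lemma:SpecGeoBoxDimn} by decomposing $N$ according to the cones of $\Delta:=\Delta_P$. First I would observe that every lattice point $v\in N$ lies in the relative interior of a unique cone $\sigma\in\Delta$ (including $\sigma=\{0\}$), so $F^0_P(z)=\sum_{\sigma\in\Delta}\sum_{m\geq 0}\sum_{v\in mP\cap\mathring{\sigma}\cap N}z^{\nu(v)-\lceil\nu(v)\rceil+m}$. The constraint $v\in mP$ is exactly $\nu(v)\leq m$, so for fixed $v\in\mathring\sigma\cap N$ the inner geometric sum over $m\geq\lceil\nu(v)\rceil$ gives $\sum_{m\geq\lceil\nu(v)\rceil}z^{m}=\frac{z^{\lceil\nu(v)\rceil}}{1-z}$, and hence each $v$ contributes $\frac{z^{\nu(v)}}{1-z}$ after the $z^{-\lceil\nu(v)\rceil}$ cancels. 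This already explains one factor of $(1-z)$; it remains to account for the other $n$ factors.

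Next I would write $\sum_{v\in\mathring\sigma\cap N}z^{\nu(v)}$ in closed form. If $\sigma\in\Delta(r)$ is generated by $b_1,\dots,b_r$ (vertices of $P$, so $\nu(b_i)=1$), then using the three-case decomposition of lattice points in $\mathring\sigma$ from the proof of Lemma~\ref{lemma:SpecGeoBoxDimn} — or more directly, that $\mathring\sigma\cap N$ is a disjoint union over $w\in\boite(\sigma)\cap N$ and over $\lambda\in\zit_{\geq 0}^r$ (write $v=w+\sum\lambda_i b_i$ with $w$ the "fractional part" in $\Box(\sigma)$, but using the open box) — one gets, since $\nu$ is linear on $\sigma$ and $\nu(b_i)=1$,
\begin{equation}\nonumber
\sum_{v\in\mathring\sigma\cap N}z^{\nu(v)}=\Bigl(\sum_{w\in\boite(\sigma)\cap N}z^{\nu(w)}\Bigr)\cdot\frac{z^{r}}{(1-z)^{r}}\cdot(-1)^{r},
\end{equation}
or more carefully, matching signs, $\sum_{v\in\mathring\sigma\cap N}z^{\nu(v)}=\dfrac{\sum_{w\in\boite(\sigma)\cap N}z^{\nu(w)}}{(z-1)^{r}}$ up to the appropriate power of $z$; the cleanest route is to feed the identity $\sum_{v\in\mathring\sigma\cap N}z^{-\nu(v)}=(z-1)^{-r}\sum_{w\in\boite(\sigma)\cap N}z^{r-\nu(w)}$ (equation~(\ref{eq:intermediaireBox}) rearranged, or its variant) and then use the symmetry $\nu(w)\mapsto r-\nu(w)$ on $\boite(\sigma)$ noted there. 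Assembling, $F^0_P(z)=\dfrac{1}{1-z}\sum_{\sigma\in\Delta}\sum_{w\in\boite(\sigma)\cap N}\dfrac{z^{\nu(w)}}{(1-z)^{\dim\sigma}}$ (up to bookkeeping of signs via $(z-1)$ versus $(1-z)$), so that $(1-z)^{n+1}F^0_P(z)=\sum_{\sigma\in\Delta}(1-z)^{n-\dim\sigma}\sum_{w\in\boite(\sigma)\cap N}z^{\nu(w)}\cdot(1-z)^{?}$ — and comparing with Proposition~\ref{prop:SpecGeohFunction} (noting $h_\sigma(z)=\sum_{\sigma\subseteq\tau}(z-1)^{n-\dim\tau}$ arises precisely when one refrains from summing the $\lambda_i$ all the way and instead stratifies by which coordinates vanish) identifies the result with $Spec_P^{geo}(z)$.

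The main obstacle I anticipate is the sign and exponent bookkeeping: keeping straight whether factors are $(z-1)$ or $(1-z)$, correctly handling the ceiling function $\lceil\nu(v)\rceil$ so that the twist $z^{\nu(v)-\lceil\nu(v)\rceil}$ combines with the geometric series starting at $m=\lceil\nu(v)\rceil$ rather than $m=0$ (the case $\nu(v)\in\zit$, e.g. $v$ on a ray, needs care since then $\lceil\nu(v)\rceil=\nu(v)$), and ensuring the stratification of $\mathring\sigma\cap N$ by its faces reproduces exactly $h_\sigma(z)$ rather than a single power of $(z-1)$. A clean way to finesse all of this is to avoid re-deriving the cone sums and instead invoke Lemma~\ref{lemma:SpecGeoBoxDimn} and Proposition~\ref{prop:SpecGeohFunction} directly: show $(1-z)^{n+1}F^0_P(z)$ equals the right-hand side of (\ref{eq:DescriptionSpecGeon}) by the single computation that for each $v\in N$, summing $z^{\nu(v)-\lceil\nu(v)\rceil+m}$ over $m\geq\nu(v)$ (equivalently $v\in mP$) yields $z^{\nu(v)}/(1-z)$, then reorganizing the remaining sum over $N$ by the cone containing $v$ in its relative interior — which is exactly what Lemma~\ref{lemma:SpecGeoBoxDimn}'s proof does for $Spec_P^{geo}(z)=(z-1)^n\sum_{v\in N}z^{-\nu(v)}$, run with the substitution $z\leftrightarrow z^{-1}$ together with Corollary~\ref{coro:SymetrieSpecGeo}.
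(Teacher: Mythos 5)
Your ``clean way'' at the end is exactly the paper's proof: the single geometric-series computation $\sum_{m\geq\lceil\nu(v)\rceil}z^{\nu(v)-\lceil\nu(v)\rceil+m}=z^{\nu(v)}/(1-z)$ turns $(1-z)^{n+1}F^0_P(z)$ into $(1-z)^n\sum_{v\in N}z^{\nu(v)}=z^nSpec_P^{geo}(z^{-1})$ by the very definition of $Spec_P^{geo}$, and Corollary~\ref{coro:SymetrieSpecGeo} finishes it (no cone-by-cone reorganization is needed at that point). The preliminary cone-decomposition route is superfluous and slightly off as stated (writing $\mathring\sigma\cap N$ as $\boite(\sigma)\cap N+\zit^r_{\geq0}\{b_i\}$ misses points such as $b_1+\cdots+b_r$), but since you discard it in favor of the direct argument, the proposal is correct and essentially coincides with the paper's proof.
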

\begin{proof} 
Notice first that $v\in mP$ if and only if $\nu (v)\leq m$: this follows from the presentation (\ref{eq:PresentationPolytope}) and the definition of the Newton function $\nu$. We thus have  
$$F^0_P (z^{-1})
 =\sum_{m\geq 0 }\sum_{\nu (v)\leq m}z^{-\nu (v)+\lceil \nu (v)\rceil -m}
 =\sum_{v\in N}\sum_{\lceil \nu (v)\rceil \leq m}z^{-\nu (v)+\lceil \nu (v)\rceil -m}=
 \frac{1}{1-z^{-1}} \sum_{v\in N}z^{-\nu (v)}$$
 and this gives
 $(z-1)^n (1-z^{-1})F^0_P (z^{-1})=\Spec_{P}^{\geo}(z)$.
Thus
$$(1-z)^{n+1} F^0_P (z)=z^n \Spec_{P}^{\geo}(z^{-1})=\Spec_{P}^{\geo}(z)$$
where the last equality follows from Corollary \ref{coro:SymetrieSpecGeo}. 
\end{proof}

\begin{corollary}\label{coro:SpecGeo01} 
The part of the geometric spectrum contained in $[0,1[$ is the sequence 
$\nu (v),\  v\in \Inter P\cap N$. In particular, the multiplicity of $0$ in the geometric spectrum is equal to one.
Moreover the multiplicity of $1$ in $\Spec^{\geo}_{P}$ is equal to $\card (\partial P\cap N )-n$.
\end{corollary}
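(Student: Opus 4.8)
The plan is to extract both statements from the identity $Spec_P^{geo}(z)=(1-z)^{n+1}F_P^0(z)$ of Proposition~\ref{prop:SpecGeodelta0}, by keeping track of where in $[0,\infty)$ each monomial of $F_P^0$ sits. Recall from the proof of that proposition that $v\in mP$ if and only if $\nu(v)\le m$, and recall also that $\nu(v)\ge 0$ with equality exactly when $v=0$ (since $P$ is bounded with the origin in its interior). Hence in $F_P^0(z)=\sum_{m\ge 0}\sum_{v\in mP\cap N}z^{\nu(v)-\lceil\nu(v)\rceil+m}$ the exponent $e(m,v):=\nu(v)-\lceil\nu(v)\rceil+m$ of the $(m,v)$-term lies in $(m-1,m]$, so it is $\ge 0$, it equals $0$ only for $m=0$ (which forces $v=0$), and $e(m,v)-\nu(v)=m-\lceil\nu(v)\rceil\in\zit$, so $e(m,v)\in\zit$ if and only if $\nu(v)\in\zit$.

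First I would identify the part of $F_P^0$ with exponents in $[0,1)$. Since $e(m,v)>1$ whenever $m\ge 2$, only $m\in\{0,1\}$ can contribute. The case $m=0$ contributes the monomial $z^0$. For $m=1$ and $v\in P\cap N$ one has $e(1,v)=\nu(v)-\lceil\nu(v)\rceil+1\le 1$, with strict inequality exactly when $\nu(v)\notin\zit$; since $0\le\nu(v)\le 1$ on $P\cap N$, this forces $\nu(v)\in\,(0,1)$, i.e. $v\in\Inter P\cap N$ and $v\neq 0$, and then $e(1,v)=\nu(v)$. Thus the part of $F_P^0$ lying in $[0,1)$ is $\sum_{v\in\Inter P\cap N}z^{\nu(v)}$. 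Now $(1-z)^{n+1}$ has only non-negative integer exponents, so multiplying $F_P^0$ by it shifts exponents only by non-negative integers; consequently the part of $(1-z)^{n+1}F_P^0$ lying in $[0,1)$ coincides with the part of $F_P^0$ lying in $[0,1)$. This is the first assertion, and since $\nu(v)=0$ if and only if $v=0$, the multiplicity of $0$ is $1$.

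For the multiplicity of $1$ I would compute the coefficient of $z^1$ in $(1-z)^{n+1}F_P^0(z)$. Writing $(1-z)^{n+1}=1-(n+1)z+\cdots$, only the terms $1$ and $-(n+1)z$ can contribute to $z^1$, so the coefficient of $z^1$ equals $[z^1]F_P^0-(n+1)\,[z^0]F_P^0$. One has $[z^0]F_P^0=1$. For $[z^1]F_P^0$: a monomial with integer exponent $1$ must have $\nu(v)\in\zit$ by the remark above, and then $e(m,v)=\nu(v)-\nu(v)+m=m$, so $m=1$, $v\in P\cap N$ and $\nu(v)\in\{0,1\}$; that is, $v=0$ or $v\in\partial P\cap N$. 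Hence $[z^1]F_P^0=1+\card(\partial P\cap N)$, and the multiplicity of $1$ in $Spec_P^{geo}$ equals $1+\card(\partial P\cap N)-(n+1)=\card(\partial P\cap N)-n$.

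I do not anticipate a real obstacle; the only thing requiring care is the bookkeeping of which pairs $(m,v)$ land in a prescribed range of exponents — notably that $v=0$ reappears for every $m$ — together with the elementary but crucial observation that, because $(1-z)^{n+1}$ has non-negative integer exponents, passing from $F_P^0$ to $(1-z)^{n+1}F_P^0$ leaves the part with exponents in $[0,1)$ untouched and only shifts higher-degree terms further right.
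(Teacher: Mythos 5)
Your proposal is correct and follows essentially the same route as the paper: the paper's proof is precisely a scrutiny of the coefficients of $z^{a}$ for $a\leq 1$ in the identity $Spec_P^{geo}(z)=(1-z)^{n+1}F_P^0(z)$ of Proposition \ref{prop:SpecGeodelta0}, which is exactly the bookkeeping you carry out. Your explicit tracking of the pairs $(m,v)$ and of the fact that multiplication by $(1-z)^{n+1}$ only shifts exponents by non-negative integers just makes the paper's one-line argument fully detailed.
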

\begin{proof}
Scrutinization of the coefficients of $z^{a}$ for $a\leq 1$ in the formula of Proposition \ref{prop:SpecGeodelta0} (see also \cite[Lemma 3.13]{Stapledon}).
\end{proof}

If $P$ is reflexive, we have the following link between the $\delta$-vector of $P$ from Section \ref{sec:Ehrhart}
and its geometric spectrum, see also \cite{MustataPayne}:

\begin{corollary}\label{coro:SpecGeoReflexif}
Let $P$ be a reflexive full dimensional simplicial lattice polytope in $N_{\rit}$ containing the origin in its interior. 
Then $\Spec_{P}^{\geo}(z)=\delta_0 +\delta_1 z +\cdots +\delta_n z^n$
where $\delta =(\delta_0 ,\cdots ,\delta_n )$ is the $\delta$-vector of $P$.
\end{corollary}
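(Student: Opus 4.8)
The plan is to derive Corollary \ref{coro:SpecGeoReflexif} directly from Proposition \ref{prop:SpecGeodelta0}, which gives $Spec_{P}^{geo}(z)=(1-z)^{n+1}F^0_P(z)$, together with the defining property of a reflexive polytope. The key point is that for a reflexive polytope the twisted Ehrhart series $F^0_P(z)$ collapses to the ordinary Ehrhart series $F_P(z)$ of equation (\ref{eq:serie Ehrhart}), because the twisting factor $z^{\nu(v)-\lceil\nu(v)\rceil}$ is trivial on the relevant lattice points.

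First I would recall that $P$ is reflexive exactly when its polar polytope $P^{\circ}$ is a lattice polytope, equivalently (by the hyperplane presentation (\ref{eq:PresentationFacette}), (\ref{eq:PresentationPolytope})) when every $u_F \in M$ rather than merely $M_{\qit}$. This means that $\nu_F(v)=\langle u_F,v\rangle \in \zit$ for every $v\in N$ and every facet $F$, so $\nu(v)=\max_F \nu_F(v)\in\zit$ for all $v\in N$. Consequently $\lceil\nu(v)\rceil=\nu(v)$ for every lattice point, and the definition $F^0_P(z)=\sum_{m\geq 0}\sum_{v\in mP\cap N}z^{\nu(v)-\lceil\nu(v)\rceil+m}$ simplifies to $\sum_{m\geq 0}\sum_{v\in mP\cap N}z^{m}=\sum_{m\geq 0}Ehr_P(m)z^m = F_P(z)$.

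Then I would combine this with Proposition \ref{prop:SpecGeodelta0}:
\begin{equation}\nonumber
Spec_{P}^{geo}(z)=(1-z)^{n+1}F^0_P(z)=(1-z)^{n+1}F_P(z)=\delta_0+\delta_1 z+\cdots+\delta_n z^n,
\end{equation}
where the last equality is exactly the definition (\ref{eq:serie Ehrhart}) of the $\delta$-vector of $P$. This gives the stated formula. I might also remark for consistency that this is compatible with Corollary \ref{coro:SymetrieSpecGeo} and with the reflexivity characterization $\delta_i=\delta_{n-i}$ recalled in section \ref{sec:Ehrhart}.

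The only real subtlety — and the step I would treat most carefully — is the identification $\lceil\nu(v)\rceil=\nu(v)$ for all $v\in N$; this is precisely where reflexivity (integrality of the $u_F$) is used, and without it the argument fails. Everything else is a substitution into results already established in the excerpt, so there is no genuine obstacle beyond making that integrality point explicit.
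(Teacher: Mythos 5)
Your proof is correct and follows essentially the same route as the paper: reflexivity forces the facet normals $u_F$ to lie in $M$, hence $\nu(v)\in\zit$ for all $v\in N$, so $F^0_P(z)=F_P(z)$, and the conclusion follows from Proposition \ref{prop:SpecGeodelta0} together with formula (\ref{eq:serie Ehrhart}). Your explicit remark that $\lceil\nu(v)\rceil=\nu(v)$ is exactly the point the paper's proof uses implicitly, so there is nothing to add.
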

\begin{proof} 
By (\ref{eq:CorrVerticePolar}), we have $\nu (v)\in\nit$ for all $v\in N$ because $P$ is reflexive and
we get $F^0_P (z)=F_P (z)$
where $F_P (z)$ is the Ehrhart series of $P$
of Section \ref{sec:Ehrhart} because
$$F_P (z)=\sum_{m\geq 0 }\card (mP\cap N) z^{m}=\sum_{m\geq 0 }\sum_{v\in mP\cap N}z^{m}.$$
By Proposition \ref{prop:SpecGeodelta0} we thus have
$\Spec_{P}^{\geo}(z)=(1-z)^{n+1}F_P (z)$
and we get the assertion using formula (\ref{eq:serie Ehrhart}). 
 \end{proof}

\section{Algebraic spectrum of a polytope}
\label{sec:SpecAlgebriquePolytope}

Singularity theory associates to a (tame) Laurent polynomial function $f$ a {\em spectrum at infinity}, see \cite{Sab1}. We recall its definition and its main properties in Section \ref{sec:SpecPolLaurent}. We can shift this notion
to the Newton polytope  $P$ of $f$ and we get in this way the {\em algebraic spectrum} of $P$. In order to motivate 
the next sections, we describe the Givental-Hori-Vafa models \cite{Giv}, \cite{HV} which are the expected mirror partners of toric varieties.

\subsection{Tameness (Kouchnirenko's framework)}
\label{sec:K}

We briefly recall the setting of \cite{K}. Let $f: (\cit^*)^n \rightarrow \cit$ be a Laurent polynomial,
$f (\underline{u})=\sum_{a\in\zit^n} c_a \underline{u}^a$ where $\underline{u}^a := u_1^{a_1}\cdots u_n^{a_n}$.
The Newton polytope $P$ of $f$ is the convex hull of the multi-indices $a$ such that $c_a \neq 0$.  
We say that $f$ is {\em convenient} if $P$ contains the origin in its interior,
{\em nondegenerate} if, for any face $F$ of $P$, the system
$$u_1 \frac{\partial f_F}{\partial u_1}=\cdots =u_n \frac{\partial f_F}{\partial u_n}=0$$
has no solution on $(\cit^* )^n$ where $f_F (\underline{u})=\sum_{a\in F\cap P} c_a \underline{u}^a$, 
the sum being taken over the multi-indices $a$ such that $c_a \neq 0$. 
A convenient and nondegenerate Laurent polynomial $f$ has only isolated critical points
and its global Milnor number $\mu_f$ (the number of critical points with multiplicities) is equal to the normalized volume $n! \vol (P)$, see \cite[Th\'eor\`eme III]{K}. 
Moreover, $f$ is tame in the sense that the set outside which $f$ 
is a locally trivial fibration is made from critical values of $f$, and these critical values belong to this set only because of the critical points at finite distance.

\subsection{Givental-Hori-Vafa models and mirror symmetry}
 \label{sec:ConstMiroirGen}

Let $N=\zit^n$ and let $M$ be the dual lattice. Let $\Delta$ be a complete and simplicial fan and let $v_1 ,\cdots ,v_r$ be the primitive generators of its rays. For the sake of simplicity, we assume here that the $v_i$'s generate the lattice $N$. Consider the exact sequence

\begin{equation}\nonumber
0\longrightarrow \zit^{r-n}\stackrel{\psi}{\longrightarrow} \zit^r \stackrel{\varphi}{\longrightarrow} \zit^n \longrightarrow 0
\end{equation}
where $\varphi (e_i) =v_i$ for $i=1,\cdots ,r$ ($(e_i )$ denotes the canonical basis of $\zit^r$) and $\psi$ describes the relations between the $v_i$'s.
Applying $Hom_{\zit} (--, \cit^* )$ to this exact sequence, we get 
\begin{equation}\nonumber
1\longrightarrow (\cit^*)^{n}\stackrel{}{\longrightarrow} (\cit^* )^r 
\stackrel{\pi}{\longrightarrow} (\cit^* )^{r-n} \longrightarrow 1
\end{equation}
where
\begin{equation}\label{eq:defpi}
\pi  (u_1 ,\cdots , u_r )= (q_1 ,\cdots , q_{r-n})=
(u_1^{a_{1,1}}\cdots u_r^{a_{r,1}},\cdots , u_1^{a_{1, r-n}}\cdots u_r^{a_{r, r-n}})
\end{equation}
and the integers $a_{i,j}$ satisfy  
$\sum_{j=1}^r a_{j, i} v_j =0$ for $i=1,\cdots , r-n$. The {\em Givental-Hori-Vafa model} of the toric variety $X_{\Delta}$ is the function  
\begin{equation}\nonumber
u_1 +\cdots + u_r \ \mbox{restricted to}\ U:= \pi^{-1}(q_1 ,\cdots , q_{r-n})
\end{equation}
We will denote it by $f_{X_{\Delta}}$. If $\Delta$ contains a smooth cone, the function $f_{X_{\Delta}}$ is easily described:

\begin{proposition}\label{prop:HVLaurent} Assume that $(v_{1},\cdots , v_{n})$ is the canonical basis of $N$.
Then $f_{X_{\Delta}}$ is the Laurent polynomial defined on $(\cit^*)^n$ by  
\begin{equation}\nonumber
f_{X_{\Delta}} (u_1 ,\cdots , u_n )= u_1 +\cdots + u_n + \sum_{i=n+1}^r q_i u_1^{v^i_1}\cdots u_n^{v^i_n}
\end{equation}
if $v_i = (v_1^i ,\cdots , v_n^i )\in\zit^{n}$ for $i=n+1,\cdots ,r$. 
\end{proposition}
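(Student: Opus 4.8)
The plan is to unwind the definition of the Givental–Hori–Vafa model in the case where the first $n$ rays of $\Delta$ form the canonical basis of $N$, and to show that restricting $u_1+\cdots+u_r$ to the fiber $U=\pi^{-1}(q_1,\ldots,q_{r-n})$ amounts to eliminating the variables $u_{n+1},\ldots,u_r$ in favor of $u_1,\ldots,u_n$ using the equations $\pi(u_1,\ldots,u_r)=(q_1,\ldots,q_{r-n})$.

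First I would fix the exact sequence $0\to\zit^{r-n}\xrightarrow{\psi}\zit^r\xrightarrow{\varphi}\zit^n\to 0$ with $\varphi(e_i)=v_i$. Since $v_1,\ldots,v_n$ is the canonical basis of $N=\zit^n$, for each $i\in\{n+1,\ldots,r\}$ we have the relation $v_i=\sum_{k=1}^n v_k^i\, v_k$, i.e. $v_i-\sum_{k=1}^n v_k^i\,v_k=0$; hence the vectors $f_i:=e_i-\sum_{k=1}^n v_k^i\,e_k\in\zit^r$ for $i=n+1,\ldots,r$ lie in $\ker\varphi=\im\psi$, and they are visibly a basis of $\ker\varphi$ (their last $r-n$ coordinates form the identity matrix). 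So we may take these $f_i$ as the image of the standard basis under $\psi$, which means the integers $a_{j,i}$ appearing in $\sum_{j=1}^r a_{j,i}v_j=0$ can be chosen as $a_{j,i}=\delta_{j,n+i}-v_j^{\,n+i}$ (indices arranged so that the $i$-th relation is $f_{n+i}$). With this choice the map $\pi$ of (\ref{eq:defpi}) becomes, in its $i$-th component, $q_i=u_{n+i}\,\big(u_1^{v_1^{n+i}}\cdots u_n^{v_n^{n+i}}\big)^{-1}$, after re-indexing; equivalently $u_i=q_i\,u_1^{v_1^i}\cdots u_n^{v_n^i}$ for $i=n+1,\ldots,r$.

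Next I would use these relations to parametrize $U$. The map $(\cit^*)^n\to U$ sending $(u_1,\ldots,u_n)$ to $\big(u_1,\ldots,u_n, q_{n+1}u^{v_{n+1}},\ldots,q_r u^{v_r}\big)$ (writing $u^{v_i}:=u_1^{v_1^i}\cdots u_n^{v_n^i}$) is an isomorphism onto the fiber $\pi^{-1}(q_{1},\ldots,q_{r-n})$: it is a section of the torus inclusion $(\cit^*)^n\hookrightarrow(\cit^*)^r$ over $U$, which by the exactness of $1\to(\cit^*)^n\to(\cit^*)^r\xrightarrow{\pi}(\cit^*)^{r-n}\to 1$ is a torsor, and the displayed formula is well-defined on all of $(\cit^*)^n$. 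Pulling back $u_1+\cdots+u_r$ along this parametrization gives exactly $u_1+\cdots+u_n+\sum_{i=n+1}^r q_i\,u_1^{v_1^i}\cdots u_n^{v_n^i}$, which is the asserted Laurent polynomial $f_{X_\Delta}(u_1,\ldots,u_n)$.

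The main obstacle — really the only subtle point — is the bookkeeping of indices and signs in choosing the matrix $(a_{j,i})$: one must check that a specific integral splitting of $\ker\varphi$ exists and that with that splitting $\pi$ has the clean monomial form above, so that solving $\pi=q$ expresses each $u_i$ ($i>n$) as $q_{i}u^{v_i}$ rather than as some less transparent monomial. Once the splitting $f_i=e_i-\sum_k v_k^i e_k$ is written down this is immediate, since the $(r-n)\times(r-n)$ block of $(a_{j,i})$ on the last coordinates is the identity; but it is worth stating explicitly that the Givental–Hori–Vafa model is independent of the choice of $a_{j,i}$ up to the torus isomorphism, so that any valid choice yields the same $f_{X_\Delta}$ up to relabeling the $q_i$'s. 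Everything else is a direct substitution.
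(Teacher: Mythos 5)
Your argument is correct and is essentially the paper's proof: the paper simply notes the relations $v_i=\sum_{j=1}^n v_j^i v_j$ for $i>n$ and invokes the presentation (\ref{eq:defpi}) of $\pi$, which is exactly the substitution $u_i=q_i\,u_1^{v_1^i}\cdots u_n^{v_n^i}$ you carry out, just with your explicit choice of splitting of $\ker\varphi$ and the fiber parametrization spelled out in detail.
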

\begin{proof}
We have $v_i = \sum_{j=1}^{n}v_j^i v_j$ for $i=n+1,\cdots ,r$ and the result follows from (\ref{eq:defpi}). 
\end{proof}

Above a convenient and nondegenerate Laurent polynomial $f$ we make grow a differential system 
(see \cite{DoSa1} and also \cite{DoMa} for explicit computations on weighted projective spaces) 
and we say that $f$ is a mirror partner of a variety $X$ if this differential system is isomorphic to the one associated with the (small quantum, orbifold) cohomology of $X$ (see \cite{D1}, \cite{RS}).
It is expected that the Givental-Hori-Vafa model $f_{X_{\Delta}}$ provides a mirror partner of the toric variety $X_{\Delta}$. 

If $f$ is the mirror partner of a smooth toric variety $X$  the following properties are in particular expected (non-exhaustive list):
\begin{itemize}
\item the Milnor number of $f$ is equal to the rank of the cohomology of $X$,
\item the spectrum at infinity of $f$ (see Section \ref{sec:SpecPolLaurent} below) is equal to half of the degrees of the cohomology groups of $X$,
\item multiplication by $f$ on its Jacobi ring yields the cup-product by $c_1 (X)$ on the cohomology algebra of $X$.
\end{itemize}
In particular, we have to restrict to (weak) Fano varieties: the rank of the cohomology of $X$ is the number of maximal cones while the Milnor number of $f$ is the normalized volume of its Newton polytope by \cite[Th\'eor\`eme III]{K}. See Example \ref{ex:SurfacesHirz} for a picture of the situation.

 In the singular simplicial case ({\em i.e} $X$ is an orbifold), cohomology should be replaced by orbifold cohomology, see Section \ref{sec:Comparaison}: in this situation, the rank of the orbifold cohomology is not a number of cones but a normalized volume, and the cohomology degrees are the orbifold degrees. 

\subsection{The spectrum at infinity of a tame Laurent polynomial}
\label{sec:SpecPolLaurent}

We assume in this section that $f$ is a convenient and nondegenerate Laurent polynomial, defined on  $U:=(\cit^{*})^n$. Let  $\mu$ be its global Milnor number. We use in this section the notations of \cite[2.c]{DoSa1}. 
Let $G$ be the Fourier-Laplace transform of the Gauss-Manin system of $f$, let $G_{0}$ be its Brieskorn lattice and let $V_{\bullet}$ be the $V$-filtration of $G$ at infinity, that is along $\theta^{-1}=0$. 
Because $f$ is convenient and nondegenerate, $G_0$ is a free $\cit [\theta ]$-module of rank $\mu$ and $G=\cit [\theta ,\theta^{-1}]\otimes G_0$, see \cite[Remark 4.8]{DoSa1}. 

From these data we get by projection a $V$-filtration on 
the $\mu$-dimensional vector space $\Omega_{f}:=\Omega^n (U)/df\wedge \Omega^{n-1}(U)=G_0 / \theta G_0$.
The {\em spectrum at infinity}  of $f$ is the spectrum of this filtration, 
that is the (ordered) sequence 
$\alpha_{1}, \alpha_{2},\cdots , \alpha_{\mu}$
of rational numbers with the following property: the frequency of $\alpha$ in the sequence is 
equal to $\dim_{\cit}\Gr^{V}_{\alpha}\Omega_{f}$. We will denote it by $\Spec_f$
and we will write
$\Spec_f (z) =\sum_{i=1}^{\mu}z^{\alpha_{i}}$. By \cite{Sab1} we have $\alpha_{i}\geq 0$ for all $i$ and $\Spec_f (z)=z^{n} \Spec_f (z^{-1})$.

If $f$ is convenient and nondegenerate, its spectrum at infinity can be computed using the 
Newton function of its Newton polytope: let us define
the Newton filtration $\mathcal{N}_{\bullet}$ on $\Omega^n (U)$ by
$$
\mathcal{N}_{\alpha}\Omega^{n}(U)=\{ \sum_{c\in N} a_c \omega_c ,\   \nu (\omega_c)\leq\alpha \ \mbox{for all}\ c\ \mbox{such that}\ a_c\neq 0\}
$$ 
where $\nu (\omega_c):=\nu (c)$ if $\omega_c =u_1^{c_1}\cdots u_n^{c_n} \frac{du_1\wedge\cdots\wedge du_n}{u_1\cdots u_n}$ and 
$ c=(c_1 ,\cdots , c_n )\in N$.
This filtration induces a filtration on $\Omega_f$ by projection and by \cite[Corollary 4.13]{DoSa1} the spectrum at infinity of $f$ is equal to the spectrum of this 
filtration.

\subsection{The algebraic spectrum of a polytope}
\label{sec:DefAlgSpec}

We define the algebraic spectrum $\Spec_P^{\alg}$ of a simplicial full dimensional lattice polytope $P$ 
 containing the origin in its interior to be the spectrum at infinity of the Laurent polynomial
 $f_P (\underline{u})=\sum_{b\in {\cal V}(P)}\underline{u}^b$ where ${\cal V}(P)$ denotes the set of the vertices of $P$.
Notice that $f_P$ is a convenient and nondegenerate Laurent polynomial and that its Milnor number is $\mu_{f_{P}}=\mu_{P}$: indeed, $f_P$ is convenient by definition because $P$ contains the origin in its interior and it is nondegenerate because of the simpliciality assumption. The assertion about the Milnor number then follows from \cite{K}. We will identify the algebraic spectrum with its generating function
$\Spec_P^{\alg} (z):=\sum_{i=1}^{\mu}z^{\alpha_{i}}$.

\begin{proposition}\label{prop:SpectreAlg01}
Let $P$ be a full dimensional simplicial lattice polytope in $N_{\rit}$ containing the origin in its interior. Then the part of the algebraic spectrum contained in $[0,1[$ is the sequence $\nu (v),\  v\in \Inter P\cap N$
where $\nu$ is the Newton function of $P$.
In particular, the multiplicity of $0$ in $\Spec_P^{\alg}$ is equal to one. 
 \end{proposition}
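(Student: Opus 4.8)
The plan is to compute the part of the algebraic spectrum lying in $[0,1[$ directly from the Newton filtration description recalled in section \ref{sec:SpecPolLaurent}, and to identify it with $\{\nu(v):\ v\in\Inter P\cap N\}$ by exhibiting an explicit basis of the low-degree part of $\Omega_f$. Recall that the spectrum at infinity of $f:=f_P$ is the spectrum of the filtration induced on $\Omega_f=\Omega^n(U)/df\wedge\Omega^{n-1}(U)$ by the Newton filtration $\mathcal{N}_\bullet$ on $\Omega^n(U)$, where $\nu(\omega_c)=\nu(c)$ for $\omega_c=\underline{u}^c\,\frac{du_1\wedge\cdots\wedge du_n}{u_1\cdots u_n}$. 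So the multiplicity of a value $\alpha\in[0,1[$ in $Spec_P^{alg}$ equals $\dim_{\cit}\mathrm{gr}_{\mathcal N}^\alpha\Omega_f$ restricted to $\alpha<1$.

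First I would observe that for $\alpha<1$ the projection $\mathcal{N}_\alpha\Omega^n(U)\to\Omega_f$ is injective. Indeed, $df\wedge\Omega^{n-1}(U)$ is spanned by forms $u_k\frac{\partial f}{\partial u_k}\,\omega_c$ (up to the logarithmic normalization), and since $f=\sum_{b\in{\cal V}(P)}\underline u^b$ with each vertex $b$ satisfying $\nu(b)=1$, every such generator is a combination of monomials $\omega_{c'}$ with $\nu(c')\geq \nu(c)+1>1$ once $\nu(c)\geq 0$ — more precisely its $\nu$-degree is strictly larger than any value $<1$. Hence $df\wedge\Omega^{n-1}(U)$ meets $\mathcal{N}_{<1}\Omega^n(U)$ only in $0$, so $\mathrm{gr}_{\mathcal N}^\alpha\Omega_f\cong \mathrm{gr}_{\mathcal N}^\alpha\Omega^n(U)$ for $\alpha<1$. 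Next, a monomial $\omega_c$ has $\nu(c)<1$ precisely when $\langle u_F,c\rangle<1$ for every facet $F$, i.e. when $c$ lies in the (topological) interior of $P$, by the hyperplane presentation (\ref{eq:PresentationPolytope}) and the definition of $\nu$. Therefore $\mathrm{gr}_{\mathcal N}^\alpha\Omega^n(U)$, for $\alpha<1$, has a basis given by the classes of $\omega_v$ with $v\in\Inter P\cap N$ and $\nu(v)=\alpha$, which shows that the part of $Spec_P^{alg}$ in $[0,1[$ is exactly the sequence $\nu(v)$, $v\in\Inter P\cap N$. The final claim about the multiplicity of $0$ is then immediate: $\nu(v)=0$ forces $\langle u_F,v\rangle\leq 0$ for all facets $F$, and since the origin lies in the interior with $P$ full dimensional, the only such lattice point is $v=0$, giving multiplicity one.

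The step I expect to be the main obstacle is the injectivity claim, i.e. controlling $df\wedge\Omega^{n-1}(U)$ in low $\nu$-degree. One must be careful that the Newton function $\nu$ is only piecewise linear and subadditive rather than additive, so that $\nu(c+b)\leq\nu(c)+\nu(b)$ in general with possible strict inequality; the point needed is the opposite kind of estimate, namely that multiplying a monomial $\omega_c$ by $u_k\partial_{u_k}$ applied to a vertex monomial $\underline u^b$ raises the $\nu$-degree past $1$. This follows because $\nu(b)=1$ for every vertex $b$ and $\nu(c)\geq 0$ for the monomials occurring, together with the fact that $\nu$ is nonnegative on the cone over $P$ and attains a minimum at the origin — but making this rigorous uniformly requires invoking the nondegeneracy of $f_P$ and the standard computation of the Newton filtration on $\Omega_f$ from \cite[Corollary 4.13]{DoSa1}, rather than arguing by hand. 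Alternatively, and more cleanly, one can simply quote that result: it identifies the $V$-filtration spectrum with the Newton-filtration spectrum, and then the description of $\mathrm{gr}_{\mathcal N}^{<1}$ above is a purely combinatorial statement about lattice points of $P$. I would phrase the proof this way, citing \cite[Corollary 4.13]{DoSa1} for the structural input and reducing everything else to the hyperplane presentation (\ref{eq:PresentationPolytope}) of $P$; one may also note the parallel with corollary \ref{coro:SpecGeo01}, whose proof extracts the same information from proposition \ref{prop:SpecGeodelta0}.
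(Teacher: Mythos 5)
Your overall strategy — reading off the part of the spectrum lying in $[0,1[$ from the Newton filtration on $\Omega_{f_P}$ and identifying $\{v\in N,\ \nu(v)<1\}$ with $\Inter P\cap N$ via the presentation (\ref{eq:PresentationPolytope}) — is the same as the paper's, which disposes of the proposition by quoting \cite[Lemma 4.6]{DoSa1}, used as in \cite[Example 4.17]{DoSa1}. The combinatorial halves of your argument (that $\nu(v)<1$ iff $v\in\Inter P$, and that $\nu(v)=0$ forces $v=0$) are correct. The gap is exactly at the step you flag: the injectivity claim. Your degree estimate is false as stated. The generators of $df_P\wedge\Omega^{n-1}(U)$ are, up to the logarithmic normalization, $u_k\frac{\partial f_P}{\partial u_k}\,\omega_c=\sum_{b\in{\cal V}(P)}\langle e_k^*,b\rangle\,\omega_{b+c}$, and since $\nu$ is only subadditive the exponents $b+c$ can leave the cone containing $c$, so the monomials $\omega_{b+c}$ can have $\nu$-value far below $\nu(c)+1$ (take $c=-b$: the constant monomial $\omega_0$, of Newton degree $0$, occurs). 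Worse, even with each generator under control, a combination $\sum_k g_k\,u_k\frac{\partial f_P}{\partial u_k}$ can have its top Newton-degree parts cancel; a priori the class of $\omega_v$, $v\in\Inter P\cap N$, could therefore drop filtration level in $\Omega_{f_P}$, making $\mathrm{gr}^{\alpha}_{\mathcal N}\Omega_{f_P}$ strictly smaller than the span of the interior monomials of value $\alpha$. Excluding this collapse is exactly where the nondegeneracy of $f_P$ must enter, through Kouchnirenko-type exactness of the graded Koszul complex; it is not a purely combinatorial statement about lattice points.

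Your fallback citation does not close this gap: \cite[Corollary 4.13]{DoSa1} only identifies the spectrum at infinity with the spectrum of the Newton filtration induced on $\Omega_{f_P}$ by projection — which you already invoke in your first sentence — and gives no information on the dimensions of $\mathrm{gr}^{\alpha}_{\mathcal N}\Omega_{f_P}$ for $\alpha<1$; by itself it yields only the upper bound (the multiplicity of $\alpha<1$ is at most the number of $v\in\Inter P\cap N$ with $\nu(v)=\alpha$). The statement that does the work — that in Newton degrees $<1$ nothing is killed, equivalently that $\mathrm{gr}_{\mathcal N}\bigl(df_P\wedge\Omega^{n-1}(U)\bigr)$ lives in degrees $\geq 1$ — is \cite[Lemma 4.6]{DoSa1}, which is what the paper cites. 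So either quote that lemma (as the paper does), or reprove it from Kouchnirenko's nondegeneracy theorem; as written, the lower bound on the multiplicities in $[0,1[$ is not established.
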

\begin{proof} The assertion for $\Spec_{f_P}$ follows from
\cite[Lemma 4.6]{DoSa1} and \cite[Example 4.17]{DoSa1}.
\end{proof}

In the two dimensional case, we deduce the following description of the algebraic spectrum:

\begin{proposition}\label{prop:SpecAlgDim2}
  Let $P$ be a full dimensional lattice polytope in $N_{\rit}=\rit^2$ containing the origin in its interior. Then
  \begin{equation}\nonumber
 \Spec_ P ^{\alg}(z) =(\card (\partial P\cap N)-2)z+
   \sum_{v\in \Inter P\cap N}(z^{\nu (v)}+z^{2-\nu (v)})
  \end{equation}
 where $\nu$ is the Newton function of $P$. 
 \end{proposition}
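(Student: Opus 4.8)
The plan is to combine the general structure already established for the algebraic spectrum with the explicit two-dimensional bookkeeping. First I would recall that $Spec_P^{alg}(z)$ is a degree $n=2$ polynomial in $z$ (it lies in $[0,2]$, its exponents are rational, and it has $\mu_P$ terms counted with multiplicity) and that it satisfies Poincaré duality $Spec_P^{alg}(z)=z^2 Spec_P^{alg}(z^{-1})$. So it suffices to pin down the part contained in $[0,1[$, the multiplicity of $1$, and then invoke the symmetry to recover the part in $]1,2]$.

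The part in $[0,1[$ is handled directly by Proposition \ref{prop:SpectreAlg01}: it is exactly the sequence $\nu(v)$, $v\in\Inter P\cap N$, and in particular $0$ occurs once (corresponding to $v=0$). Applying Poincaré duality, the part of the spectrum contained in $]1,2]$ is the sequence $2-\nu(v)$, $v\in\Inter P\cap N$; note $2-\nu(v)\in]1,2]$ precisely when $\nu(v)\in[0,1[$, so these two pieces together account for $2\,\card(\Inter P\cap N)$ spectral numbers, all of which are $\ne 1$ except the pair coming from $v=0$, which contributes $z^0+z^2$. What remains is to compute the multiplicity of $1$, i.e. $\dim gr_V^1\Omega_{f_P}$. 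Since the total number of spectral numbers is $\mu_P=n!\vol(P)=2\vol(P)$ (normalized), the multiplicity of $1$ equals $\mu_P-2\,\card(\Inter P\cap N)$, and by Pick's theorem $\mu_P=2\vol(P)=2\,\card(\Inter P\cap N)+\card(\partial P\cap N)-2$, so the multiplicity of $1$ is $\card(\partial P\cap N)-2$. Assembling the three pieces gives
\begin{equation}\nonumber
Spec_P^{alg}(z)=(\card(\partial P\cap N)-2)z+\sum_{v\in\Inter P\cap N}\bigl(z^{\nu(v)}+z^{2-\nu(v)}\bigr),
\end{equation}
as claimed (the $v=0$ term contributing $z^0+z^2=1+z^2$).

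The main obstacle, and the step that deserves the most care, is justifying that the multiplicity of the value $1$ is obtained purely by subtraction — that is, that there are no ``hidden'' spectral numbers equal to $1$ beyond those forced by counting, and more importantly that no spectral number lies strictly between $1$ and $2-\max_{v\in\Inter P}\nu(v)$ or gets miscounted. This is really a consequence of Poincaré duality together with Proposition \ref{prop:SpectreAlg01}: duality sends $[0,1[$ bijectively onto $]1,2]$, leaving the value $1$ as the only possibility for the remaining $\mu_P-2\,\card(\Inter P\cap N)$ spectral numbers. One should also double-check the Pick-formula input (valid for any full-dimensional lattice polygon in $\rit^2$, not just reflexive ones) and the normalization conventions so that the counts match; these are routine but need to be stated cleanly. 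Alternatively, one could bypass the counting argument entirely by appealing to Proposition \ref{prop:SpecGeodelta0} and Corollary \ref{coro:SpecGeo01} once the geometric and algebraic spectra are identified, but since that identification comes later in the paper the self-contained two-dimensional argument via duality and Pick is the cleaner route here.
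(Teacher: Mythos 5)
Your proof is correct and follows essentially the same route as the paper: Proposition \ref{prop:SpectreAlg01} for the part in $[0,1[$, the symmetry $Spec_{f_P}(z)=z^2Spec_{f_P}(z^{-1})$ for the part in $]1,2]$, and Pick's formula together with $\mu_{f_P}=2\vol(P)$ (Kouchnirenko) to get the coefficient of $z$. You simply spell out the counting argument in more detail than the paper does.
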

\begin{proof}  
 Let $f_P (\underline{u})=\sum_{b\in {\cal V}(P)}\underline{u}^b$ be as above.
From Proposition \ref{prop:SpectreAlg01}, and because of the symmetry property $\Spec_{f_P} (z)=z^{2} \Spec_{f_P}(z^{-1})$, we get
  \begin{equation}\nonumber
 \Spec_ {f_P} (z) =(\card (\partial P\cap N)-2)z+
   \sum_{v\in \Inter P\cap N}(z^{\nu (v)}+z^{2-\nu (v)})
  \end{equation}
The coefficient of $z$ is computed using Pick's formula (see for instance \cite[Theorem 2.8]{BeckRobbins}) because $\mu_{f_P} =2 \vol(P)$ by \cite[Th\'eor\`eme III]{K}. 
\end{proof}

In any dimension, we also have the following description for reflexive polytopes:

\begin{proposition}\label{prop:SpectrePolytopeLisse}
Let $P$ be a full dimensional reflexive simplicial polytope in $N_{\rit}=\rit^n$ containing the origin in its interior. Then: 
\begin{itemize}
\item $\Spec_P^{\alg} (z) =\sum_{i=0}^{n}d(i)z^{i}$ where $d(i)\in\nit$,
\item $d(i)=d(n-i)$ for $i=0,\cdots ,n$ with $d(0)=d(n)=1$,
\item $\sum_{i=0}^{n}d(i)=\mu_P$
\end{itemize}
\end{proposition}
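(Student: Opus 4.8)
The plan is to read off all three assertions directly from what has already been established about $Spec_P^{alg}$: the description of its ``low'' part in Proposition \ref{prop:SpectreAlg01}, the Poincaré duality $Spec_P^{alg}(z)=z^n Spec_P^{alg}(z^{-1})$, and an integrality statement extracted from reflexivity together with the Newton filtration description of the spectrum at infinity recalled in section \ref{sec:SpecPolLaurent}.

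First I would establish integrality. Since $P$ is reflexive, the correspondence (\ref{eq:CorrVerticePolar}) shows that every $u_F$ is a lattice point of $M$, so $\nu_F(v)=\langle u_F,v\rangle\in\zit$ for all $v\in N$, hence $\nu(v)\in\zit$; moreover $\nu(v)\ge 0$, with equality only for $v=0$ (as $0$ lies in the interior of $P$, every ray from $0$ meets $\partial P$, forcing $\nu(v)\ge 1$ for $v\neq 0$). Consequently the Newton filtration $\mathcal{N}_\bullet$ on $\Omega^n(U)$ jumps only at the values $\nu(c)$, $c\in N$, hence only at non‑negative integers, and the $V$‑filtration it induces on the quotient $\Omega_f$ can only jump where the ambient filtration does, so also only at non‑negative integers. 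By \cite[Corollary 4.13]{DoSa1} this induced filtration is precisely the one whose spectrum is $Spec_P^{alg}$; combined with $Spec_P^{alg}\subset[0,n]$, every spectral number lies in $\{0,1,\dots,n\}$, so $Spec_P^{alg}(z)=\sum_{i=0}^n d(i)z^i$ with $d(i)\in\nit$, which is the first item.

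Next I would invoke symmetry and pin down the endpoints. The relation $Spec_P^{alg}(z)=z^n Spec_P^{alg}(z^{-1})$ gives $d(i)=d(n-i)$ at once. For $d(0)$: by Proposition \ref{prop:SpectreAlg01} the part of $Spec_P^{alg}$ in $[0,1)$ consists of the values $\nu(v)$, $v\in\Inter P\cap N$; since these are integers lying in $[0,1)$ they all equal $0$, and as the multiplicity of $0$ equals one we get $d(0)=1$, whence $d(n)=1$ by the symmetry just obtained. Finally, evaluating at $z=1$ yields $\sum_{i=0}^n d(i)=Spec_P^{alg}(1)=\mu=\mu_{f_P}$, and $\mu_{f_P}=\mu_P$ by Kouchnirenko's theorem as recalled in section \ref{sec:DefAlgSpec}, giving the third item.

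The step requiring most care is the integrality claim, specifically the passage from $\mathcal{N}_\bullet\Omega^n(U)$ to the induced $V$‑filtration on $\Omega_f=\Omega^n(U)/df\wedge\Omega^{n-1}(U)$: one must be sure no new jumps at non‑integral values appear, which is where the $D$‑module input of \cite{DoSa1} is used rather than elementary polytope combinatorics. I note that once the comparison $Spec_P^{alg}=Spec_P^{geo}$ of section \ref{sec:Comparaison} is available, the whole proposition also follows from Corollary \ref{coro:SpecGeoReflexif}, which identifies $Spec_P^{geo}$ with the $\delta$‑vector of $P$ so that $d(i)=\delta_i$; but since that machinery comes later, the self‑contained argument above is preferable here.
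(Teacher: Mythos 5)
Your proof is correct and follows essentially the same route as the paper: integrality of the Newton function on lattice points via reflexivity and (\ref{eq:CorrVerticePolar}), combined with the Newton-filtration description of the spectrum and $Spec_P^{alg}\subset[0,n]$, gives the first item; the symmetry together with the multiplicity-one statement of Proposition \ref{prop:SpectreAlg01} gives the second; and evaluation at $z=1$ with $\mu_{f_P}=\mu_P$ gives the third. The additional care you take in passing the filtration to the quotient $\Omega_f$, and the alternative route via Corollary \ref{coro:SpecGeoReflexif} that you mention but set aside, go slightly beyond but do not diverge from the paper's argument.
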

\begin{proof} Because $P$ is reflexive, the Newton function takes integer values at the lattice points, 
see (\ref{eq:CorrVerticePolar}). This gives the first point because 
 $\Spec_P^{\alg}\subset [0,n]$. For the second one, use the symmetry and the fact that
 $0$ is in the spectrum with multiplicity one. 
\end{proof}

\begin{example} (Hirzebruch surfaces and their Givental-Hori-Vafa models)\\
\label{ex:SurfacesHirz}
Let $m$ be a positive integer.
The fan $\Delta_{\fit_{m}}$ of the Hirzebruch surface $\fit_m$  is the one whose rays are generated by the vectors 
$ v_{1}=(1,0)$, $v_{2}=(0,1)$, $v_{3}=(-1,m)$, $v_{4}=(0,-1)$,
see for instance \cite{Ful}. The surface
${\fit_m}$ is Fano if $m=1$, weak Fano if 
$m=2$. Its Givental-Hori-Vafa model is the Laurent polynomial 
\begin{equation}\nonumber
f_m (u_1 ,u_2 )=u_1 +u_2 +\frac{q_1}{u_2}+ q_2\frac{u_2^m}{u_1}.
\end{equation} 
We have 
\begin{enumerate}
\item $\mu_{f_1} =4$ and $\Spec_{f_1}(z)= 1+2z +z^2$,
\item $\mu_{f_2} =4$ if $q_2 \neq \frac{1}{4}$ and $\Spec_{f_2}(z)= 1+2z +z^2$,
\item $\mu_{f_m} =m+2$ if $m\geq 3$ and
$$\Spec_{f_m}(z)= 1+2z+z^2 +z^{\frac{1}{p}}+z^{\frac{2}{p}}+\cdots + z^{\frac{p-1}{p}}+
z^{2-\frac{1}{p}}+z^{ 2-\frac{2}{p}}+\cdots +z^{2-\frac{p-1}{p}}$$
if $m=2p$ and $p\geq 2$,
$$\Spec_{f_m}(z)=1+z+z^2 +z^{\frac{2}{m}}+z^{ \frac{4}{m}}+\cdots +z^{ \frac{2p}{m}}+
z^{2-\frac{2}{m}}+z^{2-\frac{4}{m}}+\cdots z^{ 2-\frac{2p}{m}}$$
if $m=2p+1$ and $p\geq 1$.
\end{enumerate}
Indeed,
for $m\neq 2$ we have 
$\mu_{f_m} =2! \vol(P)$, where $P$ is the Newton polytope of $f_m$,
because $f_m$ is convenient and nondegenerate for all non zero value of the parameters, see Section \ref{sec:K}.
For $m=2$, $f_2$  is nondegenerate if and only if             
$q_2 \neq \frac{1}{4}$ and the previous argument applies in this case
(if $q_2 =1/4$ the Milnor number is $2$). The spectrum is given by Proposition \ref{prop:SpecAlgDim2}.

The function $f_2$ is a genuine mirror partner of the surface $\fit_2$, see \cite{D}, \cite{RS}.
If $m\geq 3$, we have $\mu_{f_m} >4$ and the model $f_m$ has too many critical points.
\end{example}

\section{Geometric spectrum {\em vs} algebraic spectrum}
\label{sec:Comparaison}

We show in this section (Corollary \ref{coro:SpecGeoegalSpecAalg}) the equality
$\Spec_P^{\alg}(z)=\Spec_P^{\geo}  (z)$. The idea is to show that these functions are Hilbert-Poincar\'e series of isomorphic graded rings (Theorem \ref{theo:IsoGradedRings}). This is achieved using the description of the orbifold Chow ring given in \cite{BCS}.
Notice that in the two dimensional case, the expected equality follows immediately from 
Corollary \ref{coro:SpecGeo01}, Proposition \ref{prop:SpectreAlg01}
and the symmetry property.

\subsection{An isomorphism of graded rings}

Let $P$ be a full dimensional simplicial polytope containing the origin in its interior and
let $f_P (\underline{u})=\sum_{j=1}^r \underline{u}^{b_{j}}$ be the corresponding convenient and nondegenerate Laurent polynomial as in Section \ref{sec:DefAlgSpec}. 
In what follows, we will write $\underline{u}^c := u_1^{c_1}\cdots u_n^{c_n}$ if $c=(c_1 ,\cdots ,c_n )\in N$ and $K:=\qit [u_1, u_1^{-1},\cdots, u_n ,u_n^{-1}]$.
Let
\begin{equation}\nonumber
 \mathcal{A}_{f_{P}}=\frac{K}
 {\langle    u_1 \frac{\partial f_P}{\partial u_{1}},\cdots , u_n \frac{\partial f_P}{\partial u_{n}}       \rangle } 
\end{equation}
be the jacobian ring of $f_{P}$.
Define on $K$ the Newton filtration $\mathcal{N}_{\bullet}$ by
\begin{equation}\nonumber
\mathcal{N}_{\alpha}K= \{ \sum_{c\in N} a_c \underline{u}^c \in K,\   \nu (c)\leq\alpha \ \mbox{for all}\ c\ \mbox{such that}\ a_c\neq 0\} 
\end{equation}
where $\nu$ is the Newton function of $P$. The vector space
$\mathcal{N}_{<\alpha}K$ is defined similarly: replace the condition $\nu (c)\leq\alpha$ by $\nu (c)<\alpha$. 
This filtration induces by projection a filtration, also denoted by $\mathcal{N}_{\bullet}$ on $\mathcal{A}_{f_P}$, and we get the graded ring $\Gr^{\mathcal{N}}\mathcal{A}_{f_{P}}=\oplus_{\alpha} \mathcal{N}_{\alpha}\mathcal{A}_{f_{P}}/\mathcal{N}_{<\alpha}\mathcal{A}_{f_{P}}$ (see equation (\ref{eq:MultiplicationNewton}) below).

\begin{theorem}\label{theo:IsoGradedRings}
There is an isomorphism of graded rings
\begin{equation}\nonumber
A_{\orb}^* ({\cal X}(\mathbf{\Delta}))\cong \Gr^{\mathcal{N}}\mathcal{A}_{f_{P}}
\end{equation}
where $\mathbf{\Delta}$ is the stacky fan associated with $P$ as in Section \ref{sec:EventailChampetre}.
\end{theorem}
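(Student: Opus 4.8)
The plan is to exhibit both rings as quotients of one and the same deformed group ring of the stacky fan $\mathbf{\Delta}$, via compatible presentations, and then to bridge the two quotients by a dimension count. First I would make the defining ideal of $\mathcal{A}_{f_P}$ explicit. Since $f_P=\sum_{j=1}^{r}\underline{u}^{b_j}$ with $b_1,\dots,b_r$ the vertices of $P$, one has $u_i\frac{\partial f_P}{\partial u_i}=\sum_{j=1}^{r}(b_j)_i\,\underline{u}^{b_j}$, so the jacobian ideal is generated by the linear forms $\theta(m):=\sum_{j=1}^{r}\langle m,b_j\rangle\,\underline{u}^{b_j}$, $m\in M$, i.e.
$$\mathcal{A}_{f_P}=K\big/\langle\,\theta(m)\ :\ m\in M\,\rangle .$$
Because each vertex $b_j$ lies on $\partial P$, we have $\nu(b_j)=1$; hence each $\theta(m)$ is $\mathcal{N}$-homogeneous of degree $1$ and coincides with its own $\mathcal{N}$-leading form. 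These $\theta(m)$ are precisely the $M$-linear relations appearing in the presentation of $A^{*}_{orb}({\cal X}(\mathbf{\Delta}))$ of Borisov, Chen and Smith \cite{BCS}.

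Next I would identify $Gr_{\mathcal{N}}K$ with the deformed group ring underlying that presentation. The Newton function $\nu$ is the support function of the polar polytope $P^{\circ}$ (whose vertices are the $u_F$): it is convex, positively homogeneous of degree $1$, linear precisely on the cones of $\Delta_P$, and $\geq 0$ on $N$ with $\nu(c)=0$ only for $c=0$. In particular $\nu$ is subadditive, and $\nu(c+c')=\nu(c)+\nu(c')$ holds if and only if $c$ and $c'$ lie in a common cone of $\Delta_P$. Since $\mathcal{N}_{\alpha}K$ is spanned by the $\underline{u}^c$ with $c\in\alpha P\cap N$, the graded ring $Gr_{\mathcal{N}}K$ has $\qit$-basis $\{[\underline{u}^c]:c\in N\}$ with $[\underline{u}^c]$ in degree $\nu(c)$, and product $[\underline{u}^c]\cdot[\underline{u}^{c'}]=[\underline{u}^{c+c'}]$ when $c,c'$ span a cone of $\Delta_P$ and $0$ otherwise --- which is the deformed product of \cite{BCS}. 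Quotienting by the homogeneous ideal generated by the $\theta(m)$, I obtain a surjective morphism of graded rings $A^{*}_{orb}({\cal X}(\mathbf{\Delta}))=Gr_{\mathcal{N}}K/\langle\theta(m):m\in M\rangle\twoheadrightarrow Gr_{\mathcal{N}}\mathcal{A}_{f_P}$ sending the class of $\underline{u}^c$ to the class of $\underline{u}^c$; here $Gr_{\mathcal{N}}\mathcal{A}_{f_P}=Gr_{\mathcal{N}}K/\mathrm{in}(I)$ with $I=\langle\theta(m):m\in M\rangle$, and $\mathrm{in}(I)\supseteq I$ because the $\theta(m)$ are $\mathcal{N}$-homogeneous.

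It then remains to prove this surjection is bijective, which I would do by a dimension count. The Newton filtration on the finite-dimensional $\qit$-space $\mathcal{A}_{f_P}$ is exhaustive and separated, so $\dim_{\qit}Gr_{\mathcal{N}}\mathcal{A}_{f_P}=\dim_{\qit}\mathcal{A}_{f_P}=\mu_{f_P}=\mu_P$ by Kouchnirenko (section \ref{sec:K}). On the other hand, from the decomposition (\ref{eq:OrbiCohDecomposition}) of \cite{BCS},
$$\dim_{\qit}A^{*}_{orb}({\cal X}(\mathbf{\Delta}))=\sum_{\sigma\in\Delta}\card(\boite(\sigma)\cap N)\cdot\dim_{\qit}A^{*}(X_{\Delta/\sigma})=n!\,\vol(P)=\mu_P,$$
this last value being $\lim_{z\rightarrow 1}Spec_P^{geo}(z)$ as computed in lemma \ref{lemma:SpecGeoBoxDimn} and corollary \ref{coro:SpecGeoOrbifold}. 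A surjection of $\qit$-vector spaces of equal finite dimension is an isomorphism; hence the morphism above is an isomorphism of graded rings, and, as a by-product, $\mathrm{in}(I)=\langle\theta(m):m\in M\rangle$.

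The hard part is the middle step: one has to check, coefficient by coefficient, that the product on $Gr_{\mathcal{N}}K$ reproduces the Borisov--Chen--Smith deformed product --- notably the ``common cone'' characterisation of equality in the subadditivity of $\nu$, and the fact that in the simplicial normalisation of \cite{BCS} no lattice multiplicities enter the structure constants --- and that the ring structure on $Gr_{\mathcal{N}}\mathcal{A}_{f_P}$ inherited from $\mathcal{A}_{f_P}$ via (\ref{eq:MultiplicationNewton}) is the one intended in the statement. Once this multiplicative identification is secured, the leading-form inclusion and the dimension count are routine.
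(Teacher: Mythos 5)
Your proposal is correct and follows essentially the same route as the paper: present $\mathcal{A}_{f_P}$ by the log-derivative relations $\theta(m)=\sum_j\langle m,b_j\rangle\underline{u}^{b_j}$, identify $Gr_{\mathcal{N}}K$ (the paper's ring $K^g$, via the equivalence $\nu(c+c')=\nu(c)+\nu(c')\Leftrightarrow c,c'$ in a common cone and $\nu(b_j)=1$) with the Borisov--Chen--Smith deformed group ring, and invoke their presentation of $A^*_{orb}({\cal X}(\mathbf{\Delta}))$. The only difference is that where the paper directly asserts $A_{f_P}\cong Gr_{\mathcal{N}}\mathcal{A}_{f_P}$ from (\ref{eq:MultiplicationNewton}), you make the initial-ideal subtlety explicit and close it by the dimension count $\dim Gr_{\mathcal{N}}\mathcal{A}_{f_P}=\mu_P=\dim A^*_{orb}({\cal X}(\mathbf{\Delta}))$ (Kouchnirenko on one side, the decomposition (\ref{eq:OrbiCohDecomposition}) together with lemma \ref{lemma:SpecGeoBoxDimn} on the other), which is a legitimate, non-circular way to justify that step.
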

\begin{proof}
We first recall the setting of \cite[Section 5]{BCS}.
Let $\mathbf{\Delta} =(N, \Delta ,\{b_i\})$ be the stacky fan of $P$. We define its deformed group ring $\qit [N]^{\mathbf{\Delta}}$ as follows:
\begin{itemize}
\item as a $\qit$-vector space,
$\qit [N]^{\mathbf{\Delta}}=\oplus_{c\in N}\qit\ y^c$
where $y$ is a formal variable, 
\item the ring structure is given by 
$y^{c_1}.y^{c_2}=y^{c_1 +c_2}$ if $c_1$ and $c_2$ belong to the same cone, $y^{c_1}.y^{c_2}= 0$ otherwise,
\item the grading is defined as follows: if $c=\sum_{\rho_i \subseteq \sigma (c)} m_i b_i$ then $\deg (y^c )=\sum m_i \in\qit$ ($\sigma (c)$ is the minimal cone containing $c$). 
\end{itemize}
Observe that $\deg (y^c )=\nu (c)$ where $\nu$ is the Newton function of $P$.
Because $\Delta$ is simplicial, we have, by \cite[Theorem 1.1]{BCS},
an isomorphism of $\qit$-graded rings
\begin{equation}\label{eq:IsoOrbifoldChowRing}
\frac{\qit [N]^{\mathbf{\Delta}}}{\langle \sum_{i=1}^r \langle m, b_i \rangle y^{b_i} , \ m\in M\rangle}\longrightarrow  
\oplus_{v\in \Box (\Delta )} A^* ({\cal X}(\mathbf{\Delta /\sigma (v)}))[\deg (y^v )]
\end{equation}
where $\sigma (v)$ is the minimal cone containing $v$,
$\Box (\sigma):=\{\sum_{\rho_{i}\subset\sigma}\lambda_i b_i,\ \lambda_i \in [0,1[\}$ and
$\Box (\Delta )$ is the union of $\Box (\sigma )$ for all $n$-dimensional cones $\sigma\in\Delta$.

On the other side, we have
\begin{equation}\nonumber
 \mathcal{A}_{f_{P}}=\frac{K}
 {\langle \sum_{j=1}^{r}\langle m, b_{j}\rangle \underline{u}^{b_{j}}, \ m\in M\rangle } 
\end{equation}
 because
$u_i \frac{\partial f_P}{\partial u_{i}} =\sum_{j=1}^{r} \langle e_{i}^* , b_{j}\rangle \underline{u}^{b_{j}} $
 where $(e_{i}^*)$ is the dual basis of the canonical basis of $N$. 
Define the ring
\begin{equation}\nonumber
A_{f_{P}}=\frac{K^g}
 {\langle \sum_{j=1}^{r}\langle m, b_{j}\rangle \underline{u}^{b_{j}}, \ m\in M\rangle } 
\end{equation}
where $K^g =K$ as a vector space and the multiplication on $K^g$ is defined as follows: 
\begin{align}\nonumber
\underline{u}^{c_{1}}.\underline{u}^{c_{2}}=  \left\{ \begin{array}{ll}
\underline{u}^{c_{1}+c_{2}} & \mbox{if}\  
c_{1}\ \mbox{ and}\ c_{2}\ \mbox{ belong to the same cone}\ \sigma\ \mbox{of}\ \Delta_P,\\
0 & \mbox{otherwise}
\end{array}
\right .
\end{align}
\noindent Define a grading on $K^g$ by 
$\deg (\underline{u}^{c})=\nu (c)$.
Because $\nu (c_1 +c_2 )= \nu (c_1 )+\nu (c_2)$ if and only if $c_{1}$ and $c_{2}$ belong to the same cone $\sigma$ of $\Delta_P$ 
and because $\nu (b_j )=1$, the ring 
$A_{f_{P}}$ is graded. Moreover, because
\begin{align}\label{eq:MultiplicationNewton}
\underline{u}^{v}. \underline{u}^{w}\in  \left\{ \begin{array}{ll}
\mathcal{N}_{\alpha +\beta}K
 & \mbox{if}\  
\underline{u}^{v}\in \mathcal{N}_{\alpha}K\ \mbox{and}\ \underline{u}^{w}\in \mathcal{N}_{\beta}K ,\\
\mathcal{N}_{<\alpha +\beta}K & \mbox{if $v$ and $w$ do not belong to the same cone of $\Delta_P$},
\end{array}
\right .
\end{align}
the graded ring $A_{f_P}$ is isomorphic to $\Gr^{\mathcal{N}}\mathcal{A}_{f_{P}}$. 
Last, the rings
$\frac{\qit [N]^{\mathbf{\Delta}}}{\langle \sum_{i=1}^r \langle m, b_i \rangle y^{b_i} , \ m\in M\rangle}$
and $A_{f_P}$ are isomorphic, and
the theorem follows now from the isomorphism (\ref{eq:IsoOrbifoldChowRing}).
\end{proof}

\begin{corollary}\label{coro:SpecGeoegalSpecAalg}
 Assume that $P$ is a simplicial polytope containing the origin in its interior. Then
 $\Spec_P^{\alg}(z)=\Spec_P^{\geo}  (z)$.
\end{corollary}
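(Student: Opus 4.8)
\emph{Proof proposal.} The plan is to exhibit both $Spec_P^{alg}(z)$ and $Spec_P^{geo}(z)$ as Hilbert--Poincar\'e series of graded vector spaces which Theorem \ref{theo:IsoGradedRings} identifies, up to the standard degree doubling between Chow groups and cohomology, and then to invoke Corollary \ref{coro:SpecGeoOrbifold}.

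First I would rewrite the algebraic spectrum in terms of the Jacobian ring. By \cite[Corollary 4.13]{DoSa1} the spectrum at infinity of $f_P$ equals the spectrum of the Newton filtration $\mathcal{N}_{\bullet}$ induced on $\Omega_{f_P}=G_0/\theta G_0$, so $Spec_P^{alg}(z)=\sum_{\alpha\in\qit}\dim_{\cit}\big(\mathcal{N}_{\alpha}\Omega_{f_P}/\mathcal{N}_{<\alpha}\Omega_{f_P}\big)\,z^{\alpha}$. Next, the $K$-module isomorphism $\underline{u}^{c}\mapsto \underline{u}^{c}\,\frac{du_1\wedge\cdots\wedge du_n}{u_1\cdots u_n}$ identifies $K$ with $\Omega^{n}(U)$ and carries the Jacobian ideal $\langle u_1\partial_{u_1}f_P,\cdots,u_n\partial_{u_n}f_P\rangle$ onto $df_P\wedge\Omega^{n-1}(U)$; since $f_P$ is convenient and nondegenerate this yields $\Omega_{f_P}\cong \mathcal{A}_{f_P}$, and because of the normalisation $\nu\big(\frac{du_1\wedge\cdots\wedge du_n}{u_1\cdots u_n}\big)=0$ this identification is strictly compatible with the Newton filtrations on the two sides. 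Hence $Spec_P^{alg}(z)$ is exactly the Hilbert--Poincar\'e series of $Gr_{\mathcal{N}}\mathcal{A}_{f_P}$.

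Then I would run the chain of identifications. By Theorem \ref{theo:IsoGradedRings} there is an isomorphism of $\qit$-graded rings
\begin{equation}\nonumber
Gr_{\mathcal{N}}\mathcal{A}_{f_{P}}\cong A_{orb}^{*}({\cal X}(\mathbf{\Delta})),
\end{equation}
so the two have the same Hilbert--Poincar\'e series. By \cite{BCS}, after tensoring with $\cit$ the orbifold Chow ring is isomorphic, as a graded ring, to the orbifold (Chen--Ruan) cohomology ring with the grading doubled, i.e. the degree-$\alpha$ piece of $A_{orb}^{*}({\cal X}(\mathbf{\Delta}))\otimes_{\qit}\cit$ is $H^{2\alpha}_{orb}({\cal X}(\mathbf{\Delta}),\cit)$. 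Therefore the Hilbert--Poincar\'e series of $Gr_{\mathcal{N}}\mathcal{A}_{f_P}$ in the variable $z$ is $\sum_{\alpha}\dim_{\cit}H^{2\alpha}_{orb}({\cal X}(\mathbf{\Delta}),\cit)\,z^{\alpha}$, which by Corollary \ref{coro:SpecGeoOrbifold} equals $Spec_{P}^{geo}(z)$. Concatenating the three equalities gives $Spec_P^{alg}(z)=Spec_P^{geo}(z)$.

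The step that needs the most care is the first one: one must check that the spectrum at infinity, \emph{a priori} defined via the $V$-filtration of the Fourier--Laplace transformed Gauss--Manin system on $\Omega_{f_P}=G_0/\theta G_0$, really coincides term by term with the graded dimensions of the purely combinatorial Newton filtration on the Jacobian ring, and that the dictionary $\omega_c\leftrightarrow\underline{u}^c$ matches the two filtration indexings (including the normalisation at the volume form). This is precisely what \cite[Corollary 4.13]{DoSa1} provides, together with the elementary identification $\Omega_{f_P}\cong\mathcal{A}_{f_P}$; the remaining two steps are formal, being nothing but an application of Theorem \ref{theo:IsoGradedRings} and of the Borisov--Chen--Smith comparison between the orbifold Chow ring and Chen--Ruan cohomology.
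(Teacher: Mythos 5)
Your proposal is correct and follows essentially the same route as the paper: identify $Spec_P^{alg}(z)$ with the Hilbert--Poincar\'e series of $Gr_{\mathcal{N}}\mathcal{A}_{f_P}$ via \cite[Corollary 4.13]{DoSa1}, transfer it through Theorem \ref{theo:IsoGradedRings} to $A_{orb}^*({\cal X}(\mathbf{\Delta}))$, and conclude with Corollary \ref{coro:SpecGeoOrbifold}. Your extra care about the $\Omega_{f_P}\cong\mathcal{A}_{f_P}$ dictionary and the Chow-versus-cohomology degree doubling only makes explicit what the paper leaves implicit.
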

\begin{proof}
As explained in Section \ref{sec:SpecPolLaurent},
$\Spec_P^{\alg} (z)$ is the Hilbert-Poincar\'e series of the graded vector space $\Gr^{\mathcal{N}} \mathcal{A}_{f_{P}}$.
Now, Theorem \ref{theo:IsoGradedRings} shows that the latter coincide with the Hilbert-Poincar\'e series of $A_{\orb}^* ({\cal X}(\mathbf{\Delta}))$  and we get
the assertion by Corollary \ref{coro:SpecGeoOrbifold}.
\end{proof}

\noindent This corollary can also be shown using \cite[Th\'eor\`eme 2.8]{K}.

\subsection{Application to weighted projective spaces}

\label{sec:EPP}

Let $(\lambda_{0},\cdots ,\lambda_{n})\in (\nit^*)^{n+1}$ be such that
$\gcd (\lambda_{0},\cdots ,\lambda_{n})=1$ and let $X$ be the weighted projective space  
$\ppit (\lambda_{0},\cdots ,\lambda_{n})$. 
{\em We assume from now on that} $\lambda_0 =1$.
The (stacky) fan of $X$ is given by: 
 \begin{enumerate}
\item the lattice $N=\zit^n$,
 \item the morphism $\beta :\zit^{n+1}\rightarrow N$,  which sends $e'_i$ to $e_i$ for $i=1,\cdots,n$ and $e'_0$ to $-\lambda_1 e_1 -\cdots -\lambda_n e_n$, where $(e'_i)$ is the canonical basis of $\zit^{n+1}$ and $(e_i )$ is the canonical basis of $N$,
 \item the fan $\Delta$, which is the complete and simplicial fan whose rays are generated by $b_{i}:=\beta (e'_i )$, $i=0,\cdots ,n$.
 \end{enumerate}
In this situation, we will call the convex hull $P$ of $b_{0},\cdots , b_{n}$ {\em the polytope of} $X$.
We have
$\mu_P =1+\lambda_{1}+\cdots +\lambda_{n}$ and $\mu_{P^{\circ}}=\frac{(1+\lambda_1 +\cdots +\lambda_n)^n}{\lambda_1 \cdots \lambda_n}$.

Let   
$$F:=\left\{\frac{\ell}{\lambda_{i}}|\, 0\leq\ell\leq \lambda_{i}-1,\ 0\leq i\leq n\right\}$$
and let $f_{1},\cdots , f_{k}$ be the elements of $F$ arranged by increasing order.
Define      
\begin{align}\nonumber
  S_{f_i}:=\{j|\ \lambda_{j}f_i \in\zit\}\subset \{0,\cdots ,n\}\ \mbox{and}\
d_{i}:=\card S_{f_{i}}
\end{align}
and let $c_{0},c_{1},\cdots , c_{\mu -1}$ be the sequence
$$\underbrace{f_{1},\cdots ,f_{1}}_{d_{1}},\underbrace{f_{2},\cdots ,f_{2}}_{d_{2}},\cdots ,\underbrace{f_{k},\cdots ,f_{k}}_{d_{k}}$$
arranged by increasing order. By \cite[Theorem 1]{DoSa2}, 
the spectrum at infinity of $f$
is the sequence
$\alpha_0 ,\alpha_1 ,\cdots , \alpha_{\mu -1}$ where
$$ \alpha_{k}:=k-\mu c_{k}\ \mbox{for}\ 
k=0,\cdots ,\mu -1.$$ 
\noindent Notice that this spectrum is integral if and only if the polytope $P$ of 
$X$ is reflexive.

We have
\begin{equation}\nonumber
f_P (u_1 ,\cdots , u_n )= u_1 +\cdots + u_n + \frac{1}{u_1^{\lambda_1}\cdots u_n^{\lambda_n}}.
\end{equation}
By Proposition \ref{prop:HVLaurent}, this is also the Givental-Hori-Vafa model of $X$. 
A mirror theorem is shown in \cite{DoMa}.

\begin{example}
\label{ex:SpectreGeoEPP} 

We test Corollary \ref{coro:SpecGeoegalSpecAalg} on weighted projective spaces.
The computation of the geometric spectrum is done using Proposition \ref{prop:SpecGeoEgalEstP}.
The rays are numbered clockwise.

\begin{enumerate}
\item Let $a$ be a positive integer and let $P$ be the convex hull of $(1,0)$, $(-1, -a)$ and $(0,1)$: this is the polytope of $\ppit (1,1,a)$.   
We consider the resolution obtained by adding 
the ray generated by $(0,-1)$. Using the notations of Proposition \ref{prop:SpecGeoEgalEstP},
 we have $\nu_{1}=1$, $\nu_2 =\frac{2}{a}$, $\nu_{3}=1$ and $\nu_{4}=1$ 
and we get 
\begin{equation}\nonumber
\Spec_{P}^{\geo} (z)=E(\fit_a ,z)+E(\ppit^1 ,z)\frac{z-z^{2/a }}{z^{2/a }-1}
=1+2z+z^2 +(1+z)(\frac{z-1}{z^{2/a}-1}-1)
\end{equation}
\begin{equation}\nonumber
=1+z+z^{2}+z^{2/a}+z^{4/a}+\cdots +z^{2(a-1)/a}=\Spec_P^{\alg} (z)
\end{equation}
where $\fit_{a}$ is the Hirzebruch surface.

\item Let $P$ be the convex hull of $(1,0)$, $(-2, -5)$ and $(0,1)$:
$P$ is the polytope of  $\ppit (1,2,5)$.       
We consider the resolution obtained by adding 
the rays generated by $(0,-1)$, $(-1,-3)$
and $(-1,-2)$. Using the notations of Proposition \ref{prop:SpecGeoEgalEstP},
 we have $\nu_1 =1$, $\nu_{2}=\frac{3}{5}$, $\nu_{3}=\frac{4}{5}$, $\nu_{4}=1$, $\nu_{5}=1$ and $\nu_{6}=1$ 
and we get
\begin{equation}\nonumber
\Spec_{P}^{\geo}(z)=z^2 +4z +1 +(z+1)\frac{z-z^{3/5}}{z^{3/5}-1}+(z+1)\frac{z-z^{4/5}}{z^{4/5}-1}
+\frac{z-z^{3/5}}{z^{3/5}-1}.\frac{z-z^{4/5}}{z^{4/5}-1}
\end{equation}
\begin{equation}\nonumber
=z^2 +2z +1 +z^{3/5} + z^{4/5}+z^{6/5}+z^{7/5}=\Spec_P^{\alg} (z).
\end{equation}

\item Let $\ell\in\nit^*$ and $P$ be the convex hull of $(1,0)$, $(-\ell , -\ell )$ and $(0,1)$:
$P$ is the polytope of $\ppit (1,\ell , \ell)$.
The variety $X_{\Delta_P}$ is $\ppit^2$, generated by the rays 
$v_0 =(-1, -1)$, $v_1=(1,0)$ and $v_2 =(0,1)$. Because $\nu (v_0 )=\frac{1}{\ell}$,
$\nu (v_1 )=1$ and $\nu (v_2 )=1$,
 we get    
\begin{equation}\nonumber
\Spec_{P}^{\geo}(z)=E(\ppit^2 ,z)+E(\ppit^1 ,z)\frac{z-z^{1/\ell }}{z^{1/\ell }-1}
=1+z+z^2 +(1+z)\frac{z-z^{1/\ell }}{z^{1/\ell }-1}
\end{equation}
\begin{equation}\nonumber
=z^2 +z +1 + z^{1/\ell}+\cdots +z^{(\ell -1)/\ell} +
z^{1+1/\ell}+\cdots + z^{1+(\ell -1)/\ell }=\Spec_P^{\alg} (z).
\end{equation}

\item Let $P$ be the convex hull of $(-2, -2, -2)$, $(1, 0, 0)$, $(0,1,0)$ and 
$(0,0,1)$: $P$ is the polytope of $\ppit (1, 2 , 2 , 2)$. We have $X_{\Delta_P}=\ppit^3$, generated by the rays 
$v_0 =(-1, -1,-1)$, $v_1=(1,0,0)$, $v_2 =(0, 1,0)$ and $v_3 =(0,0,1)$. Because $\nu (v_0 )=\frac{1}{2}$, we get 
$$\Spec_{P}^{\geo}(z) =E(\ppit^3 ,z)+E(\ppit^2 ,z)\frac{z-z^{1/2}}{z^{1/2}-1}$$
$$=z^3 +z^2 +z +1 + z^{1/2}+z^{3/2} +z^{5/2}=\Spec_{P}^{\alg}(z).$$

\end{enumerate}

\end{example}

\section{A formula for the variance of the spectra}
\label{sec:ConjectureVarianceSpectre}

We now prove formula (\ref{eq:VarianceIntro}) of the introduction.
We first show it for the geometric spectrum of a full dimensional simplicial lattice polytope.

\subsection{Libgober-Wood's formula for the spectra}

In order to get first a stacky version of the Libgober-Wood formula (\ref{eq:LWIntro}), 
we give the following definition, inspired by  Batyrev's stringy number $c_{\st}^{1, n-1} (X)$, see \cite[Definition 3.1]{Batyrev3}:

\begin{definition} 
Let $P$ be a full dimensional simplicial lattice polytope in $N$ containing the origin in its interior and
let $\rho :Y\rightarrow X$ be a resolution of $X:=X_{\Delta_P}$. We define the rational number
\begin{equation}\label{eq:ClasseBatyrevChampetre}
\widehat{\mu}_P:=c_1 (Y) c_{n-1} (Y)
+\sum_{J\subset I,\ J\neq\emptyset}c_1 (D_J )c_{n-|J|-1}(D_J ) 
\prod_{j\in J}\frac{1-\nu_j}{\nu_j}
\end{equation}
\begin{equation}\nonumber
-\sum_{J\subset I,\ J\neq\emptyset} (\sum_{j\in J} \nu_j )c_{n-|J|}(D_J ) \prod_{j\in J}\frac{1-\nu_j}{\nu_j }
\end{equation}
where the notations in the right hand term are the ones of Section \ref{sec:InterpretationResolutionSing} (convention:  $c_{r}(D_J )=0$ if $r<0$).
\end{definition}

\begin{remark}\label{remark:DiversCstP}
We have $\widehat{\mu}_P =c_1 (Y) c_{n-1} (Y)$ if $\nu_i =1$ for all $i$ (crepant resolutions) and
$\widehat{\mu}_P=c_1 (X) c_{n-1} (X)$ if $X$ is smooth.  
\end{remark}

\begin{theorem} 
\label{theo:VarianceSpectreGeometrique}
Let $P$ be a full dimensional simplicial lattice polytope in $N$ containing the origin in its interior and let
$\Spec_{P}^{\geo}(z)=\sum_{i=1}^{e_P}z^{\beta_i}$ be its geometric spectrum.
Then
\begin{equation}\label{eq:VarianceSpectreGeometrique}
\sum_{i=1}^{e_P}(\beta_i -\frac{n}{2})^2 =\frac{n}{12} \mu_P +\frac{1}{6} \widehat{\mu}_P
\end{equation}
where $\widehat{\mu}_P$ is defined by formula (\ref{eq:ClasseBatyrevChampetre}) and 
$\mu_P :=n!\vol (P)$.
\end{theorem}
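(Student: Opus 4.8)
The plan is to reduce formula \eqref{eq:VarianceSpectreGeometrique} to a second-derivative statement about $Spec_P^{geo}(z)$ at $z=1$, and then to compute that second derivative via the resolution-theoretic expression $E_{st,P}(z)$ of \eqref{eq:DefStringyFunctionChampetre}, using Proposition \ref{prop:SpecGeoEgalEstP} together with Batyrev's stringy Libgober--Wood formula \eqref{eq:VarStringyIntro}. First I would record the elementary identity: if $S(z)=\sum_{i=1}^{e_P}z^{\beta_i}$ satisfies $S(1)=\mu_P$ and the Poincar\'e duality $z^nS(z^{-1})=S(z)$ of Corollary \ref{coro:SymetrieSpecGeo}, then differentiating the symmetry relation twice at $z=1$ converts $\frac{d^2}{dz^2}S(z)|_{z=1}$ into $\sum_i\beta_i(\beta_i-1)$ shifted appropriately, and a short computation gives
\begin{equation}\nonumber
\sum_{i=1}^{e_P}\Bigl(\beta_i-\frac n2\Bigr)^2=\frac{d^2}{dz^2}Spec_P^{geo}(z)\Big|_{z=1}+\frac n2\,\mu_P-\frac{n^2}{4}\mu_P+\frac{n(n-1)}{4}\mu_P
\end{equation}
(i.e. the variance is an explicit affine function of the second derivative and of $\mu_P$; I would pin down the exact constants during the write-up, matching the normalization in the introduction's \eqref{eq:LWIntro}--\eqref{eq:VarianceDegreCohomologie} passage). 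So the whole theorem is equivalent to the ``stacky Libgober--Wood identity''
\begin{equation}\nonumber
\frac{d^2}{dz^2}Spec_P^{geo}(z)\Big|_{z=1}=\frac{n(3n-5)}{12}\mu_P+\frac16\,\widehat{\mu}_P.
\end{equation}

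Next I would prove this identity by working on a resolution $\rho:Y\to X_{\Delta_P}$. By Proposition \ref{prop:SpecGeoEgalEstP}, $Spec_P^{geo}(z)=\sum_{J\subset I}E(D_J,z)\prod_{j\in J}\frac{z-z^{\nu_j}}{z^{\nu_j}-1}$. The term $J=\emptyset$ contributes $E(Y,z)$, whose second derivative at $z=1$ is handled by the \emph{smooth} Libgober--Wood formula \eqref{eq:LWIntro} applied to $Y$: it gives $\frac{n(3n-5)}{12}c_n(Y)+\frac16 c_1(Y)c_{n-1}(Y)$. For $J\neq\emptyset$, each factor $g_j(z):=\frac{z-z^{\nu_j}}{z^{\nu_j}-1}$ has a removable singularity at $z=1$ with $g_j(1)=\frac{1-\nu_j}{\nu_j}$; I would Taylor-expand $g_j$ to second order at $z=1$ (the coefficients being rational functions of $\nu_j$ only) and $E(D_J,z)$ to second order at $z=1$ (with $E(D_J,1)=c_{n-|J|}(D_J)$, $\frac{d}{dz}E(D_J,z)|_{z=1}$ and $\frac{d^2}{dz^2}E(D_J,z)|_{z=1}$ expressed via Chern numbers of $D_J$ again through \eqref{eq:LWIntro} on the smooth variety $D_J$ of dimension $n-|J|$, plus its dual). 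Multiplying out the Leibniz expansion of $\frac{d^2}{dz^2}\bigl[E(D_J,z)\prod_j g_j(z)\bigr]|_{z=1}$ and summing over $J$, the $c_1c_{n-|J|-1}$ and $(\sum\nu_j)c_{n-|J|}$ pieces assemble exactly into the definition \eqref{eq:ClasseBatyrevChampetre} of $\widehat{\mu}_P$, while the pure Euler-characteristic pieces $c_{n-|J|}(D_J)$ with their $\prod g_j(1)$ weights reassemble — using that $e_{st}=\lim_{z\to1}E_{st,P}(z)=\mu_P$ and that $\lim_{z\to1}E_{st}(X,z)=e_{st}(X)$ — into the $\frac{n(3n-5)}{12}\mu_P$ term. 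This is the same bookkeeping Batyrev carries out for \eqref{eq:VarStringyIntro} in \cite[Definition 3.1, proof of the stringy Libgober--Wood formula]{Batyrev3}; indeed, an alternative and cleaner route is to quote \eqref{eq:VarStringyIntro} directly for $X=X_{\Delta_P}$ and then transport it along the substitution relating $E_{st}(X,uv)$ (a function of $z=uv$ with parameters $a_j=\nu_j-1$) to $E_{st,P}(z)$, which is the ``$\nu_j$ instead of $a_j+1$'' reparametrization visible by comparing \eqref{eq:DefStringyFunction} and \eqref{eq:DefStringyFunctionChampetre}.

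I would therefore structure the proof in two steps: (i) the purely formal reduction to the second-derivative identity using Corollary \ref{coro:SymetrieSpecGeo} and $e_P=\mu_P$ from Lemma \ref{lemma:SpecGeoBoxDimn}; (ii) the verification of the second-derivative identity by expanding Proposition \ref{prop:SpecGeoEgalEstP}'s formula to order two at $z=1$ and recognizing $\widehat{\mu}_P$. The main obstacle is step (ii): it is the delicate part of matching Batyrev's stringy constants. Concretely, one must (a) justify that every factor and every $E(D_J,z)$ is smooth enough at $z=1$ for the termwise second-order expansion to be legitimate (the apparent poles $(z^{\nu_j}-1)^{-1}$ all cancel, so $E_{st,P}$ is genuinely a polynomial-like Puiseux series regular at $z=1$), and (b) carry out the Leibniz combinatorics so that the cross-terms involving $\frac{d}{dz}E(D_J,z)|_{z=1}\cdot g_j'(1)$ and $E(D_J,1)\cdot g_j''(1)$ collapse precisely onto the three groups of terms in \eqref{eq:ClasseBatyrevChampetre}. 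Since \eqref{eq:VarStringyIntro} is already available in the literature for $X_{\Delta_P}$, I expect the honest shortcut is to invoke it and spend the effort only on the reparametrization $a_j+1\leftrightarrow\nu_j$ and on checking that $c_{st}^{1,n}(X_{\Delta_P})$ in Batyrev's sense equals $\widehat{\mu}_P$ in the sense of \eqref{eq:ClasseBatyrevChampetre}; that identification, plus step (i), finishes the theorem.
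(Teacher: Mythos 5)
Your main route is essentially the paper's own proof: reduce, via the symmetry $z^{n}Spec_P^{geo}(z^{-1})=Spec_P^{geo}(z)$ and $e_P=\mu_P$, to the second-derivative identity $\frac{d^2}{dz^2}Spec_P^{geo}(z)_{|z=1}=\frac{n(3n-5)}{12}\mu_P+\frac{1}{6}\widehat{\mu}_P$, and prove that identity by expanding $E_{st,P}(z)=\sum_{J}E(D_J,z)\prod_{j\in J}\frac{z-z^{\nu_j}}{z^{\nu_j}-1}$ termwise at $z=1$, using the smooth Libgober--Wood formula on each $D_J$ together with $E(D_J,1)=c_{n-|J|}(D_J)$, $\frac{d}{dz}E(D_J,z)_{|z=1}=\frac{n-|J|}{2}c_{n-|J|}(D_J)$ and the values $\frac{1-\nu}{2\nu}$, $\frac{(\nu-1)(\nu+1)}{6\nu}$ of the first and second derivatives of $\frac{z-z^{\nu}}{z^{\nu}-1}$ at $z=1$; this is precisely the computation in the paper, modelled on Batyrev's argument. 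Two cautions. First, your displayed reduction constant is wrong: since $\sum_i\beta_i=\frac{n}{2}\mu_P$, one gets $\sum_i\bigl(\beta_i-\frac{n}{2}\bigr)^2=\frac{d^2}{dz^2}Spec_P^{geo}(z)_{|z=1}-\frac{n(n-2)}{4}\mu_P$, with no extra $\frac{n(n-1)}{4}\mu_P$ term; only with this does $\frac{n(3n-5)}{12}-\frac{n(n-2)}{4}=\frac{n}{12}$ come out. Second, the suggested ``cleaner shortcut'' of quoting \eqref{eq:VarStringyIntro} for $X_{\Delta_P}$ and merely reparametrizing $a_j+1\leftrightarrow\nu_j$ is not legitimate in general: Batyrev's stringy $E$-function uses the support function of $-K_{X_{\Delta_P}}$, which equals $1$ on the primitive ray generators and enters only through the exceptional divisors, whereas $E_{st,P}$ uses the Newton function of $P$ on all rays; the two coincide only when the vertices of $P$ are primitive, i.e.\ when $P$ is Fano. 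For non-Fano $P$ (e.g.\ the polytope of $\ppit(1,\ell,\ell)$, $\ell\geq 2$) one has $E_{st,P}(z)\neq E_{st}(X_{\Delta_P},z)$, $e_{st}(X_{\Delta_P})\neq\mu_P$ and $c_{st}^{1,n}(X_{\Delta_P})\neq\widehat{\mu}_P$, so the identification you propose to ``check'' is simply false there, and the direct termwise expansion you describe first is the argument that must actually be carried out.
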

\begin{proof} 
 Recall the stacky $E$-function 
$E_{\st, P}(z):=\sum_{J\subset I}E( D_J , z)\prod_{j\in J}\frac{z-z^{\nu_j}}{z^{\nu_j}-1}$, see formula (\ref{eq:DefStringyFunctionChampetre}).  
Then we have
\begin{equation}\label{eq:VarianceLWChampetre}
E''_{st,P} (1)=\frac{n(3n-5)}{12} e_{P}+\frac{1}{6}\widehat{\mu}_P 
\end{equation}
where $e_P$ is the geometric Euler number of $P$ of Definition \ref{def:specgeopoltope}.
The proof of this formula is a straightforward computation and is similar to the one of \cite[Theorem 3.8]{Batyrev3}: 
if $V$ is a smooth variety of dimension $n$, we have the Libgober-Wood formula
\begin{equation}\label{eq:LW}
E '' (V,1)=\frac{n(3n-5)}{12}c_n (V)+\frac{1}{6}c_1 (V) c_{n-1}(V)
\end{equation}
where $E$ is the $E$-polynomial of $V$, see \cite[Proposition 2.3]{LW}; 
in order to get (\ref{eq:VarianceLWChampetre}),
apply this formula
to the components $E(D_J ,z)$ of $E_{\st, P}(z)$ and use the equalities
\begin{equation}\nonumber
E(D_J ,1)=c_{n-|J|}(D_J ),\ 
\frac{d}{dz} (E(D_J ,z))_{|z=1}=\frac{n-|J|}{2}c_{n-|J|}(D_J )
\end{equation}
(the first one follows from the fact that the value at $z=1$ of the Poincar\'e polynomial is the Euler characteristic and we
get the second one using Poincar\'e duality for $D_J$)
and
\begin{equation}\nonumber
\frac{d}{dz}(\frac{z-z^{\nu}}{z^{\nu}-1})_{| z=1}=\frac{1-\nu}{2\nu},\
\frac{d^2}{dz^2}(\frac{z-z^{\nu}}{z^{\nu}-1})_{| z=1}=\frac{(\nu -1)(\nu +1)}{6\nu}
\end{equation}
if $\nu$ is a positive rational number. By Proposition \ref{prop:SpecGeoEgalEstP}, we have 
$\Spec^{\geo}_P (z)=E_{\st, P}(z)$ and we get
\begin{equation}\label{eq:Specf1} 
\frac{d^2}{dz^2}(\Spec_P^{\geo} (z))_{| z=1}=\frac{n(3n-5)}{12}e_P+\frac{1}{6} \widehat{\mu}_P.
\end{equation}
Because the geometric spectrum is symmetric with respect to $\frac{n}{2}$ (see Corollary \ref{coro:SymetrieSpecGeo}),
we have $\frac{d}{dz}(\Spec_P^{\geo}(z)) _{| z=1}= \frac{n}{2} e_P$ and we deduce that
\begin{equation}\label{eq:Specf1bis}
\frac{d^2}{dz^2}(\Spec_P^{\geo} (z))_{| z=1}
= \sum_{i=1}^{e_P}(\beta_i -\frac{n}{2})^2 +\frac{n(n-2)}{4} e_P.
\end{equation}
Now, formulas (\ref{eq:Specf1}) and (\ref{eq:Specf1bis}) give equality (\ref{eq:VarianceSpectreGeometrique}) because
$e_P =\mu_P$ by Lemma \ref{lemma:SpecGeoBoxDimn}.
\end{proof}

\begin{corollary} \label{coro:BLWindepedantRho}
The number $\widehat{\mu}_P$ does not depend on the resolution $\rho$. \qed
\end{corollary}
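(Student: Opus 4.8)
The plan is to read off $\widehat{\mu}_P$ from the identity just established and observe that nothing resolution-dependent survives. Concretely, Theorem \ref{theo:VarianceSpectreGeometrique} can be rewritten as
\begin{equation}\nonumber
\widehat{\mu}_P = 6\Big(\sum_{i=1}^{e_P}\big(\beta_i - \tfrac{n}{2}\big)^2\Big) - \tfrac{n}{2}\,\mu_P .
\end{equation}
Here the sequence $\beta_1,\dots,\beta_{e_P}$ consists of the exponents of $Spec_P^{geo}(z)$, which by Definition \ref{def:specgeopoltope} is built only from the Newton function $\nu$ of $P$, hence depends on $P$ alone; and $e_P = \mu_P = n!\,\vol(P)$ by Lemma \ref{lemma:SpecGeoBoxDimn}, which is also intrinsic to $P$. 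Since the right-hand side involves no choice of resolution $\rho$, neither does $\widehat{\mu}_P$.

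An equivalent route, which is perhaps more in the spirit of the definition \eqref{eq:ClasseBatyrevChampetre}, goes through the stacky $E$-function: formula \eqref{eq:VarianceLWChampetre} gives $\widehat{\mu}_P = 6\,E''_{st,P}(1) - \tfrac{n(3n-5)}{2}\,e_P$, and Proposition \ref{prop:SpecGeoEgalEstP} asserts precisely that $E_{st,P}(z) = Spec_P^{geo}(z)$, so that $E_{st,P}$ — a priori assembled from the divisors $D_J$ of a chosen resolution in \eqref{eq:DefStringyFunctionChampetre} — is already known to be resolution-independent. Either way, the corollary is a direct consequence of results proved above, applied with $Spec_P^{geo}$ in the role of an intrinsic invariant.

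I do not expect a genuine obstacle here: the whole content is the resolution-independence of $Spec_P^{geo}$ (equivalently of $E_{st,P}$), which has already been secured, and the corollary merely notes that the Libgober–Wood correction term is thereby pinned down by the polytope. One could instead try to prove the statement directly, comparing the three Chern-class sums in \eqref{eq:ClasseBatyrevChampetre} for two toric resolutions related by a sequence of smooth star subdivisions (toric weak factorization); this is the analogue of how Batyrev checks that $c^{1,n-1}_{st}(X)$ is well defined, but it is strictly more laborious and the indirect argument above makes it unnecessary.
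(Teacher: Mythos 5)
Your argument is correct and matches the paper's (implicit) proof: theorem \ref{theo:VarianceSpectreGeometrique} determines $\widehat{\mu}_P$ as $6\sum_{i=1}^{e_P}(\beta_i-\tfrac{n}{2})^2-\tfrac{n}{2}\mu_P$, and both the geometric spectrum and $\mu_P$ are intrinsic to $P$, so no resolution data survives. The alternative route via $E_{st,P}=Spec_P^{geo}$ (proposition \ref{prop:SpecGeoEgalEstP}) is the same content, and no direct weak-factorization comparison is needed.
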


\noindent The version for singularities is straightforward:

\begin{theorem} 
\label{theo:VarianceMiroirToriqueChampetre}
Let $f$ be a convenient and nondegenerate Laurent polynomial with global Milnor number $\mu$ and 
spectrum at infinity $\Spec_{f} (z)=\sum_{i=1}^{\mu}z^{\alpha_i}$. 
Let $P$ be its Newton polytope (assumed to be simplicial). Then
\begin{equation}\label{eq:VarianceMiroirToriqueChampetre}
\sum_{i=1}^{\mu}(\alpha_i -\frac{n}{2})^2 =\frac{n}{12}\mu_P +\frac{1}{6} \widehat{\mu}_P
\end{equation}
where $\widehat{\mu}_P$ is defined by formula (\ref{eq:ClasseBatyrevChampetre}) and $\mu_P :=n!\vol (P)=\mu$.
\end{theorem}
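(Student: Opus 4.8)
The plan is to deduce Theorem \ref{theo:VarianceMiroirToriqueChampetre} from Theorem \ref{theo:VarianceSpectreGeometrique} by identifying the two spectra attached to the Newton polytope $P$. First I would observe that the Laurent polynomial $f$ in the statement is, up to the choice of coefficients, the canonical Laurent polynomial $f_P(\underline{u})=\sum_{b\in\mathcal{V}(P)}\underline{u}^b$ introduced in section \ref{sec:DefAlgSpec}: both are convenient and nondegenerate with the same Newton polytope $P$, and the spectrum at infinity depends only on the Newton filtration of $P$ (see section \ref{sec:SpecPolLaurent} and \cite[Corollary 4.13]{DoSa1}), hence only on $P$. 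Therefore $Spec_f(z)=Spec_P^{alg}(z)$, and in particular $\mu=\mu_{f}=\mu_P=n!\vol(P)$ by Kouchnirenko's theorem \cite{K}.

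Next I would invoke the comparison Corollary \ref{coro:SpecGeoegalSpecAalg}, which gives $Spec_P^{alg}(z)=Spec_P^{geo}(z)$. Combining this with the previous step yields $Spec_f(z)=Spec_P^{geo}(z)$, so the multiset $\{\alpha_1,\dots,\alpha_\mu\}$ coincides with the geometric spectrum $\{\beta_1,\dots,\beta_{e_P}\}$ (and $e_P=\mu_P=\mu$ by Lemma \ref{lemma:SpecGeoBoxDimn}). In particular
\begin{equation}\nonumber
\sum_{i=1}^{\mu}(\alpha_i-\tfrac{n}{2})^2=\sum_{i=1}^{e_P}(\beta_i-\tfrac{n}{2})^2 .
\end{equation}

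Finally I would apply Theorem \ref{theo:VarianceSpectreGeometrique} to the right-hand side, which gives exactly $\frac{n}{12}\mu_P+\frac{1}{6}\widehat{\mu}_P$, and this is formula (\ref{eq:VarianceMiroirToriqueChampetre}). So the proof is essentially a one-line consequence of the two comparison results already established; the only genuine content is the observation that the statement's hypothesis ``$f$ convenient and nondegenerate with simplicial Newton polytope $P$'' is enough to force $Spec_f=Spec_P^{alg}$, i.e.\ that the spectrum at infinity does not depend on the particular coefficients of $f$ but only on $P$. That independence is the main (and only) point requiring care, and it is supplied by the Newton-filtration description of the spectrum recalled in section \ref{sec:SpecPolLaurent}; everything else is a direct citation of Theorem \ref{theo:VarianceSpectreGeometrique} and Corollary \ref{coro:SpecGeoegalSpecAalg}.
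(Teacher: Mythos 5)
Your overall route is exactly the paper's: identify $Spec_f$ with $Spec_P^{alg}$, then with $Spec_P^{geo}$ via Corollary \ref{coro:SpecGeoegalSpecAalg}, apply Theorem \ref{theo:VarianceSpectreGeometrique}, and get $\mu=\mu_P$ from Kouchnirenko \cite{K}. The one place where you diverge is the step you yourself single out as the only point requiring care, namely $Spec_f=Spec_{f_P}=Spec_P^{alg}$, and there your justification is not sufficient as written. The Newton filtration $\mathcal{N}_{\bullet}$ is indeed defined on $\Omega^n(U)$ purely in terms of $P$, but the spectrum is the spectrum of the filtration \emph{induced on the quotient} $\Omega_f=\Omega^n(U)/df\wedge\Omega^{n-1}(U)$, and this quotient, hence the dimensions of its graded pieces, depends a priori on the coefficients of $f$ and not only on $P$. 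So \cite[Corollary 4.13]{DoSa1} alone does not yield the coefficient-independence you need. Note also that Theorem \ref{theo:IsoGradedRings} and Corollary \ref{coro:SpecGeoegalSpecAalg} are proved only for the particular polynomial $f_P=\sum_{b\in\mathcal{V}(P)}\underline{u}^b$, whose Jacobian relations have the special form exploited there; a general convenient nondegenerate $f$ with Newton polytope $P$ may involve further monomials and arbitrary coefficients, so you cannot simply transfer that isomorphism to $f$.

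To close the gap you need one additional input: either the semicontinuity theorem of Nemethi--Sabbah \cite{NS}, which is precisely what the paper invokes to obtain $Spec^{alg}_P(z)=Spec_f(z)$ for any convenient nondegenerate $f$ with Newton polytope $P$, or Kouchnirenko's finer graded statement that nondegeneracy forces the Newton principal parts of $u_1\partial f/\partial u_1,\cdots ,u_n\partial f/\partial u_n$ to form a regular sequence in the Newton-graded ring, so that the Hilbert--Poincar\'e series of $Gr_{\mathcal{N}}\mathcal{A}_f$ is computed by a Koszul argument and depends only on $P$. With either of these ingredients supplied, the remainder of your argument is correct and coincides with the paper's proof.
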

\begin{proof}  By \cite{NS} we have $\Spec^{\alg}_P (z)=\Spec_{f} (z)$ and the assertion thus 
follows from
Theorem \ref{theo:VarianceSpectreGeometrique} and Corollary \ref{coro:SpecGeoegalSpecAalg}. Last, because $f$ is convenient and nondegenerate we have $\mu =\mu_P$ by \cite{K}.
\end{proof}

\subsection{The number $\widehat{\mu}_P$ in the two dimensional case}
\label{sec:TwoDimCase}

In this section we give an explicit formula for $\widehat{\mu}_P$ in the two dimensional case. 
Let $P$ be a full dimensional polytope in $N_{\rit}=\rit^2$ containing the origin, 
and let $\rho :Y\rightarrow X$ be a resolution of $X:=X_{\Delta_P}$ as in Section \ref{sec:InterpretationResolutionSing}.
We assume that the primitive generators $v_1 ,\cdots ,v_r$ of the rays of $Y$
are numbered clockwise and we consider indices as integers modulo $r$ 
so that $\nu_{r+1}:=\nu_1$ (recall that $\nu_i :=\nu (v_i )$ where $\nu$ is the Newton function of $P$).

\begin{proposition}\label{prop:MuPDim2}
We have 
\begin{equation}\nonumber
 \widehat{\mu}_P=c_1 ^2 (Y) -2r +\sum_{i=1}^r  (\frac{\nu_i}{\nu_{i+1}}+\frac{\nu_{i +1}}{\nu_{i}})=(\sum_{i=1}^r \nu_i D_i ) (\sum_{j=1}^r \frac{1}{\nu_{j}}D_j).
\end{equation}
\end{proposition}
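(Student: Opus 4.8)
The strategy is to start from the defining expression
\[
\widehat{\mu}_P = c_1(Y)c_{1}(Y) + \sum_{J\neq\emptyset} c_1(D_J)c_{1-|J|}(D_J)\prod_{j\in J}\frac{1-\nu_j}{\nu_j} - \sum_{J\neq\emptyset}\Bigl(\sum_{j\in J}\nu_j\Bigr)c_{2-|J|}(D_J)\prod_{j\in J}\frac{1-\nu_j}{\nu_j}
\]
with $n=2$, and simply unwind which subsets $J\subseteq I$ contribute. Since $Y$ is a smooth complete toric surface, the divisors $D_i$ are the torus-invariant curves $\cong\mathbb{P}^1$ associated with the rays $v_1,\dots,v_r$ (numbered clockwise), and $D_i\cap D_j\neq\emptyset$ exactly when $v_i,v_j$ span a $2$-dimensional cone of the fan of $Y$, i.e. when $j=i\pm 1$. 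For $|J|=1$, $J=\{i\}$: $c_1(D_i)c_0(D_i)=\chi(D_i)=2$ since $D_i\cong\mathbb{P}^1$, and $c_{2-|J|}(D_i)=c_1(D_i)$ also has degree $\chi(D_i)=2$; for $|J|=2$, $J=\{i,i+1\}$: $D_J$ is a point, $c_{-1}(D_J)=0$ by the stated convention, and $c_0(D_J)=1$; for $|J|\geq 3$, $D_J=\emptyset$. So the formula collapses to a sum over the $r$ rays and the $r$ adjacent pairs.

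Carrying this out, the $|J|=1$ terms give $\sum_i 2\cdot\frac{1-\nu_i}{\nu_i} - \sum_i \nu_i\cdot 2\cdot\frac{1-\nu_i}{\nu_i} = 2\sum_i\frac{(1-\nu_i)^2}{\nu_i} = 2\sum_i\bigl(\frac1{\nu_i}-2+\nu_i\bigr)$. The $|J|=2$ terms give $-\sum_i(\nu_i+\nu_{i+1})\cdot 1\cdot\frac{(1-\nu_i)(1-\nu_{i+1})}{\nu_i\nu_{i+1}}$. Adding $c_1(Y)^2$ and grinding the algebra (expanding $\frac{(1-\nu_i)(1-\nu_{i+1})}{\nu_i\nu_{i+1}}=\frac1{\nu_i\nu_{i+1}}-\frac1{\nu_i}-\frac1{\nu_{i+1}}+1$ and regrouping telescoping-style over the cyclic index), one should find that the linear-in-$\nu$ and constant contributions rearrange into $-2r+\sum_i(\frac{\nu_i}{\nu_{i+1}}+\frac{\nu_{i+1}}{\nu_i})$, leaving $\widehat{\mu}_P=c_1(Y)^2-2r+\sum_{i=1}^r(\frac{\nu_i}{\nu_{i+1}}+\frac{\nu_{i+1}}{\nu_i})$. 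This is the bookkeeping heart of the argument: the main obstacle is making sure the cyclic reindexing ($\sum_i \frac{\nu_{i+1}}{\nu_i}=\sum_i\frac{\nu_i}{\nu_{i-1}}$, etc.) is done correctly so that no boundary terms are dropped, and that the $c_1(Y)^2$ contribution — which one can also verify against the toric formula $c_1(Y)^2 = 12 - \chi(Y) = 12 - r$ for a smooth complete toric surface, though this identity is not needed here — is tracked consistently.

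For the second equality, expand the self-intersection-type product
\[
\Bigl(\sum_{i=1}^r \nu_i D_i\Bigr)\Bigl(\sum_{j=1}^r \frac1{\nu_j}D_j\Bigr) = \sum_{i=1}^r D_i^2 + \sum_{i\neq j}\frac{\nu_i}{\nu_j}(D_i\cdot D_j).
\]
Using $D_i\cdot D_j = 1$ for $j=i\pm1$ and $0$ otherwise (adjacent invariant curves on a smooth toric surface meet transversally in one point), the off-diagonal part is $\sum_{i}\bigl(\frac{\nu_i}{\nu_{i+1}}+\frac{\nu_{i+1}}{\nu_i}\bigr)$. For the diagonal part, $\sum_i D_i^2$: on a smooth complete toric surface the self-intersection numbers $D_i^2 = -a_i$ are governed by the relation $v_{i-1}+v_{i+1}=a_i v_i$, and one has $\sum_i D_i^2 = \sum_i(D_i\cdot(-K_Y)) - 2r$ is not quite the cleanest route; instead note $K_Y=-\sum_i D_i$, so $c_1(Y)^2 = K_Y^2 = \sum_{i,j}D_i\cdot D_j = \sum_i D_i^2 + 2r$, whence $\sum_i D_i^2 = c_1(Y)^2 - 2r$. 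Substituting gives exactly $c_1(Y)^2-2r+\sum_i(\frac{\nu_i}{\nu_{i+1}}+\frac{\nu_{i+1}}{\nu_i})$, matching the first expression. Thus both displayed quantities equal $\widehat{\mu}_P$, completing the proof. The only delicate points are the combinatorial conventions on adjacent rays and the sign/convention $c_{-1}=0$, both of which are fixed by the setup in section \ref{sec:InterpretationResolutionSing}.
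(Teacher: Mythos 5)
Your proposal is correct and takes essentially the same route as the paper: the same enumeration of contributing subsets $J$ (singletons giving $2\sum_i\frac{(1-\nu_i)^2}{\nu_i}$, adjacent pairs giving the mixed term via $c_{-1}=0$ and $D_{\{i,i+1\}}$ a point, larger $J$ empty), the same cyclic cancellation yielding $-2r+\sum_i(\frac{\nu_i}{\nu_{i+1}}+\frac{\nu_{i+1}}{\nu_i})$, and the same expansion of $(\sum_i\nu_iD_i)(\sum_j\frac{1}{\nu_j}D_j)$ using $D_iD_{i\pm1}=1$ and $\sum_iD_i^2=c_1^2(Y)-2r$. The computation you leave as ``grinding the algebra'' does check out, so there is no gap.
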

\begin{proof} 
By definition, we have
\begin{equation}\label{eq:ClasseBatyrevChampetreDim2}
\widehat{\mu}_P:=c_1 (Y) c_{1} (Y)
+\sum_{J\subset I,\ J\neq\emptyset}c_1 (D_J )c_{1-|J|}(D_J ) 
\prod_{j\in J}\frac{1-\nu_j}{\nu_j}
\end{equation}
\begin{equation}\nonumber
-\sum_{J\subset I,\ J\neq\emptyset} (\sum_{j\in J} \nu_j )c_{2-|J|}(D_J ) \prod_{j\in J}\frac{1-\nu_j}{\nu_j }
\end{equation}
and thus
\begin{equation}\nonumber
 \widehat{\mu}_P=c_1 ^2 (Y) +
 2\sum_{i=1}^r \frac{(1-\nu_i )^2}{\nu_{i}}- \sum_{i=1}^r( \nu_i +\nu_{i+1} )\frac{(1-\nu_i )}{\nu_{i}}\frac{(1-\nu_{i+1})}{\nu_{i+1}}.
\end{equation}
It follows that
\begin{equation}\nonumber
\widehat{\mu}_P -c_1 ^2 (Y) 
=
\sum_{i=1}^r (\frac{1}{\nu_{i}}+\nu_i - \frac{1}{\nu_{i+1}}- \nu_{i+1} 
+\frac{\nu_i }{\nu_{i+1}} +\frac{\nu_{i+1}}{\nu_{i}}-2 )
\end{equation}
\begin{equation}\nonumber
=-2r + \sum_{i=1}^r (\frac{\nu_i }{\nu_{i+1}} +\frac{\nu_{i+1}}{\nu_{i}})
\end{equation}
and this gives the first equality. For the second one, notice that
\begin{equation}\nonumber
(\sum_{i=1}^r \nu_i D_i ) (\sum_{j=1}^r \frac{1}{\nu_{j}}D_j)
=\sum_{i=1}^r (D_i^2 +\frac{\nu_i}{\nu_{i+1}}D_i D_{i+1} 
+\frac{\nu_i}{\nu_{i-1}}D_i D_{i-1})
\end{equation}
\begin{equation} \nonumber
=\sum_{i=1}^r (D_i^2 +\frac{\nu_i}{\nu_{i+1}} 
+\frac{\nu_{i+1}}{\nu_{i}})
=c_1^2 (Y)-2r+\sum_{i=1}^r (\frac{\nu_i}{\nu_{i+1}} 
+\frac{\nu_{i+1}}{\nu_{i}})=\widehat{\mu}_P.
\end{equation}
\end{proof}

\begin{corollary}\label{coro:VarianceDim2Vraie}
Let $P$ be a full dimensional lattice polytope in $N_{\rit}=\rit^2$ containing the origin in its interior and let
$\Spec_{P}^{\geo}(z)=\sum_{i=1}^{e_P}z^{\beta_i}$ be its geometric spectrum.
Then
\begin{equation}\label{eq:VarianceMiroirToriqueChampetreDim2}
\sum_{i=1}^{\mu_P}(\beta_i -1)^2 =\frac{\mu_P}{6} +\frac{\widehat{\mu}_P}{6} 
\end{equation}
where $\widehat{\mu}_P\geq c_1 ^2 (Y)$ for any resolution $\rho : Y\rightarrow X_{\Delta_P}$.
\end{corollary}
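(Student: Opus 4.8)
The plan is to specialize the general variance formula of Theorem~\ref{theo:VarianceSpectreGeometrique} to dimension two and then extract the inequality from the explicit expression obtained in Proposition~\ref{prop:MuPDim2}. First I would put $n=2$ in equation~(\ref{eq:VarianceSpectreGeometrique}): this turns $\frac{n}{12}\mu_P$ into $\frac{\mu_P}{6}$, leaves the term $\frac{1}{6}\widehat{\mu}_P$ unchanged, and replaces $\frac{n}{2}$ by $1$, giving $\sum_{i=1}^{e_P}(\beta_i-1)^2=\frac{\mu_P}{6}+\frac{\widehat{\mu}_P}{6}$. Since $e_P=\mu_P$ by Lemma~\ref{lemma:SpecGeoBoxDimn}, the upper limit of the sum may be written as $\mu_P$, which is exactly~(\ref{eq:VarianceMiroirToriqueChampetreDim2}). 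One point worth a line: Theorem~\ref{theo:VarianceSpectreGeometrique} is stated for simplicial polytopes, but in $\rit^2$ every full dimensional lattice polytope containing the origin in its interior has simplicial fan $\Delta_P$, since each two-dimensional cone is the cone over an edge and is therefore generated by its two rays; so no additional hypothesis is needed.

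For the inequality I would invoke the first equality of Proposition~\ref{prop:MuPDim2}, namely $\widehat{\mu}_P=c_1^2(Y)-2r+\sum_{i=1}^r\bigl(\frac{\nu_i}{\nu_{i+1}}+\frac{\nu_{i+1}}{\nu_i}\bigr)$, where $\nu_1,\dots,\nu_r$ are the values of the Newton function of $P$ at the primitive generators of the rays of the chosen resolution $Y$, indices taken modulo $r$. All the $\nu_i$ are strictly positive: this is recorded in section~\ref{sec:FonctionsFiliformes}, where it is noted that the relevant support function is positive on every ray occurring in a resolution. For each $i$ the elementary inequality $x+x^{-1}\geq 2$ for $x>0$, applied with $x=\nu_i/\nu_{i+1}$, gives $\frac{\nu_i}{\nu_{i+1}}+\frac{\nu_{i+1}}{\nu_i}\geq 2$, with equality iff $\nu_i=\nu_{i+1}$. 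Summing over $i=1,\dots,r$ yields $\sum_{i=1}^r\bigl(\frac{\nu_i}{\nu_{i+1}}+\frac{\nu_{i+1}}{\nu_i}\bigr)\geq 2r$, hence $\widehat{\mu}_P\geq c_1^2(Y)$; this holds for every toric resolution $\rho:Y\to X_{\Delta_P}$, which is consistent with Corollary~\ref{coro:BLWindepedantRho} asserting that the left-hand side $\widehat{\mu}_P$ does not depend on $Y$ at all.

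There is essentially no obstacle here: the statement is a repackaging of Theorem~\ref{theo:VarianceSpectreGeometrique}, Lemma~\ref{lemma:SpecGeoBoxDimn}, and Proposition~\ref{prop:MuPDim2}, combined with the one-variable inequality $x+x^{-1}\geq 2$. The only verification that deserves explicit mention is the positivity of the $\nu_i$, which guarantees that the $\frac{\nu_i}{\nu_{i+1}}$ are positive and hence that AM--GM applies; everything else is formal substitution.
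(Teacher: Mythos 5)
Your proof is correct and follows essentially the same route as the paper: the equality is the $n=2$ specialization of Theorem \ref{theo:VarianceSpectreGeometrique} (with $e_P=\mu_P$ from Lemma \ref{lemma:SpecGeoBoxDimn}), and the inequality $\widehat{\mu}_P\geq c_1^2(Y)$ comes from the first formula of Proposition \ref{prop:MuPDim2} together with $\nu+\nu^{-1}\geq 2$ for $\nu>0$. Your added remark that $\Delta_P$ is automatically simplicial in dimension two is a sensible detail the paper leaves implicit, but it does not change the argument.
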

\begin{proof} The equality follows from Theorem \ref{theo:VarianceSpectreGeometrique}.
By Proposition \ref{prop:MuPDim2} we have $\widehat{\mu}_P\geq c_1 ^2 (Y)$ because $\nu +\frac{1}{\nu}\geq 2$ for all real positive numbers $\nu$. 
\end{proof}

\subsection{Examples}
\label{ex:VarianceEPP}

\subsubsection{Weighted projective spaces}
We test Theorem \ref{theo:VarianceMiroirToriqueChampetre} on the weighted projective spaces
considered in Example \ref{ex:SpectreGeoEPP}. 
Let $P$ be the polytope of the weighted projective space $X=\ppit (1,\lambda_1 ,\cdots ,\lambda_n )$ and let $f(\underline{u})=\sum_{i=0}^n \underline{u}^{b_i}$ where the $b_i$'s are defined as in Section \ref{sec:EPP}. 
We will denote by $\sum_{i=1}^{\mu} z^{\alpha_{i}}$ the spectrum at infinity of $f$
and we will write $V(\alpha ):= \sum_{i=1}^{\mu}(\alpha_i -\frac{n}{2})^2$. 

\begin{itemize}
\item The polytope $P$ is Fano (see Section \ref{sec:VarPolFano}):
\begin{center}

\begin{tabular}{|c|c|c|c|c|} \hline\hline
$X$                &  $\mu$        &  $V(\alpha )$                         & $\mu n/12$      & $ \widehat{\mu}_P$  \\ \hline\hline
$\ppit (1,1,a)$        & $a+2$         &  $(2a^2 +6a+4)/6a$                    & $(a+2)/6$       &  $\frac{(a+2)^2}{a}$ \\ \hline
$\ppit (1,2,5)$        & $8$           & $12/5$                                & $4/3$           &  $32/5$\\ \hline

\end{tabular}

\end{center}

\noindent For $\ppit (1,1,a)$,
 the polytope $P$ is the convex hull of $(1,0)$, $(0,1)$ and
$(-1 ,-a )$ and we consider the resolution obtained by adding 
the ray generated by $(0,-1)$. 
We use Proposition \ref{prop:MuPDim2} in order to compute $\widehat{\mu}_P$, with 
$\nu_{1}=1$, $\nu_2 =\frac{2}{a}$, $\nu_{3}=1$, $\nu_{4}=1$ and $c_1^2 (Y)=8$. 

\noindent For $\ppit (1,2,5)$, 
 the polytope $P$ is the convex hull of $(1,0)$, $(0,1)$ and
$(-2 ,-5 )$ and  
we consider the resolution obtained by adding 
the rays generated by $(0,-1)$, $(-1,-3)$
and $(-1,-2)$. We use Proposition \ref{prop:MuPDim2} in order to compute $\widehat{\mu}_P$, with $\nu_1 =1$, $\nu_{2}=\frac{3}{5}$, $\nu_{3}=\frac{4}{5}$, $\nu_{4}=1$, $\nu_{5}=1$, $\nu_{6}=1$ and $c_1^2 (Y)=6$. 

\noindent Notice that in these examples we have
$\widehat{\mu}_P=\mu_{P^{\circ}}$ where $\mu_{P^{\circ}}$ is the volume of the polar polytope: this is not a coincidence, see Section 
\ref{sec:NoetherFanoPolytope} below.

\item The polytope $P$ is not Fano:

\begin{center}

\begin{tabular}{|c|c|c|c|c|} \hline\hline
$X$               &  $\mu$        &  $V(\alpha )$                         & $\mu n/12$      & $\widehat{\mu}_P$  \\ \hline\hline
$\ppit (1,\ell ,\ell)$ & $1+2\ell$     & $2+\frac{(\ell -1)(2\ell -1)}{3\ell}$ & $(2\ell +1)/6$  & $9+2\frac{(\ell -1)^2}{\ell}$\\ \hline
$\ppit (1,2,2,2)$      & $7$           & $7$                                   & $7/4$   & $63/2$\\ \hline

\end{tabular}

\end{center}

\vspace{5mm}

\noindent For $\ppit (1,\ell , \ell)$, $\ell\geq 2$,  the polytope $P$ is the convex hull of $(1,0)$, $(0,1)$ and
$(-\ell ,-\ell )$. 
Formula (\ref{eq:ClasseBatyrevChampetre}) gives 
\begin{equation}\nonumber
\widehat{\mu}_P =c_1 (\ppit^2) c_{1} (\ppit^2 )
+c_1 (\ppit^1 ) (\ell -1) 
-\frac{1}{\ell} c_{1}(\ppit^1 ) (\ell -1).
\end{equation}
Notice that $\widehat{\mu}_P\neq \mu_{P^{\circ}}$.

\noindent For $\ppit (1, 2 , 2 , 2)$, $P$ is the convex hull of $(-2, -2, -2)$, $(1, 0, 0)$, $(0,1,0)$ and 
$(0,0,1)$. 
Formula (\ref{eq:ClasseBatyrevChampetre}) gives
\begin{equation}\nonumber
\widehat{\mu}_P =c_1 (\ppit^3) c_{2} (\ppit^3 )
+c_1 (\ppit^2 ) c_1 (\ppit^2 ) 
-\frac{1}{2} c_{2}(\ppit^2 ). 
\end{equation}
\end{itemize}

\subsubsection{Miscellaneous}
 In order to complete the panaorama, let us now consider somewhat different situations:
\begin{itemize} 
\item let $P_{1,2,2}$ be the polytope with vertices $b_1 =(1,0)$, $b_2 =(0,2)$ and $b_3 =(-2 ,-2)$. 
\item Let $P_{\ell ,\ell ,\ell }$ be the polytope with vertices $b_1 =(\ell ,0)$, $b_2 =(0,\ell )$ and $b_3 =(-\ell ,-\ell )$
where $\ell$ is a positive integer. 
\end{itemize}

\noindent We have the following table:

\begin{center}

\begin{tabular}{|c|c|c|c|c|} \hline\hline
              &  $\mu$        &  $V(\alpha )$                         & $\mu n/12$      & $ \widehat{\mu}_P$  \\ \hline\hline
$P_ {1,2,2}$        & $8$           &  $3$                               & $4/3$           &  $10$ \\ \hline
$P_{\ell ,\ell ,\ell }$        & $3\ell^2$         &  $(\ell^2 +3)/2$                    & $\ell^2 /2$       &  $9$ \\ \hline
\end{tabular}

\end{center}

\noindent This agrees with formula (\ref{eq:VarianceMiroirToriqueChampetre})

\section{A Noether's formula for two dimensional Fano polytopes}

\label{sec:NoetherFanoPolytope}

In this section, we still focus on the two dimensional case: $P$ is full dimensional polytope in $N_{\rit}=\rit^2$.

Assume that $P$ is a reflexive polytope. Then $X_{\Delta_P}$ has a crepant resolution and 
$\widehat{\mu}_P=c_1^2 (Y)$ by Remark \ref{remark:DiversCstP}. The anticanonical divisor of $Y$ is nef 
and $c_1^2 (Y)=\mu_{P^{\circ}}$ by \cite[Theorem 13.4.3]{CLS}, so that finally
$\widehat{\mu}_P=\mu_{P^{\circ}}$. By Corollary \ref{coro:SpecGeoReflexif}, the geometric spectrum of $P$ 
satisfies 
$\sum_{i=1}^{\mu}(\beta_i -1)^2 =2$ 
and we thus get from Corollary \ref{coro:VarianceDim2Vraie} the well-known formula
 \begin{equation}\label{eq:Noetherformula}
12=\mu_P +\mu_{P^{\circ}}
\end{equation}
for a reflexive polytope $P$.

We have the following generalization of equation (\ref{eq:Noetherformula}) for the Fano polytopes 
defined in Section \ref{sec:VarPolFano} (a reflexive polytope is Fano):

\begin{theorem}\label{theo:VarianceMuPolytopePolaire}
Assume that $P$ is a Fano polytope in $\rit^2$ and let $\Spec_P^{\geo} (z)=\sum_{i=1}^{\mu_P} z^{\beta_i}$ be its geometric spectrum. Then
\begin{equation}\label{eq:VarianceMuPolytopePolaireEq}
\sum_{i=1}^{\mu_P}(\beta_i -1)^2 =\frac{\mu_P}{6} +\frac{\mu_{P^{\circ}}}{6} 
\end{equation}
where $P^{\circ}$ is the polar polytope of $P$.
\end{theorem}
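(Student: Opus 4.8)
The plan is to reduce everything to Corollary \ref{coro:VarianceDim2Vraie}, which already provides $\sum_{i=1}^{\mu_P}(\beta_i-1)^2 = \frac{\mu_P}{6}+\frac{\widehat{\mu}_P}{6}$ for an arbitrary two dimensional polytope; so the content of the theorem is the equality $\widehat{\mu}_P=\mu_{P^{\circ}}$ under the Fano hypothesis. Fix a toric resolution $\rho:Y\rightarrow X:=X_{\Delta_P}$ as in section \ref{sec:InterpretationResolutionSing} (the choice is irrelevant by Corollary \ref{coro:BLWindepedantRho}), with rays generated by the primitive vectors $v_j$, corresponding divisors $D_j$, and $\nu_j=\nu(v_j)$. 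By Proposition \ref{prop:MuPDim2} we may write $\widehat{\mu}_P = A\cdot B$ on $Y$, where $A:=\sum_j\nu_j D_j$ and $B:=\sum_j\nu_j^{-1}D_j$.

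The two observations driving the proof are $A=\rho^{*}(-K_X)$ and $\rho_{*}B=-K_X$. For the first, $A$ is by construction the torus-invariant $\qit$-divisor on $Y$ whose support function takes the value $-\nu_j=-\nu(v_j)$ at each $v_j$; since the Newton function $\nu$ is linear on every cone of $\Delta_P$, hence on every cone of the refinement defining $Y$, this support function equals $-\nu$, which is exactly the support function of the anticanonical divisor $-K_X$ (the reduced sum of the torus-invariant prime divisors of $X$, whose support function takes the value $-1=-\nu(b_i)$ at each primitive generator $b_i$ of $\Delta_P$). For the second, every ray of $Y$ that is not a ray of $\Delta_P$ lies in the relative interior of a two dimensional cone of $\Delta_P$, so the corresponding $D_j$ is contracted by $\rho$ to a torus fixed point and $\rho_{*}D_j=0$; every ray of $Y$ that is a ray of $\Delta_P$ is generated by a vertex $b_i$ of $P$, which by the Fano hypothesis is a \emph{primitive} lattice point of $\partial P$, so that $\nu_j=\nu(b_i)=1$ and the coefficient $\nu_j^{-1}$ equals $1$. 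Hence $\rho_{*}B$ is the reduced sum of the torus-invariant prime divisors of $X$, that is $-K_X$.

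Combining these with the projection formula gives
\[
\widehat{\mu}_P = A\cdot B = \rho^{*}(-K_X)\cdot B = (-K_X)\cdot\rho_{*}B = (-K_X)^2 .
\]
Finally, since $P$ is a Fano polytope, $-K_X$ is $\qit$-Cartier and nef, with associated polytope $P_{-K_X}=\{m\in M_{\rit}\, :\, \langle m,v\rangle\geq -1\ \mbox{for all}\ v\in P\}=-P^{\circ}$; by the volume formula for nef divisors on complete toric surfaces (\cite[Theorem 13.4.3]{CLS}) we get $(-K_X)^2=2\,\vol(P_{-K_X})=2\,\vol(P^{\circ})=\mu_{P^{\circ}}$. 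Together with Corollary \ref{coro:VarianceDim2Vraie} this proves (\ref{eq:VarianceMuPolytopePolaireEq}).

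The step needing the most care is the pair of identifications $A=\rho^{*}(-K_X)$ and $\rho_{*}B=-K_X$: one must be consistent about the sign convention relating a torus-invariant divisor to its support function, and it is precisely here that the Fano hypothesis enters — the primitivity of the vertices of $P$ is what forces $\nu$ to take the value $1$ at every ray generator of $\Delta_P$, which is what makes the ``interior'' factor $B$ push forward to $-K_X$ rather than to a larger divisor. (The restriction to dimension two is also convenient, since in higher dimension the closed product formula for $\widehat{\mu}_P$ of Proposition \ref{prop:MuPDim2} is no longer available.)
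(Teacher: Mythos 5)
Your proof is correct, and every ingredient you use is available in the paper (Proposition \ref{prop:MuPDim2}, Corollary \ref{coro:VarianceDim2Vraie}, Corollary \ref{coro:BLWindepedantRho}, and the toric volume formula of \cite[Theorem 13.4.3]{CLS}). The overall skeleton coincides with the paper's: both reduce the theorem to the identity $\widehat{\mu}_P=(-K_X)^2=\mu_{P^{\circ}}$, and both start from the same support-function identification $\rho^{*}(-K_X)=\sum_j \nu_j D_j$, which is the first place the Fano hypothesis (primitivity of the vertices, so $\nu(b_i)=1$) enters. Where you genuinely diverge is in the proof of the central identity: the paper establishes $\widehat{\mu}_P=\bigl(\rho^{*}(-K_X)\bigr)^2$ by expanding both sides via Proposition \ref{prop:MuPDim2} and verifying the resulting relation $\sum_i(\nu_i^2-1)D_i^2=-\sum_i(\nu_i^2-1)(\frac{\nu_{i-1}}{\nu_i}+\frac{\nu_{i+1}}{\nu_i})$ termwise, using the wall relation $v_{i-1}+v_{i+1}=(\frac{\nu_{i-1}}{\nu_i}+\frac{\nu_{i+1}}{\nu_i})v_i$ when three consecutive rays lie in a common cone of $\Delta_P$ and the Fano condition $\nu_i=1$ otherwise, and then drops the $\rho^{*}$ by \cite[Lemma 13.4.2]{CLS}; you instead exploit the asymmetric factorization $\widehat{\mu}_P=(\sum_j\nu_jD_j)\cdot(\sum_j\nu_j^{-1}D_j)$, note that Fano also forces $\rho_{*}(\sum_j\nu_j^{-1}D_j)=-K_X$ (exceptional rays push forward to zero, rays of $\Delta_P$ have $\nu_j=1$), and conclude by the projection formula. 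Your route is cleaner: it avoids self-intersection numbers and wall relations altogether and isolates the two uses of primitivity, while the paper's termwise computation makes visible exactly which local identity fails without the Fano hypothesis (which is what lies behind the remark that the formula breaks for the polytope of $\ppit(1,\ell,\ell)$). If you polish the write-up, spell out two small points you currently assert in passing: the projection formula is applied to a $\qit$-Cartier pullback against a $\qit$-one-cycle (harmless after clearing denominators), and the nefness of $-K_X$ needed for \cite[Theorem 13.4.3]{CLS} holds precisely because, under the Fano hypothesis, its support function is $-\nu$, the minimum of the linear forms $-u_F$ agreeing with it on the maximal cones of $\Delta_P$.
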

\begin{proof} 
Notice first that, because of the Fano assumption, the support function of the $\qit$-Cartier divisor $K_X$ is equal to the Newton function 
of $P$ and
thus $\rho^* (-K_X)=\sum_{i=1}^r \nu_i D_i $
since $\rho^* (-K_X)$ and $-K_X$ have the same support function. 
We shall show that 
\begin{equation}\label{eq:RhoChapeauEgalRho}
 \widehat{\mu}_P =\rho^* (-K_X)\rho^* (-K_X)
\end{equation}
Because $(\rho^* (-K_X))^2 =(-K_X)^2 =\mu_{P^{\circ}}$ (for the first equality see \cite[Lemma 13.4.2]{CLS} and for the second one see the $\qit$-Cartier
version of \cite[Theorem 13.4.3]{CLS}), equation (\ref{eq:VarianceMuPolytopePolaireEq}) will follow from 
Theorem \ref{theo:VarianceSpectreGeometrique}.
 By Proposition \ref{prop:MuPDim2}, we have 
\begin{equation}\nonumber
\widehat{\mu}_P =\sum_{i=1}^r (D_i^2 +\frac{\nu_i}{\nu_{i+1}} 
+\frac{\nu_{i+1}}{\nu_{i}})=\sum_{i=1}^r D_i^2 +\sum_{i=1}^r (\frac{\nu_{i-1}}{\nu_{i}} 
+\frac{\nu_{i+1}}{\nu_{i}})
\end{equation}
and, as noticed above, 
\begin{equation}\nonumber
 \rho^* (-K_X)\rho^* (-K_X)=(\sum_{i=1}^r \nu_{i} D_i  )^2 
 =\sum_{i=1}^r \nu_{i}^2 D_i^2 +\sum_{i=1}^r (\nu_i \nu_{i+1} +\nu_i \nu_{i-1})
\end{equation}
so (\ref{eq:RhoChapeauEgalRho}) reads
\begin{equation}\label{eq:RhoChapeauEgalRhoInterm}
\sum_{i=1}^r (\nu_{i}^2 -1)D_i^2 =
\sum_{i=1}^r (\nu_{i}^2 -1)(-\frac{\nu_{i-1}}{\nu_{i}} -\frac{\nu_{i+1}}{\nu_{i}}).
\end{equation}
Notice the following:
\begin{itemize}
 \item if $v_{i-1}$, $v_{i}$ and $v_{i+1}$ are primitive generators of rays of $Y$ inside the same cone of the fan of $X$, we have
 \begin{equation}\nonumber
\nu (v_{i-1}+v_{i+1})=(\frac{\nu_{i-1}}{\nu_{i}} +\frac{\nu_{i+1}}{\nu_{i}})\nu (v_{i})
\end{equation} 
  because $\nu (v_{i-1})=\nu_{i-1}$, $\nu (v_{i})=\nu_{i}$ and $\nu (v_{i+1})=\nu_{i+1}$ and the Newton function is linear on 
  each cone of the fan of $X$. 
  Because $Y$ is smooth and complete, it follows that
 \begin{equation}\nonumber
 v_{i-1} +v_{i+1}=(\frac{\nu_{i-1}}{\nu_{i}} +\frac{\nu_{i+1}}{\nu_{i}})v_i
\end{equation}
 and we get $D_{i}^{2}=-\frac{\nu_{i-1}}{\nu_{i}} -\frac{\nu_{i+1}}{\nu_{i}}$.
 \item Otherwise, $\nu_{i}=1$ 
 due to the Fano condition.
\end{itemize}
Equation (\ref{eq:RhoChapeauEgalRhoInterm}), hence equation (\ref{eq:RhoChapeauEgalRho}), follows from these two observations. 
\end{proof}

\begin{example}\label{ex:EPP}
Let $P$ be the convex hull of $(1,0)$, $(-\lambda_1 ,-\lambda_2 )$ and $(0,1)$ 
 where $\lambda_{1}$ and $\lambda_{2}$ are relatively prime integers. Then
$$\sum_{i=1}^{\mu_P}(\beta_i -1)^2 =\frac{1}{6}[(1+\lambda_{1}+\lambda_{2})+\frac{(1+\lambda_{1}+\lambda_{2})^2}{\lambda_{1}\lambda_{2}}]$$
Notice that $P$ is the polytope of the weighted projective space $\ppit (1,\lambda_{1} ,\lambda_{2})$.
\end{example}

\begin{remark}
Theorem \ref{theo:VarianceMuPolytopePolaire} is not true if we drop the Fano assumption: for instance, if $P$ is the polytope of $\ppit (1,2,2)$, we have $\widehat{\mu}_P =10$  
and $\mu_{P^{\circ}}=\frac{25}{4}$. 
Also, it follows from Proposition \ref{prop:MuPDim2} that
$\widehat{\mu}_{\ell P}=\widehat{\mu}_{P}$ if $\ell$ is an integer greater or equal than one, and thus $\widehat{\mu}_{P}$ can't be seen 
as a volume in general. 
\end{remark}

Finally, we get the following statement for singularities:

\begin{corollary}\label{coro:ConjFanoPolytope}
Let $f$ be a nondegenerate and convenient Laurent polynomial on $(\cit^* )^2$ with 
spectrum at infinity $\alpha_1 , \cdots , \alpha_{\mu}$.
Assume that the Newton polytope $P$ of $f$  is Fano. Then
\begin{equation}\label{eq:VarianceMuPolytopePolaireSing}
\sum_{i=1}^{\mu}(\alpha_i -1)^2 =\frac{\mu_P}{6} +\frac{\mu_{P^{\circ}}}{6} 
\end{equation}
where $P^{\circ}$ is the polar polytope of $P$.
In particular, $\frac{1}{\mu}\sum_{i=1}^{\mu}(\alpha_i -1)^2 \geq \frac{1}{6}$.\qed
\end{corollary}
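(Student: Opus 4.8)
The plan is to reduce the statement to Theorem \ref{theo:VarianceMuPolytopePolaire} by chaining together the identifications of spectra established earlier. First I would observe that, since $f$ is convenient and nondegenerate, its Newton polytope $P$ contains the origin in its interior, and in dimension two $P$ is automatically simplicial: the cone $\Delta_P$ over an edge of $P$ (an edge cannot pass through the origin) is generated by the two vertices of that edge, which are linearly independent in $\rit^2$. Hence $Spec_P^{alg}$, $Spec_P^{geo}$ and the stacky fan $\mathbf{\Delta}$ of $P$ are all defined, and no simpliciality hypothesis beyond convenience and nondegeneracy is needed.

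Next I would recall that the spectrum at infinity of a convenient and nondegenerate Laurent polynomial depends only on its Newton polytope: by \cite{NS} we have $Spec_f(z) = Spec_P^{alg}(z)$, and by Corollary \ref{coro:SpecGeoegalSpecAalg} this equals $Spec_P^{geo}(z)$. In particular the global Milnor number satisfies $\mu = \mu_P$ (which is also Kouchnirenko's equality \cite{K}), and, writing $Spec_P^{geo}(z) = \sum_{i=1}^{\mu_P} z^{\beta_i}$, the multiset $\{\alpha_1,\dots,\alpha_\mu\}$ coincides with $\{\beta_1,\dots,\beta_{\mu_P}\}$. Now the Fano hypothesis on $P$ lets me apply Theorem \ref{theo:VarianceMuPolytopePolaire}, which gives $\sum_{i=1}^{\mu_P}(\beta_i - 1)^2 = \frac{\mu_P}{6} + \frac{\mu_{P^{\circ}}}{6}$; together with $\{\alpha_i\} = \{\beta_i\}$ this is exactly formula (\ref{eq:VarianceMuPolytopePolaireSing}).

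For the concluding inequality I would note that $P^{\circ}$ is bounded, since $0$ lies in the interior of $P$, so it is a full dimensional polytope and $\mu_{P^{\circ}} = 2\,\vol(P^{\circ}) > 0$. Dividing (\ref{eq:VarianceMuPolytopePolaireSing}) by $\mu = \mu_P$ then yields $\frac{1}{\mu}\sum_{i=1}^{\mu}(\alpha_i - 1)^2 = \frac{1}{6} + \frac{\mu_{P^{\circ}}}{6\mu_P} \geq \frac{1}{6}$.

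As for the main difficulty: there is essentially none internal to this corollary — all the substance is carried by Theorem \ref{theo:VarianceMuPolytopePolaire} and by the chain of isomorphisms of graded rings underlying Corollary \ref{coro:SpecGeoegalSpecAalg}. The one point deserving a line of care is the legitimacy of passing from $f$ to its Newton polytope, i.e. that $Spec_f$ depends only on $P$; this is exactly what convenience and nondegeneracy, together with \cite{NS}, provide, so that the corollary follows formally.
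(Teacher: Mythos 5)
Your argument is correct and follows the same route the paper intends: the corollary is stated with no separate proof precisely because it is the immediate combination of Theorem \ref{theo:VarianceMuPolytopePolaire} with the chain $Spec_f = Spec_P^{alg} = Spec_P^{geo}$ (via \cite{NS}, Kouchnirenko's equality $\mu=\mu_P$ and Corollary \ref{coro:SpecGeoegalSpecAalg}), exactly as you do. Your remarks that a two dimensional polytope with the origin in its interior is automatically simplicial and that $\mu_{P^{\circ}}>0$ correctly handle the only points needing verification.
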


\section{Hertling's conjecture for regular functions}

\label{sec:Conj}

From Theorem \ref{theo:VarianceMiroirToriqueChampetre} we get:

\begin{proposition}
Let $f$ be a convenient and nondegenerate Laurent polynomial and let $P$ be 
its Newton polytope (assumed to be simplicial). 
Assume that $\widehat{\mu}_P\geq 0$. 
Then
\begin{equation}\label{eq:VarianceSpectreInfini}
\frac{1}{\mu}\sum_{i=1}^{\mu}(\alpha_i -\frac{n}{2})^2 \geq \frac{n}{12}
\end{equation}
were $\mu$ is the global Milnor number of $f$ and $\Spec_{f} (z)=\sum_{i=1}^{\mu}z^{\alpha_i}$ is its spectrum at infinity.\qed
\end{proposition}

\noindent Corollary \ref{coro:VarianceDim2Vraie}, examples of Section \ref{ex:VarianceEPP} and Corollary \ref{coro:ConjFanoPolytope} give some cases for which $\widehat{\mu}_P\geq 0$ and we expect that it is a general rule. 
Notice that, if true, inequality (\ref{eq:VarianceSpectreInfini}) is the best possible: 
for instance, in the situation of Example \ref{ex:EPP}, we have
$$\frac{1}{\mu}\sum_{i=1}^{\mu}(\alpha_i -1)^2 =\frac{1}{6} +\frac{(1+\lambda_{1}+\lambda_{2})}{6\lambda_{1}\lambda_{2}}$$
and the last term on the right can be as small as possible.

Equation (\ref{eq:VarianceSpectreInfini}) motivates
the following conjecture (by Proposition \ref{prop:SpectreAlg01}, we have $\alpha_1 =0$ and $\alpha_{\mu}=n$ if $f$ is a convenient and nondegenerate Laurent polynomial) which has been already stated without any further comments in 
\cite[Remark 4.15]{DoSa1} as a global counterpart of C. Hertling's conjecture for germs of holomorphic functions 
(see \cite{Her}, where the equality is inversed). The tameness assumption is discussed in Section \ref{sec:K}.\\

\noindent {\bf Conjecture on the variance of the spectrum (global version)}
 {\em Let $f$ be a regular, tame function  on a smooth $n$-dimensional affine variety $U$. 
 Then 
\begin{equation}\label{eq:Conj} 
\frac{1}{\mu}\sum_{i=1}^{\mu }(\alpha_{i}-\frac{n}{2})^{2}\geq\frac{1}{12}(\alpha_{\mu }-\alpha_{1})
\end{equation}
where $\alpha_{1}\leq \cdots \leq\alpha_{\mu}$ is the ordered spectrum  of $f$ at infinity.}
\qed \\

\noindent  This is another story, but one should expect equality in formula (\ref{eq:Conj})   
if $f$ belongs to the ideal generated by its partial derivatives (this is the case for quasi-homogeneous polynomials, see \cite{Dimca} and \cite{Her}) because mirror symmetry predicts that
the multiplication by $f$ on its Jacobi ring corresponds to the cup-product by $c_{1}(X)$ on the cohomology algebra of the mirror partner $X$ of $f$.

\end{document}